\pgfplotsset{compat=1.12}
\newcommand{\eps}{\varepsilon}
\renewcommand{\phi}{\varphi}
\newcommand{\N}{\mathbb{N}}
\newcommand{\R}{\mathbb{R}}
\renewcommand{\P}{\mathbb{P}}
\newcommand{\E}{\mathbb{E}}
\newcommand{\cB}{\mathcal{B}}
\newcommand{\cD}{\mathcal{D}}
\newcommand{\cI}{\mathcal{I}}
\newcommand{\cJ}{\mathcal{J}}
\newcommand{\cP}{\mathcal{P}}
\newcommand{\cS}{\mathcal{S}}
\newcommand{\cW}{\mathcal{W}}
\newcommand{\sL}{\mathscr{L}}
\newcommand{\tomix}{\xrightarrow[]{\tau_{\mathcal M}}}
\renewcommand{\d}{{\rm d}}
\newcommand{\cpl}{{\rm Cpl}}
\newcommand{\Lip}{{\rm Lip}}
\newcommand{\Rd}{{\R^d}}
\def\Cb{{\rm C}_{\rm b}}
\def\Cci{{\rm C}_{\rm c}^\infty}
\def\Lipb{{\rm Lip}_{\rm b}}
\def\one{\mathds{1}}
\DeclareMathOperator{\mesh}{mesh}
\DeclareMathOperator*{\esssup}{ess\,sup}
\newcommand{\X}{\Rd}
\numberwithin{equation}{section}
\theoremstyle{plain}
\newtheorem{theorem}{Theorem}[section]
\newtheorem{proposition}[theorem]{Proposition}
\newtheorem{lemma}[theorem]{Lemma}
\newtheorem{corollary}[theorem]{Corollary}
\theoremstyle{definition}
\newtheorem{assumption}[theorem]{Assumption}
\newtheorem{remark}[theorem]{Remark}
\newtheorem{definition}[theorem]{Definition}
\newtheorem{setting}[theorem]{Setting}
\theoremstyle{remark}
\newtheorem{example}[theorem]{Example}
\title[Scaling limits of multi-period DRO problems]{Scaling limits of multi-period distributionally robust optimization problems}
\author{Max Nendel}
\address{Department of Statistics and Actuarial Science, University of Waterloo}
\email{mnendel@uwaterloo.ca}
\author{Ariel Neufeld}
\address{Division of Mathematical Sciences, Nanyang Technological University}
\email{ariel.neufeld@ntu.edu.sg}
\author{Kyunghyun Park}
\address{Division of Mathematical Sciences, Nanyang Technological University}
\email{kyunghyun.park@ntu.edu.sg}
\author{Alessandro Sgarabottolo}
\address{Department of Mathematics, LMU M\"unchen}
\email{sgarabottolo@math.lmu.de}
\thanks{\textit{Key words:}\ Distributionally robust optimization, nonlinear semigroup, infinitesimal generator, viscosity solution, Hamilton-Jacobi-Bellman-Isaacs equation, $g$-expectation, model uncertainty}
\thanks{\textit{MSC2020 Subject Classification:}\ Primary 90C17; 47H20; Secondary 90C30; 90C31;
60J35}
\thanks{\textit{Funding:}\ M.~Nendel is financially supported by the Natural Sciences and Engineering Research Council of
Canada via Discovery Grant no.\ RGPIN-2025-04219.\ A.~Neufeld gratefully acknowledges support by the MOE AcRF Tier 2 Grant MOE-T2EP20222-0013.\ K.~Park and A.~Sgarabottolo gratefully acknowledge support by the Presidential Postdoctoral Fellowship of Nanyang Technological University and by the National Research Foundation of Korea (Grant DOI: RS-2025-02633175).
}
\date{\today}
\begin{document}

\begin{abstract}
    We examine the scaling limit of multi-period distributionally robust optimization (DRO) problems via a semigroup approach.\ Each period involves a worst-case maximization over distributions in a Wasserstein ball around the transition probability of a reference process with radius proportional to the length of the period, and the multi-period DRO problem arises through its sequential composition. We show that the scaling limit of the multi-period DRO, as the length of each period tends to zero, is a strongly continuous monotone semigroup on $\mathrm{C_b}$.\ Furthermore, we show that its infinitesimal generator is equal to the generator associated with the non-robust scaling limit plus an additional perturbation term induced by the Wasserstein uncertainty.\ As an application, we show that when the reference process follows an It\^o process, the viscosity solution of the associated nonlinear PDE coincides with the value of continuous-time robust optimization problems under parametric uncertainty.
\end{abstract}

\maketitle

\section{Introduction}\label{sec:intro}

When implementing an optimization method based on a stochastic model, it is likely that a margin for potential inaccuracies in the corresponding probabilities or parameters exists.\ In the economic literature this phenomenon is often referred to as Knightian uncertainty, see, e.g., \cite{chen2002ambiguity,dow1992uncertainty,gilboa1989maxmin,garlappi2007portfolio}, or model misspecification, see, e.g., \cite{hansen2022, cerreia2025}. Robust optimization, which presents a worst-case approach, accommodates this issue by solving min-max or max-min problems defined over two sets: the set of admissible controls (to be optimized) and the set of plausible probability distributions (representing model uncertainty).\ In this context, there exists a wide range of mathematical formulations, which can be broadly classified depending on whether the uncertainty stems from probabilistic imprecision, i.e., distributional uncertainty, see, e.g., \cite{bayraktar2023nonparametric,blanchard2018multiple-priors,blanchet2019quantifying,gao2023distributionally,mohajerin2018data,blanchet2022distributionally,obloj2021distributionally,neufeld2023markov,yue2022linear}, or from model parameters, see, e.g., \cite{coquet2002filtration,peng1997backward,denis2011function,nutz2013random,peng2007g,peng2008multi,soner2011quasi,neufeld2017nonlinear,neufeld2014measurability,liu2019compactness,neufeld2018robust}.

A primary interest across these frameworks is to obtain a dynamic equation---typically a partial differential equation\;(PDE)---that characterizes the value function of the robust problem and allows the computation of optimal strategies.
In this context, the dynamic programming principle or semigroup property plays a central role.\
The recent works \cite{denk2018kolmnonlinexpect,nendel2021wasserstein,denk2020semigroup,nendel2021upper,criens2025stochastic,criens2025representation} have extended 
classical semigroup theory to dynamic optimization problems, see also \cite[Chapter II]{fleming2006controlled} for a broad discussion on the use of semigroups in the context of viscosity solutions and optimal control.\ Moreover, in a series of works \cite{BDKN22,BK23,BK22,BKN23}, Chernoff-type approximation results for nonlinear expectations have been developed, which allow to recover dynamic consistency starting from families of operators that need not be dynamically consistent.
The key idea of a Chernoff approximation is to start from a family of one-step operators, e.g., $(I(t))_{t \ge 0}$ with $I(t) \colon \Cb \to \Cb$, which satisfy suitable properties, and show that these properties are preserved under iterations of the type
\[
I(t_1-t_0)\circ \cdots \circ I(t_k - t_{k-1}), \quad \text{for} \quad 0 = t_0  < \dots < t_k=t,
\]
where $\{t_0, \dots, t_k\}$ is a partition of a finite time interval $[0,t]$.\
The dynamic consistency that is naturally obtained on the multi-period level can then be transferred to continuous time when taking the limit over increasingly finer partitions of finite time intervals. While such results are obtained under rather abstract conditions and can be applied to a variety of frameworks, they require convexity of the one-step operators $(I(t))_{t \ge 0}$. Hence, they are not suitable for addressing robust optimization problems, where, in general, the additional infimum breaks convexity. Some exceptions arise when the control enters in the one-step operators in a convenient way, preserving the convexity of the operators themselves, see, e.g., \cite{blessing2024discrete}.\ However, a direct application of the previous results in a general robust optimization framework is not possible.

The papers\ \cite{bartl2021limits,nendel2021wasserstein}, which consider transition semigroups for classes of time-homogeneous Markov processes under distributional uncertainty, form the starting point of this article.\
In these works, the authors study a sequential composition of nonlinear expectation operators under distributional uncertainty, where the nonlinear expectation operators are modeled as a perturbation of the transition probabilities of the Markov process within a proximity in Wasserstein distance.  In this setting, it is shown that, 
as the time horizon is divided into progressively finer periods with a proportional scaling of the level of uncertainty,
the sequential composition converges to a semigroup in the mixed topology, see \eqref{eq:mix_top} in Section \ref{sec:notation}.\ Furthermore, by examining the infinitesimal generator of the semigroup, it is shown that the semigroup gives rise to viscosity solutions to a nonlinear PDE.\

    In this article, we extend the results of \cite{nendel2021wasserstein} by proposing a semigroup approach for a class of distributionally robust optimization (DRO) problems, thus providing an analytic bridge between Wasserstein distributionally robust optimization problems, see, e.g., \cite{blanchet2019quantifying,gao2023distributionally,mohajerin2018data,blanchet2022distributionally,obloj2021distributionally,neufeld2023markov,yue2022linear}, and continuous-time robust optimization problems, see, e.g., \cite{hernandez2007control,schied2006risk, maccheroni2006ambiguity,schied2007optimal,talay2002worst,park2022robust,park2023robust}.\ While certain proof techniques in this article resemble those used in \cite{bartl2021limits,nendel2021wasserstein} for nonlinear expectations involving merely a supremum (or infimum) over the uncertainty set, the min-max structure of the robust optimization introduces major technical challenges.\ At the core of our analysis lies an explicit description of the nonlinear infinitesimal generator, which is closely tied to the sensitivity of DRO problems, a topic that has recently gained a significant amount of attention.\ We refer to \cite{bartl2021sensitivity,gao2024wasserstein,gotoh2025distributionally,nendel2022parametric} for sensitivity analyses for Wasserstein distributionally robust optimization problems in one period, \cite{jiang2024sensitivity,sauldubois2024first,bartl2023sensitivity1} in a multi-period setting, and \cite{herrmann2017model,herrmann2017hedging,bartl2023sensitivity,bartl2024numerical,jiang2024sensitivity, touzi2025sensitivity} in continuous time.\ The works \cite{bartl2023sensitivity,bartl2024numerical} investigate the sensitivity of robust optimization problems under drift and volatility uncertainty, while \cite{touzi2025sensitivity} studies local sensitivity in non-Markovian DRO control and stopping problems under drift perturbations. In the latter work, explicit first-order formulas are derived via BSDEs and reflected BSDEs. These contributions are complementary to the present paper in that they analyze infinitesimal robustness of already formulated continuous-time robust optimization problems, whereas we start from discrete-time multi-period Wasserstein DRO problems and identify their continuous-time scaling limit through a nonlinear semigroup approach.\ The paper \cite{jiang2024sensitivity} studies sensitivity for continuous-time causal distributionally robust optimization problems.\ The sensitivities therein are expressed using optional projections of Malliavin derivatives, and are obtained as limits of their discrete-time counterparts.

Multi-period DRO arises in a wide range of fields.\ We refer, e.g., to \cite{neufeld2023markov,neufeld2026nonconcave,blanchard2018multiple,nutz2016utility,bayraktar2023nonparametric,bartl2019exponential} for robust utility maximization in finance, to \cite{yu2022multistage,pichler2021mathematical,gao2024data} for robust stochastic programming in optimization, and to \cite{yang2023distributionally,shin2022distributionally,xin2021time} for robust inventory and logistics planning in engineering and management. 

Multi-period DRO formulations are typically posed on a fixed discrete time grid, but it is natural to ask what happens as the time discretization is refined and the length of each period tends to zero.\ The scaling limit of multi-period DRO problems derived in this paper establishes a connection between discrete-time multi-period DRO problems and continuous-time robust optimization models, thereby clarifying the continuous-time robust dynamics that emerge from the accumulation of distributional uncertainty over progressively shorter time horizons.

We proceed with a rigorous description of our results.\ We consider semigroups on the space $\Cb$ of all bounded continuous functions $\X\to \mathbb{R}$.\ For $p\geq 1$, let $\cP_p$ be the set of all probability measures on the Borel $\sigma$-algebra $\cB(\X)$ with finite moment of order $p$.\ Throughout, we fix a nonempty set of actions $A$. For each action $a\in A$, let
$(\mu_t^a)_{t\ge 0} \subset \cP_p$ be a family of probability measures and let $(\psi_t^a)_{t\ge 0}$ be a family of continuous maps $\X \to \X$. For any  $a \in A$ and $t\geq 0$, we define a linear operator $T^a(t)$ on $\Cb$ by setting
\begin{align}\label{eq:Intro1}
    \begin{aligned}
        \big( T^a(t)f\big)(x)
        :=
        \int_\X f(\psi_t^a(x) + y_t)\,\mu_t^a(\d y_t) \quad\text{for all } x\in \X \text{ and } f\in \Cb.
    \end{aligned}
\end{align}
Moreover, we assume that, for each $a\in A$, the family $(T^a(t))_{t\geq 0}$ defined in\;\eqref{eq:Intro1} is the transition semigroup of a time-homogeneous controlled Markov process, and we refer to it as the \textit{reference semigroup} associated with the action $a$. Natural examples include Brownian motions with drift and Ornstein-Uhlenbeck processes, see Example~\ref{lem:examples}.
    
To account for distributional uncertainty in the reference measure $\mu^a$, we consider the set 
\begin{align}\label{eq:Intro2}
    \mathcal{B}_t^a(m):=\{\nu\in \cP_p\,|\,\cW_p(\mu_t^a, \nu) \le t m \}\quad \text{for all }t\geq 0\text{ and } a\in A,
\end{align}
where $\cW_p$ denotes the $p$-Wasserstein distance and $m\geq 0$ can be seen as a confidence parameter describing the degree of uncertainty.

We then define a family of one-period operators $(I(t))_{t\geq 0}$ on $\Cb$ by 
\begin{align}\label{eq:Intro3}
    (I(t)f)(x):= \inf_{a\in A} \sup_{\nu \in \mathcal{B}^a_t(m)}\int_\X f(\psi_t^a(x)+z)\nu(dz)\quad \text{for all }t\geq 0,\; f\in \Cb,\text{ and } x\in \X.
\end{align}
Thus, $(I(t)f)(x)$ represents the value of a distributionally robust optimization problem for the uncertainty set $\mathcal{B}_t^a(m)$ in \eqref{eq:Intro2}.

To obtain a dynamically consistent family from $(I(t))_{t\geq 0}$, we consider the set of partitions $\operatorname{P}_t:=\{\{t_0, \dots, t_k\}\,|\, 0=t_0 < \cdots < t_k = t,\;k\in\mathbb{N}\}$ for $t\geq 0$. We then consider the sequential composition operator $\mathcal{I}(\pi)$ over a partition $\pi=\{t_0,\ldots,t_k\}\in\operatorname{P}_t$ for $t\geq 0$, given by 
\begin{align}\label{eq:Intro4}
    \big(\mathcal{I}(\pi)f\big)(x) := \big(I(t_1 - t_0) \cdots I(t_k - t_{k-1}) f\big)(x)\quad \text{for all } f\in \Cb \text{ and } x\in \X,
\end{align}
which represents the multi-period version of the distributionally robust optimization problem given in \eqref{eq:Intro3}. Here and throughout, for $s,t \ge 0$, we use the notation $I(s)I(t) := I(s) \circ I(t)$ for the composition of the operators $I(s)$ and $I(t)$, and similarly for the composition of all operators in the paper.

With these notations and definitions in place, Theorem \ref{thm:semigroup} shows that if the reference components $(\mu_t^a,\psi_t^a)_{t\geq0}$ appearing in \eqref{eq:Intro1} satisfy certain integrability and regularity conditions, see Assumption \ref{ass:general}, then the family of operators $(\mathcal{S}(t))_{t\ge 0}$, defined by 
\begin{align}\label{eq:Intro5}
    (\mathcal{S}(t)f)(x) := \inf_{\pi \in {\rm P}_t} (\mathcal{I}(\pi)f)(x)\quad \text{for all }t\geq0, \; f\in \Cb,\text{ and }  x\in \X,
\end{align}
is a strongly continuous monotone semigroup on $\Cb$, see Definition \ref{def:semigroup}.\ The additional infimum over partitions of the interval $[0,t]$ together with the infimum over $a\in A$ in \eqref{eq:Intro3} allows to optimally choose both the times where one acts on the system and a corresponding action at that time.\ Proposition \ref{lem:I.on.Cb} implies that the family of operators $\big(\mathcal I(\pi)\big)_{\pi\in {\rm P}_t}$ is nonincreasing over refining partitions, so that the infimum in \eqref{eq:Intro5} can be replaced by a limit over refining, e.g., dyadic partitions.\ This result can be seen as an analogue of the scaling limit derived in \cite{nendel2021wasserstein} with an additional optimization: as the partition in $\operatorname{P}_t$ becomes increasingly finer, the family of sequential composition operators in \eqref{eq:Intro4} converges to a semigroup $(\mathcal{S}(t))_{t\geq 0}$ in the mixed topology, see Section \ref{sec:notation} for its definition.\ We point out that the mixed topology is a natural choice to obtain strong continuity of transition semigroups on $\Cb$ since the supremum norm is often too strong in order to have a densely defined generator.\ For example, the transition semigroup of an Ornstein-Uhlenbeck process is known not to generate a semigroup w.r.t.\ the uniform topology, not even when passing to the space of bounded uniformly continuous functions, while it does for the mixed topology, see \cite{kocan} and the discussion in \cite{Goldys2022mixed}.\ Moreover, there are prominent non-existence results for strongly continuous semigroups w.r.t.\ the supremum norm, cf.\ \cite{lotz} for the case where the underlying Banach space is $L^\infty$.

Furthermore, given the semigroup $(\mathcal{S}(t))_{t\geq 0}$ in \eqref{eq:Intro5}, Theorem \ref{thm:semigroup.generator} shows that, under the same assumptions on $(\mu_t^a,\psi_t^a)_{t\geq0}$, the infinitesimal generator $\mathscr{L}$ of $(\mathcal{S}(t))_{t\geq 0}$, see Definition \ref{def:semigroup_infinitesimal}, satisfies
\begin{align}\label{eq:Intro6}
    (\mathscr{L}f)(x)=\inf_{a \in A}(\mathscr{L}^a f)(x) + m \|\nabla f(x)\|\quad \text{for all }x \in \X
\end{align}
 if the function $f$ is sufficiently regular, where, for each $a \in A$, $\mathscr{L}^a$ is the infinitesimal generator of the reference semigroup $T^a$ in \eqref{eq:Intro1}. In other words, $\mathscr{L}f$ equals the Bellman operator $\inf_{a \in A}\mathscr{L}^a f$ corresponding to an optimal control problem over the action set $A$ plus an additional perturbation term $m\|\nabla f(\cdot)\|$, induced by the Wasserstein uncertainty in \eqref{eq:Intro2}.

Lastly, as an application of our main results, Theorem \ref{cor:viscosity} shows that the map $v\colon [0,\infty)\times \X \to \mathbb{R}$, defined by $v(t,x):=(\mathcal{S}(t)u_0)(x)$ for $(t,x)\in [0,\infty)\times \X$, is a viscosity solution to the PDE with initial condition $v(0,x)=u_0(x)$, see \eqref{eq:viscosity} and Definition \ref{def:D.M.viscosity}, which is  governed by the nonlinear integro-differential operator $\mathscr{L}$.\ We point out that the infinitesimal generator is of (additively separated) inf-sup-type, so that the aforementioned PDE corresponds to the Isaacs equation related to an abstract inf-sup optimal control problem.\ Theorem \ref{cor:viscosity} then allows us to show that, for specific choices of the reference components $(\mu^a,\psi^a)_{a\in A}$, the viscosity solution $v$ coincides with values of a $g$-expectation, see Corollary \ref{cor:g_exp}, or with the value function of a continuous-time robust optimization problem under drift uncertainty, see Corollary \ref{cor:SDG}.\ This resembles a collapse of nonparametric uncertainty to parametric uncertainty in the scaling limit, which has also been observed in \cite{bartl2021limits,nendel2021wasserstein} without an additional optimization.

\vspace{0.5em}
The rest of the paper is structured as follows. In Section \ref{sec:main}, we introduce the setup and state the first two main results, Theorem \ref{thm:semigroup} and Theorem \ref{thm:semigroup.generator}, concerning continuity properties, dynamic consistency, and the infinitesimal behavior of the scaling limit $\cS$ of the multi-period DRO problems.\ Section \ref{sec:apply} contains the third main result, Theorem \ref{cor:viscosity}, which shows that the semigroup~$\cS$ gives rise to viscosity solutions to a possibly nonlocal Hamilton-Jacobi-Bellman-Isaacs (HJBI) equation, and applications in the context of $g$-expectations and robust optimization with drift uncertainty.\ Section \ref{sec:best-case} discusses the related `best-case' optimization, which plays a crucial role in the proofs of the main results.\ Sections \ref{sec:proofs.single.period}, \ref{sec:proofs.DRO.multi}, and \ref{sec:proofs.sec.appl} contain the proofs of Sections \ref{sec:DRO}, \ref{sec:multi.DRO}, and \ref{sec:apply}, respectively.\ Appendix \ref{sec:apdx} contains a series of auxiliary statements.

    \section{Construction of the scaling limit of multi-period DROs}\label{sec:main}
        
	\subsection{Notation and preliminaries}\label{sec:notation} Throughout this paper, let $\|\cdot\|$ and $\langle \cdot, \cdot \rangle$ be the Euclidean norm and inner product on~$\X$, respectively.\ If a subset $K \subset \X$ is compact, we write $K \Subset \X$. 

	{We denote by $\Cb=\Cb(\X)$ the set of all continuous functions $f \colon \X \to \R$ with
    \[
     \|f\|_\infty:=\sup_{x\in \mathbb{R}^d}|f(x)|<\infty.
    \]
    We endow the space $\Cb$ with the {\it mixed topology} $\tau_{\mathcal M}$ between the topology induced by the supremum norm $\|\cdot\|_\infty$ and the topology $\tau_{\mathcal C}$ of uniform convergence on compacts, i.e., the strongest locally convex topology on $\Cb$ that coincides with $\tau_{\mathcal C}$ on $\|\cdot\|_\infty$-bounded subsets of $\Cb$, cf.\ \cite{haydon, wiweger}. We point out that, in this particular setup, the mixed topology $\tau_{\mathcal M}$ is the Mackey topology of the dual pair $(\Cb,{\rm ca})$, where ${\rm ca}$ denotes the space of all countably additive signed Borel measures on $\X$ of finite variation. Moreover, $\tau_{\mathcal M}$ belongs to the class of \textit{strict topologies}, cf.\ \cite{buck,wheeler1, sentilles}.\ We also refer to \cite{kunze,Goldys2022mixed,kocan} for surveys on semigroups in mixed or strict topologies and their relation to Markov processes. 

    While the explicit definition of the mixed topology is rather involved, convergence of sequences in the mixed topology can be characterized as follows.\ A sequence $(f_n)_{n \in \N} \subset \Cb$ converges to $f \in \Cb$ in the mixed topology if and only if
	\begin{align} \label{eq:mix_top}
		\sup_{n \in \N} \|f_n\|_\infty < \infty \quad\text{and}\quad \lim_{n \to \infty} \|f_n - f\|_{\infty, K} = 0 \quad \text{for every } K \Subset \X,    
	\end{align}
 	where $\|f\|_{\infty, K} := \sup_{x \in K}|f(x)|$ for $f\in \Cb$ and $K\Subset \X$.\ In this case, we write $f_n\tomix f$.
    
    We point out that the mixed topology is not metrizable and continuity is not equivalent to sequential continuity.\ 
    However, for convex and concave monotone operators defined on $\Cb$ and taking values in a locally convex vector lattice, continuity in the mixed topology is equivalent to sequential continuity, cf.\ \cite[Corollary 2.9]{Nendel2022lsc}, so that, for our purposes, it is enough to restrict our attention to the convergence of sequences in the mixed topology and sequential continuity of operators defined on $\Cb$.
    
    In our analysis, we also consider the subspace $\Lipb$ of all bounded Lipschitz continuous functions $f \colon \X \to \R$. For $f\in \Lipb$, we write $\|f\|_\Lip$ for the (smallest) Lipschitz constant of $f$, i.e.,
    \[
        \|f\|_{\Lip}:=\sup_{\substack{{x,y\in \X}\\{x\neq y}}} \frac{|f(x)-f(y)|}{\|x-y\|}.
    \]
    Moreover, for $k\in \N$, $\Cb^k$ denotes the space of all $k$-times continuously differentiable functions $f\in \Cb$ with bounded derivatives up to order $k$ and ${\rm C}_{\rm c}^k$ denotes the space of all $f\in \Cb^k$ with compact support $\operatorname{supp}(f)$.\ As usual, we set $\Cci:=\bigcap_{k\in \N} {\rm C}_{\rm c}^k$, and we use the notation $\nabla f$ for the gradient of $f$ and $\nabla^2 f$ for its Hessian.

	Finally, throughout this article, we fix some $p \in (1, \infty)$ and write $\cP_p:=\cP_p(\X)$ for the set of probability measures $\mu$ on the Borel $\sigma$-algebra $\cB(\X)$ with finite moment of order $p$, i.e., $\int_\X \|y\|^p\,\mu(\d y) < \infty$.
	On $\cP_p$, we consider the $p$-Wasserstein distance $\cW_p$, defined by
	\[
		\cW_p(\mu, \nu):= \bigg(\inf_{\gamma\in\cpl(\mu, \nu)} \int_{\X \times \X} \|z - y\|^p \,\gamma(\d y, \d z)\bigg)^{1/p},
	\]
	where $\cpl(\mu, \nu)$ is the set of all couplings between $\mu$ and $\nu$, i.e., probability measures on the product $\sigma$-algebra of $\cB(\X)$ on $\X \times \X$ with first marginal $\mu$ and second marginal $\nu$.\ We refer to \cite{ambrosio2021lectures,villani2008optimal} for a detailed discussion of the Wasserstein distance in the broader context of optimal transport.

	\subsection{The controlled reference Markov process and its transition semigroup}\label{sec:CMP.semigroup}
	Throughout, we consider a fixed nonempty \textit{action set} $A$.\ Following the framework in \cite{nendel2021wasserstein}, we consider a time-homogeneous controlled Markov process along with its transition semigroup, which is assumed to be described by the following two components. 
        \begin{setting}\label{dfn:refer_msr_ft}
        For any $a\in A$, 
        \begin{itemize}
            \item [(i)] $\mu^a:=(\mu_t^a)_{t\ge 0} \subset \cP_p$ is a family of probability measures with finite $p$-moment,
            \item [(ii)] $\psi^a:=(\psi_t^a)_{t\ge 0}$ is a family of continuous maps $ \X \to \X$.
        \end{itemize}
        We assume that the families $\mu^a$ and $\psi^a$ satisfy the \textit{Chapman-Kolmogorov equations}
            \begin{align} \label{eq:chap-kolm}
            	\quad \int_\X f(\psi_{s+t}^a(x) + y_{s+t})\,\mu_{s+t}^a(\d y_{s+t}) = \int_\X \int_\X f\Big( \psi_s^a\big(\psi_t^a(x) + y_t \big) + y_s \Big)\,\mu_s^a(\d y_s)\,\mu_t^a(\d y_t)
            \end{align}
            for all $s,t\geq 0$ and $f\in \Cb$.
        \end{setting}
        
        For each $a\in A$, the families $\mu^a$ and $\psi^a$ correspond to a time-homogeneous Markov process $X^a=(X^{\cdot,a}_t)_{t\geq 0}$ of the form
        \begin{align} \label{eq:ref_proc}
		X_t^{x, a} := \psi_t^a(x) + Y_t^a \quad \text{with}\quad Y_t^a \sim \mu_t^a\quad\text{for all }x\in \X\text{ and } t\geq0,
	\end{align}   
    whose transition kernels $p_t^a\colon \X\to \cP_p,\; x \mapsto p_t^a(x;\cdot)$ are given by
        \begin{align}\label{eq:ref_trans_prob}
		p_t^a(x;B) := \mu_t^a\big( \{ y \in \X \,|\, \psi_t^a(x)+y \in B\} \big)
	\end{align}
     for all $t\geq 0$, $x\in \X$, and $B \in \cB(\X)$.
	
	For any  $a \in A$ and $t\geq 0$, we define a linear operator $T^a(t)$ on $\Cb$ via
	\begin{align} \label{eq:ref_sg}
            \begin{aligned}
		\big( T^a(t)f\big)(x)
            :=\int_{\X}f( x_t)\, p_t^a(x;\d x_t)
            =
		\int_\X f(\psi_t^a(x) + y_t)\,\mu_t^a(\d y_t) 
            \end{aligned}
	\end{align}
    for all $f\in \Cb$ and $x\in \X$, where the last equality follows from the particular shape of the transition kernel $p_t^a$ in \eqref{eq:ref_trans_prob}.
        
    By \eqref{eq:chap-kolm},  for each $a\in A$, the family $T^a:=(T^a(t))_{t\geq 0}$ in \eqref{eq:ref_sg} satisfies the semigroup property
	\[
	T^a(s+t)f = T^a(t)T^a(s)f\quad \text{for all }s,t \ge 0\text{ and }f \in \Cb.
	\]
    That is, $T^a$ is the transition semigroup of the time-homogeneous Markov process $(X_t^{\cdot,a})_{t\geq 0}$ in~\eqref{eq:ref_proc}, and we refer to it as the \textit{reference semigroup} associated to the action $a\in A$.

    For our analysis, we need to impose several conditions for the behavior of the reference semigroups $(T^a)_{a\in A}$, more precisely, of its components $(\mu^a)_{a\in A}$ and $(\psi^a)_{a\in A}$, which we collect in the following assumption.
    
	\begin{assumption}\label{ass:general} The following conditions hold:
		\begin{enumerate}[leftmargin=3.em, label=(\roman*)]
            \item 
			For all $(x,a) \in \X\times A$, $\psi_0^a(x) = x$. Moreover, there exists a constant $C> 0$ such that, for all $x \in \X$,
            \[
            \sup_{t > 0} \sup_{a \in A} \frac{\|\psi_t^a(x) - x\|}{t} \le C(1 + \|x\|).
            \]
            \item
			There exists a constant $c \ge 0$ such that, for every $t \ge 0$, $(\psi_t^a)_{a\in {A}}$ are Lipschitz continuous with their Lipschitz constants uniformly bounded by $e^{ct}$, i.e., for all $t\geq0$ and $x_1,x_2\in \X$,
			\[
				\sup_{a \in A} \|\psi_t^a(x_1) - \psi_t^a(x_2)\| \le
				e^{ct} \|x_1 - x_2\|.
			\]
			\item
			For every $\chi \in \Cci$, it holds that
			\[
			\limsup_{t \downarrow 0} \sup_{a \in A} \bigg| \int_\X \frac{\chi(y) - \chi(0)}{t}\,\mu_t^a(\d y) \bigg| < \infty.
			\]
			\item 
			For all $a\in A$, $\mu_0^a = \delta_0$ and $\lim_{t \downarrow 0} \sup_{a \in A} \int_\X \|y\|^p\,\mu_t^a(\d y) = 0.$ Moreover, for every $\eps > 0$, there exists some $M >0$ such that
			\[
			\sup_{a \in A} \sup_{t > 0} \frac{\mu_t^a\big(\big\{y \in \X \,\big|\, \|y\| >  M\big\}\big)}{t} < \eps.
			\]
		\end{enumerate}
	\end{assumption}

    Before we start our analysis, we provide two examples for $(\mu^a)_{a\in A}$ and $(\psi^a)_{a\in A}$ satisfying Assumption \ref{ass:general} and the Chapman-Kolmogorov equation \eqref{eq:chap-kolm}.
    
        \begin{example}\label{lem:examples} Let $(\Omega,\mathcal{F},(\mathcal{F}_t)_{t\geq 0},\mathbb{P})$ be a 
        filtered probability space and let $(W_t)_{t\geq 0}$ be a $d$-dimensional Brownian motion on that filtered probability space. Moreover, let $A$ be a nonempty set.
           \begin{enumerate}
               \item [a)] (Brownian motion with drift) For all $t\geq 0$, $a\in A$, and $x\in \X$, define
                \begin{align}\label{eq:eg_psi}
                &\psi_t^a(x):=x+b(a)t, \\
                & Y_t^a=  \sigma(a)W_t,
                   \quad  Y_0^a=0,\label{eq:ito_diffusion}
                \end{align}
                with bounded functions $b\colon A\to \X$ and $\sigma\colon A\to \mathbb{R}^{d\times d}$.
               \item [b)] (Ornstein-Uhlenbeck process) For every $t\geq 0$, $a\in A$, and $x\in \X$, define
                \begin{align}\label{eq:eg_psi_2}
                &\psi_t^a(x):=e^{-\theta(a)t}x+\int_0^te^{-\theta(a)s}\kappa(a)\,\d s,  \\
                &Y_t^a = \int_0^t e^{-\theta(a)s}\sigma(a)\,{\rm d}W_s,
                \label{eq:OU_process}
                \end{align}
               where $\theta \colon A \to \R^{d \times d}$, $\kappa\colon A \to \mathbb{R}^d$, and $\sigma\colon A\to \mathbb{R}^{d\times d}$ are bounded functions and $\theta(a)$ is symmetric and positive semi-definite for all $a\in A$.
           \end{enumerate}
            Then, both previously described cases satisfy Assumption \ref{ass:general} and the Chapman-Kolmogorov equations \eqref{eq:chap-kolm}.
            
            Indeed,
             Assumption \ref{ass:general} (i) and (ii) immediately follow from the explicit form of $(\psi^a)_{a\in A}$ together with the boundedness of the functions $b$, $\kappa$, and $\theta$ and the fact that $\theta(a)$ is symmetric and positive semi-definite for all $a\in A$.\ Since both cases lead to Markov processes $(X_t^{\cdot, a})_{t\geq0}$ for each $a\in A$, the Chapman-Kolmogorov equations \eqref{eq:chap-kolm} are satisfied.

             For Assumption \ref{ass:general} (iii) and (iv), observe that
             \begin{align}
             \label{eq.normal.example1} &Y_t^a\sim \mathcal N\big(0,\sigma(a) \sigma(a)^\top t\big)\quad \text{or}\\
             &Y_t^a\sim \mathcal N\bigg(0,\int_0^t e^{-\theta(a)(t-s)}\sigma(a) \sigma(a)^\top\big(e^{-\theta(a)(t-s)}\big)^\top\,{\rm d}s\bigg) \label{eq.normal.example2}
             \end{align}
             in a) or b), respectively, where $\mathcal N$ refers to a $d$-dimensional normal distribution. Recall that, for a normally distributed random vector $Y\sim \mu:=\mathcal N(0,\sigma\sigma^\top)$ with $\sigma\in \R^{d\times d}$, $\chi\in \Cci$ and $t\geq0$, by Taylor's theorem,
             \[
                \int_\X \frac{\chi(y)-\chi(0)}{t}\,\mu(\d y)\leq \frac{\|\sigma\|^2}{2t}\|\nabla^2\chi\|_\infty
             \]
             and, by Markov's inequality,
            \[
           \frac{\mu\big(\big\{y \in \X \,\big|\, \|y\| >  M\big\}\big)}{t} \leq \frac{\|\sigma\|^2}{M^2t}.
			\]
            Hence, using the boundedness of $\sigma$ and the fact that $\theta(a)$ is symmetric and positive semi-definite for all $a \in A$, properties (iii) and (iv) follow in both cases, since $\mu_0^a=\delta_0$ for all $a\in A$ and $\lim_{t \downarrow 0} \sup_{a \in A} \int_\X \|y\|^p\,\mu_t^a(\d y) = 0$ due to the fact that $\mu_t^a$ is a normal distribution of the form \eqref{eq.normal.example1} or \eqref{eq.normal.example2}.
        \end{example}

	\subsection{Single-period distributionally robust optimization}\label{sec:DRO}
	In this section, we introduce a family of operators that represent a class of single-period distributionally robust optimization (DRO) problems.
    
    Recall the family $(\mu^a)_{a\in A}$ given in Setting \ref{dfn:refer_msr_ft} (i).\ Throughout, we fix a radius $m\geq0$, which corresponds to a \textit{degree of uncertainty} that is scaled proportionally to time. Then, for all $a\in A$ and $t\geq 0$, we denote by
	\begin{align}\label{eq:uncertain.set}
	\mathcal{B}_t^a(m):=\big\{\nu\in \cP_p\,\big|\,\cW_p(\mu_t^a, \nu) \le t m \big\}
	\end{align}
	the set of all probability measures that lie within a $t m$-radius of the reference distribution $\mu_t^a\in \mathcal{P}_p$ w.r.t.\ the $p$-Wasserstein distance $\mathcal{W}_p$. 
	
	We define a family of operators $I:=(I(t))_{t \ge 0}$  on~$\Cb$ by setting
	\begin{align}\label{eq:nonlinear_I}
		\quad
		\begin{aligned}
		&\big( I(t)f \big)(x) := \inf_{a \in A} \sup_{\nu\in\mathcal{B}^a_t(m)} \int_\X f\big( \psi_t^a(x) + z\big)\,\nu(\d z) \\
		\end{aligned}
	\end{align}
    for all $t\geq0$, $f\in \Cb$, and $x\in \X$,
	so that $I(t)f$ corresponds to a static DRO problem for the ambiguity sets $(\mathcal{B}_t^a(m))_{a\in A}$ given in \eqref{eq:uncertain.set}.\ Moreover, we consider the family of operators $T:=(T(t))_{t\geq 0}$, given by
      \[
        \big(T(t)f\big)(x):=\inf_{a \in A} \big(T^a(t)f\big)(x)\quad \text{for all }t\geq 0,\,f\in \Cb,\text{ and }x\in \X.
    \]
    If $m=0$, we have $I(t)=T(t)$ for all $t\geq 0$, i.e., the family $(I(t))_{t\geq 0}$ corresponds to a minimization of the reference semigroups $(T^a(t))_{t\geq 0}$ given in \eqref{eq:ref_sg} over $a\in A$.

    The following proposition forms the starting point of our theoretical analysis. Its proof can be found in Section \ref{sec:proofs.single.period}. For similar statements without the additional minimization, we refer to \cite{bartl2021limits,nendel2021wasserstein}.

    \begin{proposition} \label{lem:I.on.Cb}
		Suppose that Assumption \ref{ass:general} holds.
    \begin{enumerate}[label=(\roman*)]
        \item For every $t \ge 0$, the operator $I(t) \colon \Cb \to \Cb$ is well-defined, monotone, and a contraction, i.e., $I(t)f\geq I(t)g$ for all $f,g\in \Cb$ with $f\geq g$ and
        \[
         \|I(t)f - I(t)g\|_\infty \le \|f - g\|_\infty\quad\text{for all }f,g\in \Cb.
        \]
        \item For all $t \ge 0$ and $f\in \Lipb$, we have $\|I(t)f\|_{\Lip}\leq e^{ct}\|f\|_{\Lip}$, where $c \ge 0$ is defined in Assumption~\ref{ass:general}~(ii).
        \item For all $s,t \ge 0$, $f\in \Cb$ and $x \in \X$,
		\[
		\big( I(s)I(t)f \big)(x) \le \big(I(s+t)f\big) (x).
		\]
		\end{enumerate}
        In particular, choosing $m=0$, these properties also hold for the family $T$ instead of $I$.
	\end{proposition}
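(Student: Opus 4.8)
The plan is to treat all three parts through a unified ``one-step kernel'' representation. For $a\in A$, $\nu\in\cP_p$ and $t\ge0$, write $\Phi_t^{a,\nu}f(x):=\int_\X f(\psi_t^a(x)+z)\,\nu(\d z)$, so that $I(t)f=\inf_a\sup_{\nu\in\mathcal{B}_t^a(m)}\Phi_t^{a,\nu}f$. Each $\Phi_t^{a,\nu}$ is linear, monotone, and takes values in $[-\|f\|_\infty,\|f\|_\infty]$. Monotonicity of $I(t)$ is then immediate, since $f\ge g$ forces $\Phi_t^{a,\nu}f\ge\Phi_t^{a,\nu}g$ for every $a,\nu$, and both $\sup_\nu$ and $\inf_a$ preserve this order. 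For the contraction I would use the elementary fact that, for bounded families, $|\inf_a\sup_\nu\alpha^{a,\nu}-\inf_a\sup_\nu\beta^{a,\nu}|\le\sup_{a,\nu}|\alpha^{a,\nu}-\beta^{a,\nu}|$; applied with $\alpha=\Phi_t^{a,\nu}f$, $\beta=\Phi_t^{a,\nu}g$ and $|\Phi_t^{a,\nu}(f-g)|\le\|f-g\|_\infty$, this yields $\|I(t)f-I(t)g\|_\infty\le\|f-g\|_\infty$. Boundedness of $I(t)f$ follows from the same two-sided bound, leaving only continuity to be handled.

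For the Lipschitz estimate (ii), the same inf-sup reduction gives $|I(t)f(x_1)-I(t)f(x_2)|\le\sup_{a,\nu}|\Phi_t^{a,\nu}f(x_1)-\Phi_t^{a,\nu}f(x_2)|$, and for fixed $a,\nu$ the right-hand side is at most $\int|f(\psi_t^a(x_1)+z)-f(\psi_t^a(x_2)+z)|\,\nu(\d z)\le\|f\|_{\Lip}\,\|\psi_t^a(x_1)-\psi_t^a(x_2)\|\le\|f\|_{\Lip}\,e^{ct}\|x_1-x_2\|$ by Assumption \ref{ass:general}(ii). Taking suprema proves $\|I(t)f\|_{\Lip}\le e^{ct}\|f\|_{\Lip}$. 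To get continuity of $I(t)f$ for general $f\in\Cb$ I would sandwich $f$ between its Moreau--Yosida regularizations $f_\lambda(x):=\inf_y\{f(y)+\lambda\|x-y\|\}$ and $f^\lambda(x):=\sup_y\{f(y)-\lambda\|x-y\|\}$, which are $\lambda$-Lipschitz, uniformly bounded by $\|f\|_\infty$, and satisfy $f_\lambda\uparrow f$, $f^\lambda\downarrow f$ locally uniformly. By monotonicity $I(t)f_\lambda\le I(t)f\le I(t)f^\lambda$ with both bounds continuous by (ii), so it suffices to show $I(t)f^\lambda(x_0)-I(t)f_\lambda(x_0)\to0$ as $\lambda\to\infty$ for each $x_0$; the contraction-type bound reduces this to controlling $\sup_a\sup_{\nu\in\mathcal{B}_t^a(m)}\int(f^\lambda-f_\lambda)(\psi_t^a(x_0)+z)\,\nu(\d z)$. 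Splitting the integral at $\{\|z\|\le R\}$, on which $\psi_t^a(x_0)+z$ stays in a fixed compact set (using the bound on $\|\psi_t^a(x_0)-x_0\|$ from Assumption \ref{ass:general}(i)) where $f^\lambda-f_\lambda\to0$ uniformly, and on $\{\|z\|>R\}$ bounding by $2\|f\|_\infty$ times a tail mass, I close the estimate using uniform tightness of $\{\nu:\nu\in\mathcal{B}_t^a(m),\,a\in A\}$. That tightness follows from the uniform tail bound on $\{\mu_t^a\}_a$ in Assumption \ref{ass:general}(iv) together with the triangle inequality for $\cW_p$ and Markov's inequality, which transfer tightness from the centers $\mu_t^a$ to all $\nu$ in the balls.

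The substantive part is the subadditivity (iii). Fixing $c\in A$, the choice of $a=c$ in both operators and the definition of the infimum give $(I(s)I(t)f)(x)\le\sup_{\nu_1\in\mathcal{B}_s^c(m)}\int\big(\sup_{\nu_2\in\mathcal{B}_t^c(m)}\int f(\psi_t^c(\psi_s^c(x)+z_1)+z_2)\,\nu_2(\d z_2)\big)\,\nu_1(\d z_1)$. The idea is to realize this two-step worst case as a single one-step worst case over $\mathcal{B}_{s+t}^c(m)$. Given near-optimal $\nu_1$ and a near-optimal selection $z_1\mapsto\rho_{z_1}\in\mathcal{B}_t^c(m)$, I would define $\sigma$ as the law of $\psi_t^c(\psi_s^c(x)+Z_1)+Z_2-\psi_{s+t}^c(x)$, where $Z_1\sim\nu_1$ and $Z_2\mid Z_1\sim\rho_{Z_1}$, so that $\int f(\psi_{s+t}^c(x)+w)\,\sigma(\d w)$ reproduces the two-step expression exactly. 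To verify $\sigma\in\mathcal{B}_{s+t}^c(m)$ I would rewrite $\mu_{s+t}^c$, via the Chapman--Kolmogorov equation \eqref{eq:chap-kolm} with the roles of $s,t$ interchanged, as the law of $\psi_t^c(\psi_s^c(x)+\tilde Y_s)+\tilde Y_t-\psi_{s+t}^c(x)$ with independent $\tilde Y_s\sim\mu_s^c$, $\tilde Y_t\sim\mu_t^c$, couple $\tilde Y_s$ to $Z_1$ optimally and, conditionally on $Z_1$, couple $\tilde Y_t$ to $Z_2$ optimally. Minkowski's inequality together with the bound $\|\psi_t^c(u)-\psi_t^c(v)\|\le e^{ct}\|u-v\|$ from Assumption \ref{ass:general}(ii) then controls $\cW_p(\mu_{s+t}^c,\sigma)$ by the combined radii, placing $\sigma$ in $\mathcal{B}_{s+t}^c(m)$; taking suprema and the infimum over $c$ gives $I(s)I(t)f\le I(s+t)f$.

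I expect the main obstacle to be the measurability of the selection $z_1\mapsto\rho_{z_1}$ and of the associated conditional couplings, which is what makes the Fubini-type identity defining $\sigma$ and the coupling estimate rigorous. I would address this with a measurable maximum/selection theorem, or more robustly by approximating the optimal kernel with kernels that are piecewise constant in $z_1$, for which the construction is elementary, and passing to the limit using the tightness and continuity established above. A secondary delicate point is the bookkeeping of the Lipschitz constant of $\psi_t^c$ in the Wasserstein estimate, which is precisely where Assumption \ref{ass:general}(ii) enters; note that for the reference semigroups of Example \ref{lem:examples} the maps $\psi_t^a$ are non-expansive, so the two radii combine cleanly into $(s+t)m$. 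Finally, for $m=0$ the set $\mathcal{B}_t^a(m)$ collapses to $\{\mu_t^a\}$, so all three statements reduce to the corresponding linear facts for each $T^a$, and the claims for $T=\inf_a T^a$ follow verbatim.
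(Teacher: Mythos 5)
Your parts (i) and (ii) are correct. The inf-sup contraction estimate, the Lipschitz bound from Assumption \ref{ass:general}~(ii), and the Moreau--Yosida sandwich $f_\lambda\le f\le f^\lambda$ combined with uniform tightness of $\bigcup_{a\in A}\cB_t^a(m)$ (which indeed follows, for fixed $t$, from Assumption \ref{ass:general}~(iv), the triangle inequality for $\cW_p$, and Markov's inequality) give well-definedness, monotonicity, the contraction property, and continuity of $I(t)$ on $\Cb$. This is a self-contained alternative to the paper's route, which proves the Lipschitz estimate on $\Lipb$ first and then extends to $\Cb$ via Lemma~\ref{lem:approx.res} and the bound $\|I(t)f-I(t)f_n\|_{\infty,K}\le\big\|J(t)|f-f_n|\big\|_{\infty,K}$, invoking Lemma~\ref{lem:J.cont.above}; the paper's detour reuses machinery that is needed again for Theorems \ref{thm:semigroup} and \ref{thm:semigroup.generator}, but for the present proposition your argument is valid.

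Part (iii) has a genuine gap, and it sits exactly at what you set aside as ``bookkeeping of the Lipschitz constant of $\psi_t^c$''. Your coupling yields $\cW_p(\mu_{s+t}^c,\sigma)\le e^{ct}sm+tm$, because the first-period perturbation of size $sm$ is pushed through $\psi_t^c$, whose Lipschitz constant Assumption \ref{ass:general}~(ii) only bounds by $e^{ct}$; when $c>0$ this exceeds $(s+t)m$, so $\sigma$ need not lie in $\cB_{s+t}^c(m)$ and the proof does not close. Moreover, no smarter coupling can repair this, because the excess is real and not an artifact of your estimate: take $d=1$, $A$ a singleton, $p\in(1,\infty)$, $m=1$, $\mu_t=\delta_0$ for all $t\ge0$, and let $\psi_t$ be the flow of the ODE $\dot x=\arctan x$, so that Assumption \ref{ass:general} holds (with $C=\pi/2$, $c=1$) and the Chapman--Kolmogorov equations \eqref{eq:chap-kolm} reduce to the flow property $\psi_{s+t}=\psi_s\circ\psi_t$. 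For $f(z)=\tfrac13\min(\max(z,0),3)$, choosing $\nu_1=\nu_2=\delta_1$ and using $\psi_1(1)\ge 1+\arctan(1)$ gives $(I(1)I(1)f)(0)\ge f(\psi_1(1)+1)\ge f\big(2+\tfrac{\pi}{4}\big)=\tfrac13\big(2+\tfrac{\pi}{4}\big)$, whereas $\psi_2(0)=0$ and Jensen's inequality give $(I(2)f)(0)=\sup\big\{\int_\X f\,\d\nu\,:\,\big(\int_\X|z|^p\,\nu(\d z)\big)^{1/p}\le2\big\}\le\tfrac23$. Hence $(I(1)I(1)f)(0)>(I(2)f)(0)$: for genuinely expanding $\psi$ the inequality you are trying to prove by exhibiting an admissible $\sigma$ is itself violated, so the two radii ``combine cleanly'' only when the maps $\psi_t^a$ are non-expansive, as in Example \ref{lem:examples}.

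This is also precisely where your proof parts ways with the paper's. The reduction to a fixed action $c$ and the final $\inf_a$-combination are identical in both; but the paper never constructs a composed measure at all: it observes that each $I^a$ is an instance of the one-action operator of \cite{nendel2021wasserstein} with penalty $\phi(v)=\infty\cdot\one_{(m,\infty)}(v)$ and cites \cite[Lemma~3.10]{nendel2021wasserstein} for the one-action inequality $I^a(t)I^a(s)f\le I^a(t+s)f$. The entire difficulty you ran into is thus delegated to that citation, and your (correct) radius computation, together with the example above, shows that the hypotheses under which such a composition lemma can hold must exclude expanding drifts. Your secondary concern about the measurable selection $z_1\mapsto\rho_{z_1}$ is legitimate but standard, and likewise absorbed by the citation. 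In summary: (i) and (ii) stand, but the Wasserstein-radius step in (iii) is a real gap that cannot be filled under Assumption \ref{ass:general} with $c>0$; as written, your proof of (iii) is valid only in the non-expansive case.
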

    
    We now turn our focus on the asymptotic relation between the families $I$ and $T$ as $t\to 0$, which can be understood as a sensitivity w.r.t.\ the degree of uncertainty, cf.\ \cite{bartl2021sensitivity,nendel2022parametric} and \cite{bartl2023sensitivity,bartl2023sensitivity1} in a dynamic context.\ 

    \begin{remark}\label{rem.gamma}
    Observe that, by \cite{villani2008optimal}, for all $f\in \Cb^1$, $t >0$, $x \in \X$, $a\in A$, and $\eps>0$, there exists $\nu_t^{a,x} \in \cB_t^a(m)$ and an optimal coupling $\gamma_t^{a, x} \in \cP_p(\X \times \X)$ between $\mu_t^a$ and $\nu_t^{a,x}$ such that
    \begin{align}
    \notag \frac{\big(I^a(t)f\big)(x)- \big(T^a(t)f\big)(x)}{t}&\leq  \frac1t\bigg(\int_\X f\big(\psi_t^a(x) + z\big)\,\nu_t^{a, x}(\d z)- \big(T^a(t)f\big)(x)\bigg)+\eps\\
    &=\frac1t\int_{\X\times \X}\Big(f\big(\psi_t^a(x)+z\big)-f\big(\psi_t^a(x)+y\big)\Big)\,\gamma_t^{a,x}(\d y,\d z) +\eps\notag \\
    &\leq \|\nabla f\|_\infty \frac1t\int_{\X\times \X}\|z-y\|\,\gamma_t^{a,x}(\d y,\d z) +\eps\notag \\
    & \le \|\nabla f\|_\infty \frac{\cW_p(\mu_t^a,\nu_t^{a,x})}{t}+\eps \leq m\|\nabla f\|_\infty+\eps \notag,
    \end{align}
    where 
    \begin{equation}\label{eq:def.Ia}
    \big(I^a(t)f\big)(x):= \sup_{\nu\in \cB_t^a(m)} \int_{\X} f\big(\psi_t^a(x)+z\big)\,\nu(\d z)
    \end{equation}
    for all $a\in A$, $t\geq0$, $f\in \Cb$, and $x\in \X$.\ Letting $\eps\to 0$, we thus obtain the following a priori estimate
    \begin{equation}
     \frac{\big(I^a(t)f\big)(x)- \big(T^a(t)f\big)(x)}{t} \leq m\|\nabla f\|_\infty, \label{eq.le.gen.1}
    \end{equation}
     which is further strengthened in the following proposition.
     \end{remark}
    
    \begin{proposition}\label{lem:generator}
		Suppose that Assumption \ref{ass:general} holds. 
            For every $m\geq 0$ and every $f \in \Cb^1$, 
		\begin{align}\label{eq:sensitivity}
		      \lim_{t \downarrow 0} \frac{I(t)f - T(t)f}{t} = m \|\nabla f(\,\cdot\,)\|,
		\end{align}
		where the limit is to be understood in the mixed topology.\footnote{Here and throughout $\|\nabla f(\,\cdot\,)\|$ refers to the map $\X \to \R$, $x \mapsto \|\nabla f(x)\|$.}
	\end{proposition}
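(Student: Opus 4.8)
The plan is to reduce the statement to a uniform-in-$a$ estimate for the single-action operators $I^a$ and $T^a$ of \eqref{eq:def.Ia} and then pass the infimum over $A$ through. For any two families $(g^a)_{a\in A}$ and $(h^a)_{a\in A}$ of reals one has
\[
\inf_{a\in A}(g^a - h^a) \le \inf_{a\in A} g^a - \inf_{a\in A} h^a \le \sup_{a\in A}(g^a - h^a).
\]
Applying this with $g^a = (I^a(t)f)(x)$ and $h^a = (T^a(t)f)(x)$ and dividing by $t$ pins $\tfrac1t\big((I(t)f)(x)-(T(t)f)(x)\big)$ between $\inf_a$ and $\sup_a$ of $\tfrac1t\big((I^a(t)f)(x)-(T^a(t)f)(x)\big)$. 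Hence it suffices to show that, uniformly for $x$ in compact sets,
\[
\sup_{a\in A}\Big| \tfrac1t\big((I^a(t)f)(x) - (T^a(t)f)(x)\big) - m\|\nabla f(x)\|\Big| \xrightarrow[t\downarrow0]{} 0,
\]
since then both $\inf_a$ and $\sup_a$ of the difference quotient converge to $m\|\nabla f(x)\|$. The quotient lies in $[0,\,m\|\nabla f\|_\infty]$ (lower bound because $\mu_t^a\in\cB_t^a(m)$, upper bound by the a priori estimate \eqref{eq.le.gen.1}), and $x\mapsto m\|\nabla f(x)\|$ is bounded and continuous; thus uniform convergence on compacts together with this sup-norm bound is precisely convergence in the mixed topology by \eqref{eq:mix_top}.

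For the lower bound, fix a unit vector $e$ (take $e=\nabla f(x)/\|\nabla f(x)\|$ when $\nabla f(x)\neq0$), set $v:=tme$, and let $\nu_t^a:=(T_v)_\#\mu_t^a$ be the push-forward of $\mu_t^a$ under $y\mapsto y+v$. The coupling $(\mathrm{id},T_v)_\#\mu_t^a$ has cost $\|v\|=tm$, so $\nu_t^a\in\cB_t^a(m)$ and therefore
\[
(I^a(t)f)(x) - (T^a(t)f)(x) \ge \int_\X \big(f(\psi_t^a(x)+y+v) - f(\psi_t^a(x)+y)\big)\,\mu_t^a(\d y).
\]
Writing $f(\cdot+v)-f(\cdot)=\int_0^1\nabla f(\cdot+sv)\cdot v\,\d s$ and dividing by $t$ turns the right-hand side into $m\int_\X\int_0^1\nabla f(\psi_t^a(x)+y+sv)\cdot e\,\d s\,\mu_t^a(\d y)$. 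Since $\psi_t^a(x)\to x$ uniformly in $a$ by Assumption \ref{ass:general}(i), $v\to0$, and $\mu_t^a$ concentrates at $0$ uniformly in $a$ by Assumption \ref{ass:general}(iv) (so $\mu_t^a(\|y\|>\delta)\le\delta^{-p}\int\|y\|^p\,\mu_t^a(\d y)\to0$ uniformly), splitting the $y$-integral at $\{\|y\|\le\delta\}$ and using continuity of $\nabla f$ near $x$ on that part and the bound $2\|\nabla f\|_\infty$ on its complement yields the limit $m\,\nabla f(x)\cdot e=m\|\nabla f(x)\|$, uniformly in $a$ and in $x$ on compacts.

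For the upper bound one must refine \eqref{eq.le.gen.1} by replacing the global constant $\|\nabla f\|_\infty$ with the local value $\|\nabla f(x)\|$. Fix $\eps>0$ and choose $\nu_t^{a,x}\in\cB_t^a(m)$ with $\int f(\psi_t^a(x)+z)\,\nu_t^{a,x}(\d z)\ge(I^a(t)f)(x)-t\eps$, together with an optimal coupling $\gamma_t^{a,x}$ between $\mu_t^a$ and $\nu_t^{a,x}$, so that $\int\|z-y\|^p\,\gamma_t^{a,x}=\cW_p(\mu_t^a,\nu_t^{a,x})^p\le(tm)^p$. Expanding as above and inserting $\nabla f(\psi_t^a(x)+y+s(z-y))=\nabla f(x)+[\,\cdots\,]$ splits the difference quotient into a \emph{main term} $\nabla f(x)\cdot\frac1t\int(z-y)\,\gamma_t^{a,x}$, bounded by $\|\nabla f(x)\|\cdot\frac1t\int\|z-y\|\,\gamma_t^{a,x}\le m\|\nabla f(x)\|$ via Jensen and $\cW_p(\mu_t^a,\nu_t^{a,x})\le tm$, plus an \emph{error term} $\frac1t\int\int_0^1\|\nabla f(\text{pt})-\nabla f(x)\|\,\|z-y\|\,\d s\,\gamma_t^{a,x}$. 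I expect this error term to be the main obstacle, since it requires localizing the gradient uniformly in $a$ in the presence of the tails of $\mu_t^a$ and of an $a$-dependent coupling. I would control it by splitting $\gamma_t^{a,x}$ at $\{\|y\|\le\delta,\ \|z-y\|\le\delta\}$ — where the integration point stays near $x$ and $\|\nabla f(\text{pt})-\nabla f(x)\|$ is small by continuity of $\nabla f$ on a neighborhood of the compact — and its complement, where $\|\nabla f(\text{pt})-\nabla f(x)\|\le2\|\nabla f\|_\infty$ and, by H\"older with exponents $p$ and $p/(p-1)$,
\[
\tfrac1t\int_{\text{compl.}}\|z-y\|\,\gamma_t^{a,x} \le m\,\big(\gamma_t^{a,x}(\text{compl.})\big)^{1-1/p}.
\]
The complement mass is at most $\mu_t^a(\|y\|>\delta)+\delta^{-p}(tm)^p$, both tending to $0$ uniformly in $a$ and $x$ by Assumption \ref{ass:general}(iv), so the error vanishes uniformly. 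Taking $\limsup_{t\downarrow0}$ and then $\eps\downarrow0$ gives $\limsup\le m\|\nabla f(x)\|$ uniformly, which, combined with the lower bound and the sandwich inequality, completes the proof.
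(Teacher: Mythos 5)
Your proof is correct, and it diverges from the paper's exactly where the paper is most compressed, so the comparison is instructive. For the upper estimate you reproduce the paper's derivation of \eqref{eq.le.gen.2}: an $\eps t$-optimal $\nu_t^{a,x}\in\cB_t^a(m)$ with optimal coupling $\gamma_t^{a,x}$, the fundamental theorem of calculus, the main term controlled by $m\|\nabla f(x)\|$ through Jensen/H\"older and $\cW_p(\mu_t^a,\nu_t^{a,x})\le tm$, and the error term handled by splitting the coupling at $\{\|y\|\le\delta,\,\|z-y\|\le\delta\}$ and applying Markov's inequality together with Assumption \ref{ass:general}\,(iv); your splitting is the paper's up to bookkeeping. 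The genuine difference is the lower bound. The paper dispatches it via the chain $\frac{I(t)f-T(t)f}{t}\ge\inf_{a\in A}\frac{I^a(t)f-T^a(t)f}{t}\ge-\sup_{a\in A}\frac{I^a(t)(-f)-T^a(t)(-f)}{t}$ followed by an appeal to \eqref{eq.le.gen.1} and \eqref{eq.le.gen.2}; but applied to $-f$ these only yield $\frac{I(t)f-T(t)f}{t}\ge-m\|\nabla f(\,\cdot\,)\|-\eps$ on compacts, not the bound $\ge m\|\nabla f(\,\cdot\,)\|-\eps$ that the two-sided limit requires, so the paper is in effect deferring this step to the competitor construction in \cite[Lemma~3.11]{nendel2021wasserstein}, which it says it follows. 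You carry out that construction explicitly and uniformly in $a$: the translate $(T_v)_{\#}\mu_t^a$ with $v=tm\,\nabla f(x)/\|\nabla f(x)\|$ lies in $\cB_t^a(m)$ via the coupling $(\id,T_v)_{\#}\mu_t^a$, and the FTC-plus-concentration estimate (Assumption \ref{ass:general}\,(i) and (iv)) gives $\inf_{a\in A}\frac{(I^a(t)f)(x)-(T^a(t)f)(x)}{t}\ge m\|\nabla f(x)\|-\eps$ uniformly on compacts; combined with your two-sided infimum sandwich and the uniform bound $0\le\frac{I(t)f-T(t)f}{t}\le m\|\nabla f\|_\infty$ from \eqref{eq.le.gen.1}, this yields the mixed-topology convergence via \eqref{eq:mix_top}. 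In short, the two proofs share the upper-bound machinery, but your explicit shift competitor makes the argument self-contained, supplying precisely the step that the paper's written proof leaves implicit (and, read literally, gaps).
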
 

    The proof of Proposition \ref{lem:generator} is contained in Section \ref{sec:proofs.single.period}.

	\subsection{Multi-period DRO and its scaling limit}\label{sec:multi.DRO}
	In this section, we consider a multi-period version of the DRO problem described in \eqref{eq:nonlinear_I}. 
    By `multi-period', we mean that the decision maker {\it iteratively} optimizes their worst-case objective over a finite time horizon.\ We then consider the scaling limit of this multi-period DRO problem as the number of decisions within the finite time horizon tends to infinity.
    
    To that end, we consider the set
    \[
    \operatorname{P}:=\{\pi\subset [0,\infty)\,|\, 0\in \pi,\, \# \pi<\infty\}
    \]
    of all finite partitions of closed intervals on the positive half line that contain zero. Observe that, by definition, every $\pi\in \operatorname{P}$ is either the set containing only zero or of the form $\pi=\{t_0,\ldots, t_k\}$ with $0=t_0<\ldots <t_k$ and $k \in \N$.
    In the latter case, we define $$\mesh \pi:= \max_{i=1,\ldots, k} (t_i-t_{i-1})$$ as the \textit{maximal mesh size} and, for the sake of completeness, we set $\mesh \{0\}:=0$.
    
    Then, for $\pi = \{ t_0, \dots, t_k \} \in \operatorname{P}$ with $0=t_0<\ldots<t_k$ and $k\in \N$, we define 
	\begin{align} \label{eq:def_iter}
		\mathcal{I}(\pi)f := \big(I(t_1 - t_0) \cdots I(t_k - t_{k-1})\big) f\quad \text{for all }f\in \Cb
	\end{align}
	as the composition of the operators $I$ given in \eqref{eq:nonlinear_I} over the partition $\pi$, which represents a multi-period DRO problem over the time interval $[0,t_k]$, see Figure \ref{fig:multi-step.DRO}.\ Again, for the sake of completeness, we define $\mathcal{I}(\{0\})f:=f$.

    Since $I(t)$ maps $\Cb$ to $\Cb$ for all $t\geq 0$, see Proposition \ref{lem:I.on.Cb} (i), the multi-period operator $\mathcal{I}(\pi)$ in \eqref{eq:def_iter} also maps $\Cb$ to $\Cb$ for all $\pi\in \operatorname{P}$.

    \begin{figure}[h]
    \begin{center}
        \def\figelementscolor{black}
    	\begin{tikzpicture}[scale=1]
			font=\fontsize{10}{12}\selectfont
    		\draw [->, thick] (0.5,0) -- (8.5,0);
    		
    		\foreach \x in {1, 2.5,4,6,7.5}
    		{
    			\draw [thick] (\x,0) -- (\x,0.1);
    		}

    		\node [below] at (1,0) {$t_0 = 0$};
    		\node [below] at (2.5,0) {$t_1$};
    		\node [below] at (4,0) {$t_2$};
    		\node [below] at (5,0) {$\cdots$};
    		\node [below] at (6,0) {$t_{k-1}$};
    		\node [below] at (7.5,0) {$t_k$};
    		
    		\draw [thick, \figelementscolor, <-] (1,0.2) to [out=70, in=110]  node [above] {$I(t_1 - t_0)$} (1+1.5,0.2);
    		\draw [thick, \figelementscolor, <-] (2.5,0.2) to [out=70, in=110]  node [above] {$I(t_2 - t_1)$} (2.5+1.5,0.2);
    		\draw [thick, \figelementscolor, <-] (6,0.2) to [out=70, in=110]  node [above] {$I(t_k - t_{k-1})$} (6+1.5,0.2);
    		\node [above, color=\figelementscolor] at (5,0) {$\cdots$};
    	\end{tikzpicture}
        \caption{Multi-period DRO problem as iteration of the one-step DRO operator $I$.}
        \label{fig:multi-step.DRO}
    \end{center}
    \end{figure}
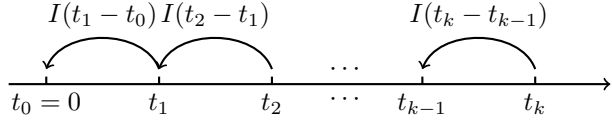
	
	By definition of $\mathcal{I}$ in \eqref{eq:def_iter} and the monotonicity property of $I$ in Proposition \ref{lem:I.on.Cb} (iii), 
    for all $\pi^1,\pi^2\in \operatorname{P}$ with $\pi^2\subset \pi^1$, i.e., $\pi^1$ is finer than $\pi^2$, it holds
	\[
	\mathcal{I}(\pi^1) f \le \mathcal{I}(\pi^2) f\quad \text{for all }f\in \Cb.
	\]
	This allows us to define
	\begin{align} \label{eq:def_S}
		\big(\mathcal{S}(t)f\big)(x) := \inf_{\pi \in \operatorname{P}_t} \big(\mathcal{I}(\pi)f\big)(x)
	\end{align}
    for all $t \ge 0$, $f \in \Cb$, and $x \in \X$, where
    $$\operatorname{P}_t:=\{\pi\in \operatorname{P}\,|\, \max \pi=t\}$$
    is the set of all finite partitions of the interval $[0,t]$ containing $0$ and $t\geq 0$. 

	In a first step, we show that the family $\mathcal{S}:=(\mathcal{S}(t))_{t\geq 0}$, given in \eqref{eq:def_S}, is a strongly continuous monotone semigroup according to the following definition.
	
	\begin{definition} \label{def:semigroup}
			A family $S:=(S(t))_{t \ge 0}$ of continuous operators $S(t) \colon \Cb \to \Cb$ is a {\it strongly continuous monotone semigroup on} $\Cb$ if the following conditions are satisfied:
			\begin{enumerate}
				\item [(i)] $S(0)f=f$ for every $f\in \Cb$.
				\item [(ii)] (Monotonicity) ${S}(t)f \le {S}(t)g$ for all $t \ge 0$ and all $f,g \in \Cb$ with $f \le g$;
				\item [(iii)] (Semigroup property) ${S}(t){S}(s)f = {S}(t+s)f$ for all $t,s \ge 0$ and $f \in \Cb$;
				\item [(iv)] (Strong continuity) the map $t \mapsto {S}(t)f$ is continuous for all $f \in \Cb$. 
			\end{enumerate}
	\end{definition}

        The following theorem is the first main result of this paper.
        
	\begin{theorem}\label{thm:semigroup}
		Suppose that Assumption \ref{ass:general} holds.\ Then, the family $\mathcal{S}$ in~\eqref{eq:def_S} is a strongly continuous monotone semigroup on $\Cb$. Moreover,
        \begin{equation} \label{eq:equivalent.S}
          \cS(t)f = \inf_{\substack{\pi \in \operatorname{P}_t\\\mesh \pi \le h}} \cI(\pi) f  \quad \text{for all } h > 0.
        \end{equation}
	\end{theorem}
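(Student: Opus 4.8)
My plan is to dispatch the soft structural properties first, then isolate the genuine analytic content in a convergence statement along vanishing meshes, and finally deduce the semigroup property and strong continuity from it. For the structural reductions: each $\mathcal I(\pi)$ is a composition of the monotone contractions $I(\cdot)$ from Proposition \ref{lem:I.on.Cb}, hence itself monotone with $\|\mathcal I(\pi)f-\mathcal I(\pi)g\|_\infty\le\|f-g\|_\infty$ and $\|\mathcal I(\pi)f\|_\infty\le\|f\|_\infty$ (using $I(t)0=0$). For $f\in\Lipb$ and $\pi\in\operatorname P_t$, the one-step bound in Proposition \ref{lem:I.on.Cb}(ii) telescopes to $\|\mathcal I(\pi)f\|_{\Lip}\le e^{ct}\|f\|_{\Lip}$, uniformly in $\pi$, so $\{\mathcal I(\pi)f:\pi\in\operatorname P_t\}$ is equibounded and equi-Lipschitz and its pointwise infimum $\mathcal S(t)f$ lies in $\Lipb$. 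As a pointwise infimum of contractions, $\mathcal S(t)$ is again a contraction, and I would extend it to all of $\Cb$ using $\tau_{\mathcal M}$-density of $\Lipb$ together with the $\tau_{\mathcal M}$-continuity established below. Property (i) of Definition \ref{def:semigroup} holds since $\operatorname P_0=\{\{0\}\}$ and $\mathcal I(\{0\})f=f$, and (ii) is immediate as $\mathcal S(t)$ is an infimum of monotone maps. Identity \eqref{eq:equivalent.S} is quick: the right-hand side trivially dominates $\mathcal S(t)f$, and conversely every $\pi\in\operatorname P_t$ admits a refinement $\pi'\supseteq\pi$ with $\mesh\pi'\le h$, for which $\mathcal I(\pi')f\le\mathcal I(\pi)f$ by the refinement monotonicity recorded after \eqref{eq:def_iter}; taking infima yields the reverse inequality.

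The analytic core I would prove next is a convergence along vanishing meshes: for $f$ in a regular class (say $f\in\Cb^2\cap\Lipb$) and any sequence $(\pi_n)\subset\operatorname P_t$ with $\mesh\pi_n\to0$, one has $\mathcal I(\pi_n)f\tomix\mathcal S(t)f$. The lower bound is free, as $\mathcal I(\pi_n)f\ge\inf_\pi\mathcal I(\pi)f=\mathcal S(t)f$ for all $n$. For the upper bound I would establish a second-order local consistency estimate of the shape $\|I(s+r)f-I(s)I(r)f\|_{\infty,K}\le C_K\,sr$, drawing on the one-period generator analysis behind Proposition \ref{lem:generator} and the a priori bound \eqref{eq.le.gen.1}; telescoping it across a partition shows that $\pi\mapsto\mathcal I(\pi)f$ is uniformly Cauchy on compacts as $\mesh\pi\to0$, with rate $O(\mesh\pi)$, and the decreasing equi-Lipschitz structure identifies the common limit with the infimum $\mathcal S(t)f$. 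The passage from this regular class to general $f\in\Cb$ is then carried out by the contraction property and $\tau_{\mathcal M}$-density. I expect this consistency estimate—and in particular its uniformity in the action $a\in A$—to be the \textbf{main obstacle}: it is exactly here that the inf-sup structure of $I(t)$ in \eqref{eq:nonlinear_I} breaks the convexity exploited in \cite{bartl2021limits,nendel2021wasserstein}, and keeping the transported compacts under control through the remaining contraction factors requires the growth and moment bounds of Assumption \ref{ass:general}.

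With the convergence in hand, I would prove the semigroup property (iii) as follows. Adding the breakpoint $t$ to any partition of $[0,s+t]$ only refines it, so refinement monotonicity gives
\[
\mathcal S(s+t)f=\inf_{\pi_1\in\operatorname P_s,\ \pi_2\in\operatorname P_t}\mathcal I(\pi_1)\mathcal I(\pi_2)f,
\]
where $\pi_1$ encodes the part of the partition on $[t,s+t]$ after the obvious shift. One inequality is soft: monotonicity of $\mathcal I(\pi_1)$ yields $\mathcal I(\pi_1)\mathcal S(t)f=\mathcal I(\pi_1)\inf_{\pi_2}\mathcal I(\pi_2)f\le\inf_{\pi_2}\mathcal I(\pi_1)\mathcal I(\pi_2)f$, and taking $\inf_{\pi_1}$ gives $\mathcal S(s)\mathcal S(t)f\le\mathcal S(s+t)f$. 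For the reverse inequality I would fix $\pi_1$, choose $\pi_2^n\in\operatorname P_t$ with $\mesh\pi_2^n\to0$ so that $\mathcal I(\pi_2^n)f\tomix\mathcal S(t)f$ by the previous step, and invoke sequential $\tau_{\mathcal M}$-continuity of $\mathcal I(\pi_1)$ to get $\mathcal I(\pi_1)\mathcal I(\pi_2^n)f\tomix\mathcal I(\pi_1)\mathcal S(t)f$. Since each $\mathcal I(\pi_1)\mathcal I(\pi_2^n)f$ is $\mathcal I$ of a single partition of $[0,s+t]$, it dominates $\mathcal S(s+t)f$; passing to the limit and then taking $\inf_{\pi_1}$ gives $\mathcal S(s)\mathcal S(t)f\ge\mathcal S(s+t)f$. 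The required sequential $\tau_{\mathcal M}$-continuity of $\mathcal I(\pi_1)$ reduces to that of each $I(r)$, which I would prove directly from Assumption \ref{ass:general}(iv) by splitting $\int(f_n-f)(\psi_r^a(x)+z)\,\nu(\d z)$ into a compact part, where $f_n\to f$ uniformly and $\psi_r^a(x)$ stays in a fixed compact set by Assumption \ref{ass:general}(i)--(ii), and a tail controlled uniformly in $(a,\nu)$ by the uniform $p$-moment bound on the balls $\cB_r^a(m)$.

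Finally, for strong continuity (iv) I would first show $\mathcal S(t)f\tomix f$ as $t\downarrow0$. The upper estimate follows from $\mathcal S(t)f\le I(t)f$ (trivial partition $\{0,t\}$) together with $I(t)f\tomix f$, the latter from Proposition \ref{lem:generator} and continuity at $0$ of the reference semigroups $T^a$ under Assumption \ref{ass:general}. For the lower estimate I would combine $I(r)\ge T(r)$ with the second-order generator estimates of the previous step, telescoped as in the convergence argument, to bound $\mathcal I(\pi)f\ge f-C_K t$ on compacts uniformly over $\pi\in\operatorname P_t$, whence $\mathcal S(t)f\ge f-C_K t\to f$ on compacts. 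Continuity at $0$ then upgrades to continuity on all of $[0,\infty)$ through the semigroup property already established: right-continuity at any $t$ is continuity at $0$ applied to $\mathcal S(t)f$, and left-continuity follows from the contraction property of $\mathcal S(\cdot)$ together with the uniform bounds, keeping all estimates within $\tau_{\mathcal M}$. Collecting (i)--(iv) shows that $\mathcal S$ is a strongly continuous monotone semigroup, and \eqref{eq:equivalent.S} was established in the first step.
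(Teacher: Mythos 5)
Your structural reductions are sound, and your direct argument for \eqref{eq:equivalent.S} (refine any $\pi\in\operatorname{P}_t$ to mesh at most $h$ and invoke monotonicity under refinement) is correct and in fact more direct than the paper's, which obtains \eqref{eq:equivalent.S} as a by-product of a dyadic construction. The genuine gap is exactly the step you flag as the ``main obstacle'' and then leave unresolved: the two-sided consistency estimate $\|I(s+r)f-I(s)I(r)f\|_{\infty,K}\le C_K\,sr$ together with its telescoping. Beyond being unproven, the telescoping strategy fails structurally. When you insert a point between $t_i$ and $t_{i+1}$ in a partition $\{t_0,\dots,t_k\}$, the consistency estimate must be applied not to $f$ but to the iterate $I(t_{i+2}-t_{i+1})\cdots I(t_k-t_{k-1})f$, which is merely Lipschitz; the constant $C_K$ necessarily involves second derivatives (already in the toy case $\psi_t^a=\operatorname{id}$, $\mu_t^a=\delta_0$, the gap $I(s+r)f-I(s)I(r)f$ is of order $sr$ times second-order quantities of the input function), and the inf-sup operators $I(t)$ do not propagate $\Cb^2$-regularity. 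This is precisely the point where the convex Chernoff-type machinery of \cite{BK22,BKN23} becomes unavailable, as the paper emphasizes. The same obstruction invalidates your lower bound $\cI(\pi)f\ge f-C_K t$ for strong continuity at $0$: note that $T(r)T(s)f\le T(r+s)f$ (the infimum over $a$ makes compositions of $T$ decrease under refinement as well), so a uniform-over-$\pi$ lower bound cannot be reduced to the semigroup property of the $T^a$; and a crude first-moment displacement bound fails too, since already for Brownian noise $\sum_i\big(\int_\X\|y\|^p\,\mu_{r_i}^{a_i}(\d y)\big)^{1/p}$ is of order $t/\sqrt{\mesh\pi}$, which blows up as the mesh vanishes.

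The paper's proof avoids any quantitative rate altogether, and this is the idea your proposal is missing. By Proposition \ref{lem:I.on.Cb}\,(iii), $\pi\mapsto\cI(\pi)f$ decreases under refinement, so along dyadic partitions the pointwise limit $\hat{\cS}(t)f$ exists for free; the entire analytic burden is shifted to showing that this limit is continuous and that the convergence holds in the mixed topology. This is done by majorizing differences of the non-convex operators via the best-case operators of Section \ref{sec:best-case}, $|\cI(\pi)f-\cI(\pi)g|\le\cJ(\pi)|f-g|$: since the $\cJ(\pi)$ are sup-sup, hence sublinear and monotone, the continuity-from-above machinery of \cite{BKN23} combined with the a priori estimate of Lemma \ref{lem:apriori.linsg} applies to them \emph{uniformly over all partitions with small mesh} (Lemmas \ref{lem:J.cont.above} and \ref{lem:right.continuity}). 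Dini's theorem then upgrades the monotone pointwise convergence to uniform convergence on compacts, the semigroup property is proved for $\hat{\cS}$ first at dyadic times and extended by left-continuity, and finally $\hat{\cS}=\cS$. Note that this uniformity over partitions is also what your extension of $\cS(t)$ from $\Lipb$ to $\Cb$ requires: sup-norm contractivity plus $\tau_{\mathcal M}$-density of $\Lipb$ is not enough, and sequential $\tau_{\mathcal M}$-continuity of each individual $I(r)$, proved operator by operator, yields constants that a priori deteriorate with the number of composition steps.
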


    The proof of Theorem \ref{thm:semigroup} can be found in Section \ref{sec:proofs.DRO.multi}. In the following, we briefly describe the key points of the proof.
    Since the properties (i) and (ii) in Definition \ref{def:semigroup} immediately follow from the definition of $\cS$, the proof of Theorem \ref{thm:semigroup} mainly focuses on verifying the semigroup property and the strong continuity of $\cS$. Proving the semigroup property of $\cS$ hinges on the definition of an auxiliary operator $\hat{\cS} \colon \Cb \to \Cb$, see \eqref{eq:S.dyadic}, which represents the multi-period distributionally robust optimization over (almost) dyadic partitions. Leveraging the monotonicity of the operator~$I$ with respect to refining partitions, see Proposition \ref{lem:I.on.Cb}~(iii), and the continuity properties of the best-case maximization problem, which is studied in detail in Section \ref{sec:best-case}, it is possible to show the semigroup property for the operator $\hat{\cS}$ and its strong continuity. In a second step, it is then shown that $\hat{\cS} = \cS$. The restriction to partitions $\pi \in \operatorname{P}_t$ with $\mesh \pi \le h$ in \eqref{eq:equivalent.S}, for arbitrary $h > 0$, then follows from the definition of the operator~$\hat{\cS}$ and its representation as a multi-period optimization over arbitrarily fine dyadic partitions, see \eqref{eq:S.dyadic.inf}.

    We next aim to characterize the infinitesimal generator of the strongly continuous monotone semigroup $\mathcal{S}$, which is defined as follows. 
    \begin{definition}\label{def:semigroup_infinitesimal}
        Let ${S}$ be a strongly continuous monotone semigroup on $\Cb$.\
        Then, the {\it infinitesimal generator} $L\colon D(L)\subset \Cb\to \Cb$ of $S$ is defined by
        \begin{align*}
            D(L)&:=\bigg\{f\in\Cb\,\bigg|\,\,\lim_{t \downarrow 0} \frac{S(t)f- f}{t}\in \Cb\,\mbox{ exists w.r.t.\ the mixed topology}\bigg\},\\
            {L}f&:=\lim_{t \downarrow 0} \frac{{S}(t)f - f}{t}\qquad \mbox{for $f\in D({L})$}.
        \end{align*}
    \end{definition}
    \begin{remark}\label{rem:infinitesimal}
        We consider the following infinitesimal generators in the sense of Definition \ref{def:semigroup_infinitesimal}. 
        \begin{enumerate}
            \item [(i)] Observe that, by Assumption \ref{ass:general} and Theorem \ref{thm:semigroup} (for the special case $m=0$ and $A=\{a\}$),
        the reference semigroup $T^a$ in \eqref{eq:ref_sg} is a strongly continuous monotone semigroup on~$\Cb$ for every~$a\in A$.\ We denote its infinitesimal generator by $\mathscr{L}^a\colon D(\mathscr{L}^a)\subset \Cb \to \Cb$ for all $a\in A$. 
            \item [(ii)] We denote the infinitesimal generator of the strongly continuous monotone semigroup $\mathcal{S}$ in \eqref{eq:def_S}, see Theorem \ref{thm:semigroup}, by $\mathscr{L}\colon D(\mathscr{L})\subset \Cb\to \Cb$.\
        \end{enumerate}
    \end{remark}
	
    The following theorem, which is the main result of this section, provides an explicit description of the infinitesimal generator $\mathscr{L}$ of the strongly continuous monotone semigroup $\mathcal{S}$. 
	\begin{theorem}\label{thm:semigroup.generator}
		Suppose that Assumption \ref{ass:general} holds, and let $(\mathscr{L}^a)_{a\in A}$ and $\mathscr{L}$ be the infinitesimal generators defined in Remark~\ref{rem:infinitesimal}. Then, for every $f \in \cap_{a\in A} D(\mathscr{L}^a)\cap \Cb^1$ with
        \begin{align} \label{eq:ass.gen.bound}
            &\sup_{a \in A} \|\mathscr{L}^a f\|_\infty < \infty\quad \text{and}\\
            &\lim_{t \downarrow 0} \sup_{a \in A} \sup_{x \in K} \bigg| 
            \frac{T^a(t)f(x)-f(x)}{t}-(\mathscr{L}^a f)(x) 
            \bigg| = 0 \quad \mbox{for all $K \Subset \X$},
            \label{eq:ass.generator}
        \end{align}
		it holds that $f\in D(\mathscr{L})$ and 
		\begin{align*}
		\mathscr{L}f = \inf_{a \in A}\mathscr{L}^a f + m \|\nabla f(\,\cdot\,)\|.
		\end{align*}
	\end{theorem}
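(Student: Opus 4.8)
The plan is to prove the two one-sided estimates
\[
\limsup_{t\downarrow 0}\frac{\cS(t)f-f}{t}\;\le\;\inf_{a\in A}\mathscr{L}^a f+m\|\nabla f(\,\cdot\,)\|\;\le\;\liminf_{t\downarrow 0}\frac{\cS(t)f-f}{t},
\]
all limits being understood in the mixed topology $\tau_{\mathcal M}$. Writing $g:=\inf_{a\in A}\mathscr{L}^a f+m\|\nabla f(\,\cdot\,)\|$, note first that $g\in\Cb$ with $\|g\|_\infty\le \sup_{a}\|\mathscr{L}^a f\|_\infty+m\|\nabla f\|_\infty<\infty$ by \eqref{eq:ass.gen.bound}. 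By definition of $\tau_{\mathcal M}$-convergence in \eqref{eq:mix_top}, membership $f\in D(\mathscr{L})$ requires both a uniform $\|\cdot\|_\infty$-bound $\sup_{t\in(0,1]}\|t^{-1}(\cS(t)f-f)\|_\infty<\infty$ and uniform convergence on every $K\Subset\X$. The sup-bound follows from the a priori estimate \eqref{eq.le.gen.1} (controlling $t^{-1}(I(t)f-T(t)f)$ by $m\|\nabla f\|_\infty$) together with the contractivity of the linear reference semigroups and their Dynkin representation $T^a(t)f-f=\int_0^t T^a(r)\mathscr{L}^a f\,\d r$, which gives $\sup_a\|T^a(t)f-f\|_\infty\le t\sup_a\|\mathscr{L}^a f\|_\infty$ via \eqref{eq:ass.gen.bound}; hence below I only track uniform convergence on compacts.

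For the upper bound I use that the coarsest partition $\{0,t\}$ is admissible, so $\cS(t)f\le I(t)f$ and
\[
\frac{\cS(t)f-f}{t}\;\le\;\frac{I(t)f-T(t)f}{t}+\frac{T(t)f-f}{t}.
\]
The first term converges to $m\|\nabla f(\,\cdot\,)\|$ in $\tau_{\mathcal M}$ by Proposition~\ref{lem:generator}. For the second, since $f$ is independent of $a$ we have $t^{-1}(T(t)f-f)=\inf_{a\in A}t^{-1}(T^a(t)f-f)$, and because the infimum is $1$-Lipschitz for the supremum norm, $\sup_{x\in K}\big|t^{-1}(T(t)f-f)(x)-\inf_a\mathscr{L}^a f(x)\big|\le\sup_{x\in K}\sup_a\big|t^{-1}(T^a(t)f-f)(x)-\mathscr{L}^a f(x)\big|\to0$ by hypothesis \eqref{eq:ass.generator}. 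Adding the two limits yields $\limsup_{t\downarrow0}t^{-1}(\cS(t)f-f)\le g$ uniformly on compacts.

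The lower bound is the core of the argument, since the inequality $\cS(t)\le I(t)$ points the wrong way and one must bound $\cI(\pi)f$ from below \emph{uniformly over all partitions}. Using the representation \eqref{eq:equivalent.S}, it suffices to show that for every $\eps>0$ there are $h,t_0>0$ such that $\cI(\pi)f\ge f+tg-\eps t$ on $K$ for all $t\le t_0$ and all $\pi=\{t_0,\dots,t_k\}\in\operatorname{P}_t$ with $\mesh\pi\le h$. Telescoping expresses $\cI(\pi)f-f$ as a sum of one-step increments $I(s_i)w_i-w_i$, where each tail $w_i$ is a composition of operators $I(\cdot)$ applied to $f$; by Proposition~\ref{lem:I.on.Cb}(i)--(ii) these obey $\|w_i-f\|_\infty\le Ct$ and $\|w_i\|_{\Lip}\le e^{ct}\|f\|_{\Lip}$. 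The plan is to establish a one-step lower estimate of the form $I(s)w-w\ge s\,g-s\,\omega(s)-\eta(\|w-f\|_\infty)$ that retains the full first-order term: the drift--diffusion part is supplied by \eqref{eq:ass.generator} applied to the reference semigroups together with the $1$-Lipschitzness of $\inf_a$, while the Wasserstein part $m\|\nabla f(\,\cdot\,)\|$ is recovered by feeding into the inner supremum the competitor obtained by transporting $\mu_s^a$ along the gradient direction of $f$, exactly as in the proof of Proposition~\ref{lem:generator}. Summing these increments, absorbing the errors through the contraction property of $I$, and letting $\mesh\pi\to0$ then produces $\cS(t)f\ge f+tg-\eps t$, hence $\liminf_{t\downarrow0}t^{-1}(\cS(t)f-f)\ge g$.

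The main obstacle is making the one-step lower estimate genuinely uniform over the iterates $w_i$: these are controlled only in the Lipschitz norm, not in $\Cb^1$, so the first-order value $g$ computed from $f$ cannot be read off from $\nabla w_i$, and the errors incurred when transferring the generator of $w_i$ to that of $f$ must be shown to sum to $o(t)$ uniformly in $\pi$. I expect to resolve this by propagating a quantitative regularity estimate along the iteration---specifically a uniform semiconcavity bound for the compositions, which together with the uniform Lipschitz bound controls the modulus of $\nabla w_i-\nabla f$ where $f$ is differentiable---and by comparing with the auxiliary best-case semigroup of Section~\ref{sec:best-case}, whose convexity makes the first-order increment stable under iteration. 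This regularity-propagation step, rather than either asymptotic computation, is where the technical weight of the proof lies.
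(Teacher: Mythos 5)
Your upper bound and your one-period computation (the limit $\lim_{t\downarrow0}t^{-1}(I(t)f-f)=\inf_{a\in A}\mathscr{L}^af+m\|\nabla f(\,\cdot\,)\|$, obtained from Proposition \ref{lem:generator}, the $1$-Lipschitzness of the infimum, hypothesis \eqref{eq:ass.generator}, and the sup-norm bound via the Dynkin representation and \eqref{eq:ass.gen.bound}) coincide with the paper's Step 1 and are correct. The genuine gap is in your lower bound. You telescope $\cI(\pi)f-f$ in the \emph{tail} direction, producing one-step increments $I(s_i)w_i-w_i$ in which $w_i$ is itself a composition of the operators $I$ applied to $f$; as you yourself note, $w_i$ lies only in $\Lipb$, not in $\Cb^1$, so the first-order asymptotics cannot be read off. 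Your proposed repair --- propagating a uniform semiconcavity bound along the iteration to control $\nabla w_i-\nabla f$ --- is established nowhere in the paper and is genuinely doubtful: the inner supremum over the Wasserstein ball is a pointwise supremum, which preserves semiconvexity rather than semiconcavity, while the outer infimum over $a$ destroys semiconvexity, so there is no reason the iterates $w_i$ obey any quantitative second-order estimate. The decisive step of your plan is therefore left open, and the route chosen makes it hard to close.

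The missing idea is to telescope in the opposite (head) direction, which is what the paper does: for $\pi=\{t_0,\dots,t_k\}\in\operatorname{P}_t$,
\[
\cI(\pi)f-f=\sum_{i=0}^{k-1}\Big(\cI(\{t_0,\dots,t_i\})I(t_{i+1}-t_i)f-\cI(\{t_0,\dots,t_i\})f\Big),
\]
so that the one-step operator is \emph{always applied to the fixed smooth function $f$}, and the outer composition merely transports the increment $I(t_{i+1}-t_i)f-f$. Since $\cI(\rho)g-\cI(\rho)h\ge-\cJ(\rho)(h-g)$ by monotonicity and sublinearity of the best-case operators (Remark \ref{rem:J}), positive homogeneity of $\cJ$ and the fact that the weights $(t_{i+1}-t_i)/t$ sum to one yield
\[
\frac{\cS(t)f-f}{t}\;\ge\;-\sup_{\max\pi\le t}\,\sup_{h\in(0,t]}\,\cJ(\pi)\bigg(\frac{f-I(h)f}{h}\bigg).
\]
The lower bound then follows by combining the one-period limit of Step 1 with Lemma \ref{lem:right.continuity} (continuity of $\cJ(\pi)$, uniformly over partitions with small $\max\pi$, on compacts), and no regularity of the iterates is ever needed. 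Your closing remark about comparing with the best-case semigroup of Section \ref{sec:best-case} points in this direction, but that comparison only becomes effective after the telescoping is reversed; as written, your argument cannot be completed.
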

	The proof of Theorem \ref{thm:semigroup.generator} is presented in Section \ref{sec:proofs.DRO.multi}. The main challenge is to show that the generator of the semigroup actually coincides with the derivative $\lim_{t \downarrow 0} \frac{I(t)f - f}{t}$ of the one-period operators at zero for functions $f$ as in Theorem \ref{thm:semigroup.generator}. The result is then a direct consequence of Proposition \ref{lem:generator}, splitting the generator of the DRO into the sum of the generator of the non-robust dynamics and a component arising from uncertainty according to
    \[
    \frac{I(t)f - f}{t} = \frac{T(t)f - f}{t} + \frac{I(t)f - T(t)f}{t}.
    \]    
    We briefly discuss the uniformity conditions \eqref{eq:ass.gen.bound} and \eqref{eq:ass.generator}.\ Both conditions resemble boundedness in the way the action enters the controlled dynamics.\ In practice, Condition \eqref{eq:ass.generator} can be verified using, for example, It\^o's lemma.\ Alternatively, using \cite[Proposition 2.4 (2)]{kunze}, Condition \eqref{eq:ass.generator} is equivalent to
    \begin{equation}\label{eq.explanation}
     \lim_{t\downarrow 0}\sup_{a\in A}\sup_{x\in K}\bigg|\E\bigg[\frac1t\int_0^t (\mathscr L^af)(X_s^{x,a})-(\mathscr L^af)(x)\, \d s\bigg]\bigg|=0 \quad \mbox{for all $K \Subset \X$},
    \end{equation}
    where $\mathscr L^a$ is the infinitesimal generator of the Markov process $X^a$ associated with the action, i.e., constant control $a\in A$.\ If, additionally, $\mathscr L^a f\in D(\mathscr L^a)$ for all $a\in A$ and
    \begin{equation}\label{eq.explanation2}
    \sup_{a\in A}\big\| \mathscr L^a\mathscr L^af\big\|_\infty<\infty,
    \end{equation}
    then \eqref{eq.explanation} yields
    \[
    \sup_{a\in A}\sup_{x\in K}\bigg|\E\bigg[\frac1t\int_0^t (\mathscr L^af)(X_s^{x,a})-(\mathscr L^af)(x)\, \d s\bigg]\bigg|\leq \frac{t}{2} \sup_{a\in A}\big\| \mathscr L^a\mathscr L^af\big\|_\infty\quad \text{for all }t>0.
    \]
    Hence, \eqref{eq:ass.generator} is satisfied in this case.\ Observe that $\mathscr L^a f\in D(\mathscr L^a)$ for all $a\in A$ corresponds to higher regularity of $f$ and \eqref{eq.explanation2} again resembles a uniformity over the controls.
    \section{Connection to nonlinear expectations and robust optimization under drift uncertainty.}\label{sec:apply}
    
    In this section, we connect the strongly continuous monotone semigroup $\mathcal{S}$, obtained as the scaling limit of the multi-period DRO problems, see Theorem\;\ref{thm:semigroup}, with nonlinear PDEs of Hamilton-Jacobi-Bellman-Isaacs-type that arise from nonlinear expectations and robust optimization problems under drift uncertainty.

    \subsection{Nonlinear PDEs arising from the semigroup $\cS$ and their viscosity solutions}
    In order to establish the connection between the semigroup $\cS$ and a nonlinear PDE, the infinitesimal generator $\mathscr{L}$ of $\cS$, see Remark\;\ref{rem:infinitesimal} (ii), plays a crucial role. For given $u_0\in \Cb$, we consider the nonlinear PDE
    \begin{align}\label{eq:viscosity}
        \left\{
        \begin{aligned}
        &\partial_tv(t,x) = \mathscr{L} v(t,x)\quad &&\mbox{for all }(t,x)\in (0,\infty)\times \X,\\
        &v(0,x)= u_0(x)\quad && \mbox{for all }x\in \X.
        \end{aligned}
        \right.
    \end{align}

    In what follows, we introduce the notion of a viscosity solution to \eqref{eq:viscosity}, as considered in \cite{fuchs2025existence}.
    \begin{definition}\label{def:D.M.viscosity}
        Let $u_0\in \Cb$ and $D \subseteq D(\mathscr{L})$ be nonempty. 
        We say that
        $v\colon [0,\infty)\times \X \to \mathbb{R}$ is a $D$-viscosity subsolution or supersolution to \eqref{eq:viscosity} if $v\in \Cb([0,\infty)\times \X)$, $v(0,x)= u_0(x)$ for all $x\in \X$ and, for all $(t,x)\in (0,\infty)\times \X$ and $\varphi\in \Cb^1((0,\infty)\times \X)$ with $\varphi(t,\cdot)\in D$, $v(t, x)=\varphi(t, x)$, and $v \leq \varphi$ or $v\geq \varphi$, it holds
        \[
        \quad \partial_t\varphi(t,x)\leq \mathscr{L}\varphi(t, x)\quad \mbox{or}\quad\partial_t\varphi(t, x)\geq \mathscr{L}\varphi(t, x),\quad\text{respectively}.\footnote{We write $\Cb([0,\infty)\times \X)$ for the set of all continuous bounded functions on $[0,\infty)\times \X$. Moreover, we write $\Cb^1((0,\infty)\times \X)$ for the set of all bounded functions on $(0,\infty)\times \X$ having bounded continuous derivative.}
        \]
        We say $v$ is a $D$-viscosity solution to \eqref{eq:viscosity} if it is both a $D$-viscosity sub- and supersolution.
    \end{definition}

    If the set $D$ contains $\Cb^2$ as a subset, then the notion of $D$-viscosity solutions in Definition \ref{def:D.M.viscosity} is stronger than classical notions of viscosity solutions presented, for example, in \cite{crandall1992user,crandall1983viscosity,fleming2006controlled,neufeld2017nonlinear} and slightly weaker than the notion of a $D$-$M$ viscosity solution, considered in \cite{fuchs2025existence}.
    For a detailed discussion, we refer to \cite[Remark 2.11]{fuchs2025existence}.

    In order to establish the existence of a $D$-viscosity solution to the PDE in \eqref{eq:viscosity}, we consider the following property for operators on $\Cb$, introduced in \cite{fuchs2025existence}.
        
        \begin{definition}\label{def:K_convex}
        Let $K:=(K(t))_{t\geq 0}$ be a family of operators $\Cb\to\Cb$. A family $S:=(S(t))_{t\geq 0}$ of operators $\Cb\to\Cb$ is called {\it $K$-convex} if for all $\lambda \in [0,1]$, $f,g\in \Cb$, and  $t\geq 0$,
        \[
            S(t)\big(\lambda f+ (1-\lambda)g\big)\leq \lambda S(t)f+ (1-\lambda)K(t)g.
        \]
    \end{definition}    
    
    \begin{theorem}\label{cor:viscosity} Suppose that Assumption \ref{ass:general} holds.\
    Let $\mathcal{S}$ be the strongly continuous monotone semigroup on $\Cb$ defined in~\eqref{eq:def_S}, $\mathscr{L}$ be its infinitesimal generator, see Remark~\ref{rem:infinitesimal} (ii), and $D\subseteq D(\mathscr{L})$ be nonempty. Then, for every $u_0\in \Cb$, the function $v \colon [0,\infty)\times \X\to \mathbb{R}$, defined~by
    \begin{align}\label{eq:viscosity_semi}
    v(t,x):=\big(\mathcal{S}(t)u_0\big)(x)\quad\text{for all }t\geq 0\text{ and }x\in \X,
    \end{align}
     is a $D$-viscosity solution to the PDE \eqref{eq:viscosity}.
    \end{theorem}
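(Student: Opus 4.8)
The plan is to deduce the statement from the abstract existence result for $D$-viscosity solutions in \cite{fuchs2025existence}, whose hypotheses require the underlying family of operators to be a strongly continuous monotone semigroup on $\Cb$ that is, in addition, $K$-convex in the sense of Definition \ref{def:K_convex} for a suitable family $K$. Strong continuity and monotonicity of $\cS$ are already furnished by Theorem \ref{thm:semigroup}, and the identity $v(0,\cdot)=u_0$ is immediate from $\cS(0)f=f$. For the regularity $v\in\Cb([0,\infty)\times\X)$, I would combine the contraction property inherited from Proposition \ref{lem:I.on.Cb}~(i) (which, since $I(t)$ leaves constants invariant, yields $\|\cS(t)u_0\|_\infty\le\|u_0\|_\infty$) with the strong continuity of $t\mapsto\cS(t)u_0$ in the mixed topology, which on $\|\cdot\|_\infty$-bounded sets coincides with uniform convergence on compacts; this gives boundedness and joint continuity in $(t,x)$. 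The only substantive task is then to exhibit a family $K$ for which $\cS$ is $K$-convex.

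The natural candidate is the best-case semigroup $\bar{\cS}$ studied in Section \ref{sec:best-case}, obtained by replacing every infimum over the action set $A$ with a supremum. Writing the one-step best-case operator as $\bar I(t)f:=\sup_{a\in A} I^a(t)f$ with $I^a$ as in \eqref{eq:def.Ia}, and $\bar{\cI}(\pi)$, $\bar{\cS}(t)$ for the corresponding multi-period and scaling-limit operators, I would first observe that each $I^a(t)$ is convex on $\Cb$, being a supremum over $\nu\in\cB_t^a(m)$ of the affine maps $f\mapsto\int f(\psi_t^a(\cdot)+z)\,\nu(\d z)$. Using this convexity together with the elementary bound $I^a(t)g\le\bar I(t)g$ valid for every $a\in A$, one obtains, for all $\lambda\in[0,1]$ and $f,g\in\Cb$,
\[
I(t)\big(\lambda f+(1-\lambda)g\big)\le\inf_{a\in A}\big(\lambda I^a(t)f+(1-\lambda)I^a(t)g\big)\le\lambda\,I(t)f+(1-\lambda)\,\bar I(t)g,
\]
so that the one-step operator $I(t)$ is $\bar I(t)$-convex.

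Next I would propagate this one-step estimate through the construction of $\cS$. Using the monotonicity of $I(s)$ from Proposition \ref{lem:I.on.Cb}~(i) together with the $\bar I(s)$-convexity just established, an induction over the points of a partition $\pi=\{t_0,\dots,t_k\}$ shows that $\cI(\pi)$ is $\bar{\cI}(\pi)$-convex, i.e.
\[
\cI(\pi)\big(\lambda f+(1-\lambda)g\big)\le\lambda\,\cI(\pi)f+(1-\lambda)\,\bar{\cI}(\pi)g.
\]
Taking the infimum over $\pi\in\operatorname{P}_t$ on the left, and bounding $\bar{\cI}(\pi)g\le\bar{\cS}(t)g$ on the right, then yields $\cS(t)(\lambda f+(1-\lambda)g)\le\lambda\,\cS(t)f+(1-\lambda)\,\bar{\cS}(t)g$, which is exactly $\bar{\cS}$-convexity of $\cS$. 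It remains to invoke the analysis of Section \ref{sec:best-case}, which guarantees that $\bar{\cS}$ is itself a strongly continuous monotone semigroup and hence an admissible choice of $K$, so that the existence theorem of \cite{fuchs2025existence} applies and identifies $v$ as a $D$-viscosity solution.

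I expect the main obstacle to lie not in the algebra of the $K$-convexity estimates, which is driven entirely by the convexity of $f\mapsto I^a(t)f$ and the crude bound $I^a(t)\le\bar I(t)$, but in ensuring that the best-case family $\bar{\cS}$ possesses precisely the regularity (the status of a genuine strongly continuous monotone semigroup, with the appropriate generator/domain structure) demanded of the family $K$ by the abstract framework, and in matching the present notion of $D$-viscosity solution and the domain condition $D\subseteq D(\mathscr{L})$ to that framework. This is exactly why the detailed study of the best-case optimization in Section \ref{sec:best-case} is indispensable to the argument.
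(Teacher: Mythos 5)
Your overall strategy coincides with the paper's: prove that $\cS$ is $K$-convex for a ``best-case'' comparison family built from $J$, and then invoke \cite[Theorem~3.1]{fuchs2025existence}. The parts you carry out in detail are correct and match the paper's argument: convexity of each $I^a(t)$ (a supremum of affine maps) gives the one-step estimate $I(t)(\lambda f+(1-\lambda)g)\le\lambda I(t)f+(1-\lambda)J(t)g$, and an induction along the partition points using monotonicity of $I$ yields $\cI(\pi)(\lambda f+(1-\lambda)g)\le\lambda\,\cI(\pi)f+(1-\lambda)\,\cJ(\pi)g$, from which the $K$-convexity of $\cS$ follows by passing to the infimum over partitions.

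The genuine gap is in your final step, where you set $K(t)=\bar{\cS}(t):=\sup_{\pi\in\operatorname{P}_t}\cJ(\pi)$ and assert that ``the analysis of Section~\ref{sec:best-case} guarantees that $\bar{\cS}$ is itself a strongly continuous monotone semigroup.'' Section~\ref{sec:best-case} proves no such thing, and there is a concrete obstruction to proving it: the refinement monotonicity that drives the entire scaling-limit construction, Proposition~\ref{lem:I.on.Cb}~(iii), has no analogue for $J$. For fixed $a$ one has $I^a(s)I^a(t)f\le I^a(s+t)f$ by \cite[Lemma~3.10]{nendel2021wasserstein}, but after taking the outer supremum over $a$ both $J(s)J(t)f$ and $J(s+t)f$ merely dominate $\sup_{a\in A}I^a(s)I^a(t)f$, and no inequality between them follows. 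Consequently $\cJ(\pi)$ is not monotone along refinements, the supremum over all partitions is not a monotone limit, and not even the continuity of $\sup_{\pi\in\operatorname{P}_t}\cJ(\pi)g$ for general $g\in\Cb$ is available: Lemma~\ref{lem:J.cont.above} only provides equicontinuity over partitions with \emph{small mesh}, so it does not pass to a supremum over all of $\operatorname{P}_t$, and hence your $\bar{\cS}(t)$ may fail to map $\Cb$ into $\Cb$, which already breaks the definition of $K$-convexity. Fortunately, none of this is needed: \cite[Theorem~3.1]{fuchs2025existence} does not require $K$ to be a semigroup, only a family of operators $\Cb\to\Cb$ that is strongly right-continuous in the sense of \cite[Definition~2.7]{fuchs2025existence}. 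This is precisely why the paper takes $K(t)f:=\lim_{h\downarrow0}\sup_{\pi\in\operatorname{P}_t,\,\mesh\pi\leq h}\cJ(\pi)f$ as in \eqref{eq.def.K}: the mesh restriction makes Lemmas~\ref{lem:J.cont.above} and~\ref{lem:right.continuity} applicable, and Lemma~\ref{lem:Kconvex} then delivers both well-definedness of $K(t)\colon\Cb\to\Cb$ and the required right-continuity. Your $K$-convexity computation goes through verbatim for this $K$ once you restrict to small-mesh partitions via \eqref{eq:equivalent.S}; with that single replacement, your argument becomes the paper's proof.
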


    The proof of Theorem \ref{cor:viscosity}, which is contained in Section \ref{sec:proofs.sec.appl}, is based on the construction of a suitable family $K:=(K(t))_{t\geq 0}$ of operators $\Cb\to\Cb$ in order to apply \cite[Theorem 3.1]{fuchs2025existence} to establish the existence of a $D$-viscosity solution to the PDE \eqref{eq:viscosity}.

    \begin{remark} Let $D\subset\bigcap_{a\in A}D(\mathscr{L}^a)\cap\Cb^1$ be a non-empty subset of functions satisfying \eqref{eq:ass.gen.bound} and \eqref{eq:ass.generator}. Then, by Theorem \ref{thm:semigroup.generator} and Theorem~\ref{cor:viscosity}, the function $v\colon [0,\infty)\times \X\to  \R$ defined in \eqref{eq:viscosity_semi} is a $D$-viscosity solution~to 
            \begin{align*}
        \hspace{3.em} \left\{
        \begin{aligned}
            &\partial_tv(t,x)=\inf_{a \in A}\mathscr{L}^a v(t,x) + m \|\nabla v(t,x)\|\quad &&\text{for all }(t,x)\in(0,\infty)\times \X, \\
            &v(0,x)=u_0(x) &&\text{for all }x\in \X.
        \end{aligned}
        \right.
        \end{align*}
    \end{remark}

    \subsection{Connection to $g$-expectation}
    The result from the preceding section allows us to establish a connection between the semigroup $\mathcal{S}$ and the $g$-expectation framework, see \cite{peng1997backward,coquet2002filtration}, which we discuss in the sequel.

    Let $(\Omega,\mathcal{F}, \mathbb{P})$ be a complete probability space, $W:=(W_t)_{t\geq 0}$ be a $d$-dimensional standard Brownian motion with its Brownian standard filtration $\mathbb{F}:=(\mathcal{F}_t)_{t\geq0}$, i.e., the natural filtration generated by $W$ augmented by all $\mathbb P$-null sets, and $T>0$ be a finite time horizon.\
    For any probability measure $\mathbb{Q}$ on $(\Omega,\mathcal{F}_{T})$, we write $\mathbb{E}^{\mathbb{Q}}[\,\cdot\,]$ for the expectation under $\mathbb{Q}$. 
    \begin{definition}\label{dfn:g_exp}
        We define the $g$-expectation $\mathcal{E}\colon  L^2(\mathcal{F}_{T})\ni \xi\mapsto \mathcal{E}[\xi]\in \mathbb{R}$~by\footnote{We write $L^2(\mathcal{F}_{T})$ for the set of all real-valued, $\mathcal{F}_{T}$-measurable random variables $\xi$ with $\mathbb{E}^{\mathbb{P}}[|\xi|^2]<\infty$,  $\mathbb{L}^2(\mathbb{R}^d)$ for the set of all $\mathbb{R}^d$-valued, $\mathbb{F}$-predictable processes $Z=(Z_t)_{t\in[0,T]}$ with $\mathbb{E}^{\mathbb{P}}[\int_0^{T_1}\|Z_t\|^2\,\d t]<\infty$, and $\mathbb{S}^2(\mathbb{R})$ for the set of all real-valued, $\mathbb{F}$-progressively measurable c\`adl\`ag (i.e., right-continuous with left limits) processes $Y=(Y_t)_{t\in[0,T]}$ with $\mathbb{E}^{\mathbb{P}}[\sup_{t\in[0,T]}|Y_t|^2]<\infty$.}
	\begin{align*}
		\mathcal{E}[\xi]:=Y_0,
	\end{align*}
	where $(Y,Z)\in \mathbb{S}^2(\mathbb{R})\times \mathbb{L}^2(\mathbb{R}^d)$ is the unique solution to the backward stochastic differential equation (BSDE), see \cite[Theorem~3.1]{pardoux1990adapted},
	\begin{align*}
		Y_t= \xi + \int_t^{T} m\|Z_s\| \,\d s -\int_t^{T} Z_s^\top \, \d W_s.
	\end{align*}
    Moreover, its conditional $g$-expectation with respect to $\mathcal{F}_t$ is defined by
	\[
	\mathcal{E}[\xi \,|\, \mathcal{F}_t]:=Y_t\quad \mbox{for all }t\in [0,T].
	\]
    \end{definition}

    \begin{remark}\label{rem:g_exp}
	By \cite[Proposition 3.6]{el1997backward}, the $g$-expectation in Definition \ref{dfn:g_exp} satisfies the following representation for every  $\xi \in L^2(\mathcal{F}_T)$ and $t\in[0,{T}]$:
	\begin{align}\label{eq:g_exp_vari}
	   \mathcal{E}[\xi \,| \, \mathcal{F}_t] = \esssup_{\vartheta \in \mathcal{C}(m)} \mathbb{E}^{\mathbb{P}^\vartheta}[\xi \,| \, \mathcal{F}_t],
	\end{align}
	where $\mathcal{C}(m)$ is the set of all $\mathbb{R}^d$-valued, $\mathbb{F}$-progressively measurable processes $\vartheta=(\vartheta_t)_{t\in[0,T]}$ with $|\vartheta_t|\leq m$ $\mathbb{P}\otimes \d t$-a.e., and  for each $\vartheta\in \mathcal{C}(m)$ the probability measure $\mathbb{P}^\vartheta$ on $(\Omega,\mathcal{F}_{T})$ is defined by
	\[
		\frac{\d \mathbb{P}^\vartheta}{\d \mathbb{P}|_{\mathcal{F}_{T}}}:= \exp\bigg(-\frac{1}{2}\int_0^{T}\|\vartheta_s\|^2\,\d s+\int_0^{T}\vartheta_s^\top \,\d W_s\bigg).
	\]
    By the variational representation in \eqref{eq:g_exp_vari}, the conditional $g$-expectation $(\mathcal{E}[\,\cdot\,|\mathcal F_t])_{t\in [0,T]}$
    is often considered to be a dynamic risk measure, see, e.g., \cite{hernandez2007control,schied2006risk, maccheroni2006ambiguity,schied2007optimal,talay2002worst,park2022robust,park2023robust}.
    \end{remark}

    We now consider the following $g$-expectation value function defined w.r.t.\ a given $h\in \Cb$. Set
    \begin{align}\label{eq:value_g_exp}
        V(t,x):= \mathcal{E}\big[h\big( X_{T}^{t,x}\big)\,\big|\,\mathcal{F}_t\big],\quad \mbox{for $(t, x)\in[0,T_1]\times \mathbb{R}^d$,}
    \end{align}
    where
     $X_T^{t,x}:=
	 x+ b(T-t) +(W_T-W_t)$
    with  $ b\in \X$. 

    The following corollary establishes the connection between the strongly continuous monotone semigroup $\cS$ with the $g$-expectation value in \eqref{eq:value_g_exp}. The proof can be found in Section\;\ref{proof:apdx:cor:g_exp}.
    
    \begin{corollary}\label{cor:g_exp}
        Let $(\psi^a)_{a\in A}$ and $(Y^a)_{a\in A}$ be given by \eqref{eq:eg_psi} and \eqref{eq:ito_diffusion} in Example \ref{lem:examples} a), respectively, with $b(a):= b\in \X$ and $\sigma(a):=I_d$, where $I_d$ is the identity matrix in $\mathbb{R}^{d\times d}$.\ Then, the following hold:
        \begin{itemize}
            \item [(i)] The inclusion $\Cb^2\subseteq D(\mathscr{L}^a)$ holds for all $a\in A$, and every $f\in\Cb^2$ satisfies \eqref{eq:ass.gen.bound} and \eqref{eq:ass.generator}.
            \item [(ii)] The value function $V$ in \eqref{eq:value_g_exp} coincides with the function $v\colon [0,T]\times \X\to \mathbb{R}$, defined by
            \begin{align}\label{eq:viscosity_g_exp_value}
            v(t,x):=(\cS(T-t)h)(x)\quad \text{for all }(t,x)\in [0,T]\times \X,
            \end{align}
            and $V=v$ is the unique viscosity solution\footnote{\label{footnote:viscosity} In the classical sense of \cite{crandall1992user,crandall1983viscosity,fleming2006controlled}.} 
            to the following PDE:
            \begin{align}\label{eq:viscosity_g_exp}
            \left\{
                \begin{aligned}
                    &\partial_t v(t,x)+\frac{1}{2}\Delta v(t,x) + \langle b, \nabla v(t,x) \rangle+ m\|\nabla v(t,x)\|=0\quad &&\text{for all }(t,x)\in[0,T)\times \X,\\
                    &v(T,x)=h(x) &&\text{for all } x\in \X.
                \end{aligned}
            \right.
            \end{align}
        \end{itemize}
    \end{corollary}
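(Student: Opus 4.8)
The plan is to exploit the special structure of the data: with $b(a)\equiv b$ and $\sigma(a)\equiv I_d$, every ingredient is independent of $a$, since $\psi_t^a(x)=x+bt$ and $\mu_t^a=\mathcal N(0,I_d t)$ for all $a\in A$. Hence the reference process is a single Brownian motion with drift $b$, and the infima over $A$ in $\cS$ and in $\mathscr{L}$ are vacuous. This makes the uniform-in-$a$ requirements \eqref{eq:ass.gen.bound}--\eqref{eq:ass.generator} automatic and reduces the whole statement to a classical generator computation together with a Feynman--Kac identification.

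For part (i), I would compute $\mathscr{L}^a$ directly. Writing $X_s^x:=x+bs+W_s$, It\^o's formula applied to $f\in\Cb^2$ gives
\[
T^a(t)f(x)=\E\big[f(X_t^x)\big]=f(x)+\int_0^t \E\Big[\big(\tfrac12\Delta f+\langle b,\nabla f\rangle\big)(X_s^x)\Big]\,\d s,
\]
the stochastic integral vanishing in expectation because $\nabla f$ is bounded. Thus $\tfrac{T^a(t)f-f}{t}$ is the time-average over $[0,t]$ of $s\mapsto \E[(\mathscr{L}^a f)(X_s^{\,\cdot})]$ with $\mathscr{L}^a f:=\tfrac12\Delta f+\langle b,\nabla f\rangle\in\Cb$. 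Boundedness $\|\mathscr{L}^a f\|_\infty\le \tfrac12\|\nabla^2 f\|_\infty+\|b\|\,\|\nabla f\|_\infty$ yields a uniform bound on the difference quotients, giving \eqref{eq:ass.gen.bound}; and since $\mathscr{L}^a f\in\Cb$ while $X_s^x\to x$ locally uniformly as $s\downarrow 0$---splitting the expectation on $\{\|W_s\|\le R\}$ and its complement to bypass the lack of global uniform continuity of $\mathscr{L}^a f$---the time-average converges to $(\mathscr{L}^a f)(x)$ uniformly on each compact, giving \eqref{eq:ass.generator} and mixed-topology convergence. As all quantities are $a$-independent, the suprema over $A$ are trivial, so $\Cb^2\subseteq D(\mathscr{L}^a)$ for every $a$ and (i) follows.

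For part (ii), combining (i) with Theorem~\ref{thm:semigroup.generator} gives $\Cb^2\subseteq D(\mathscr{L})$ and
\[
\mathscr{L}f=\inf_{a\in A}\mathscr{L}^a f+m\|\nabla f(\,\cdot\,)\|=\tfrac12\Delta f+\langle b,\nabla f\rangle+m\|\nabla f(\,\cdot\,)\|.
\]
Taking $D=\Cb^2\subseteq D(\mathscr{L})$, Theorem~\ref{cor:viscosity} shows that $\tilde v(s,x):=(\cS(s)h)(x)$ is a $D$-viscosity solution of $\partial_s\tilde v=\mathscr{L}\tilde v$ with $\tilde v(0,\cdot)=h$; the time-reversal $v(t,x):=\tilde v(T-t,x)$ then solves exactly \eqref{eq:viscosity_g_exp} with terminal datum $v(T,\cdot)=h$, and since $D\supseteq\Cb^2$ this is a viscosity solution in the classical sense (cf.\ the remark following Definition~\ref{def:D.M.viscosity}). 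To identify $V$, I would use the variational representation \eqref{eq:g_exp_vari}: by Girsanov's theorem $X_s^{t,x}$ acquires drift $b+\vartheta_s$ under $\P^\vartheta$ with $\|\vartheta_s\|\le m$, so $V$ is the value of a drift-control problem whose dynamic programming equation is \eqref{eq:viscosity_g_exp}, as $\sup_{\|\vartheta\|\le m}\langle\vartheta,\nabla V\rangle=m\|\nabla V\|$; standard BSDE/Feynman--Kac theory (e.g.\ \cite{el1997backward}) makes $V$ a bounded continuous viscosity solution of \eqref{eq:viscosity_g_exp} with $V(T,\cdot)=h$. Since $V$ and $v$ are both bounded continuous viscosity solutions of the same uniformly parabolic semilinear equation with Lipschitz Hamiltonian, the classical comparison principle \cite{crandall1992user,crandall1983viscosity,fleming2006controlled} forces $V=v$ and yields uniqueness.

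The main obstacle I anticipate is not any single computation but the clean reconciliation of the three solution notions in play---showing that the $D$-viscosity solution delivered by Theorem~\ref{cor:viscosity} is a classical viscosity solution of the time-reversed PDE, and that the $g$-expectation value $V$ is a classical viscosity solution of the \emph{same} PDE---so that one comparison principle applies to both. Verifying the hypotheses of that comparison principle (boundedness of $h$, Lipschitz dependence of $m\|\cdot\|$ on the gradient, and uniform ellipticity from the Laplacian) is the one genuinely analytic ingredient; the It\^o computation in (i) and the time reversal in (ii) are routine.
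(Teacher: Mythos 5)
Your proposal is correct and follows essentially the same route as the paper: part (i) is the same It\^o-formula computation of the ($a$-independent) generator $\tfrac12\Delta f+\langle b,\nabla f\rangle$, and part (ii) combines Theorem~\ref{thm:semigroup.generator} and Theorem~\ref{cor:viscosity} to make $v$ a (classical) viscosity solution, identifies $V$ as a viscosity solution of the same PDE via the nonlinear Feynman--Kac property of the Markovian BSDE (the paper cites \cite[Theorem 3.4]{barles1997backward} where you invoke the Girsanov/drift-control picture and \cite{el1997backward}), and concludes with the standard comparison principle. The only cosmetic difference is your explicit time-reversal bookkeeping and the drift-control detour, neither of which changes the substance of the argument.
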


    \begin{remark}
        In view of Remark~\ref{rem:g_exp} and Corollary~\ref{cor:g_exp}, the function $v$ in \eqref{eq:viscosity_g_exp_value} can also be seen as the unique viscosity solution of a nonlinear Kolmogorov equation under drift uncertainty, see, e.g., \cite{neufeld2017nonlinear, bartl2024numerical, neufeld2014measurability, liu2019compactness}, or, alternatively, as the solution to a Hamilton–Jacobi–Bellman (HJB) equation arising from stochastic control problems with drift control, see, e.g., \cite{fleming2006controlled, pham2009continuous, yong1999stochastic}.\ For similar observations as well as related settings involving drift and volatility uncertainty (or control), cf.\ \cite{bartl2023sensitivity,denis2013optimal,neufeld2018robust,tevzadze2013robust,pham2022portfolio,bartl2021duality,park2024irreversible}, corresponding to second-order backward stochastic differential equations, cf.\ \cite{cheridito2007second,matoussi2015robust,soner2012wellposedness,soner2013dual}, within the framework of nonlinear semigroups, we refer to \cite{BK22, nendel2021wasserstein, BKN23, denk2020semigroup, nendel2021upper, blessing2024discrete}.
    \end{remark}

    \subsection{Connection to robust optimization under drift uncertainty}
    In this section, we aim to establish the connection between the semigroup $\mathcal{S}$ and robust optimization problems under drift uncertainty, which will be formulated within the framework of stochastic differential games, see, e.g., \cite{friedman1972stochastic,fleming1989existence,hamadene1997bsdes,buckdahn2008stochastic,talay2002worst}.\ To that end, we consider a complete probability space $(\Omega,\mathcal F,\P)$ supporting a $d$-dimensional standard Brownian motion $(W_t)_{t\geq 0}$ together with its natural filtration $\mathbb F:=(\mathcal{F}_t)_{t\geq0}$ augmented by all $\P$-null sets.

    Assume there exists a decision maker who aims to minimize a terminal cost by choosing an optimal control (so that they act as the minimizer), while the environment systematically acts in opposition to the decision maker’s interest (so that it plays the role of the maximizer in an adversarial way).
    
    Fix $m\geq0$, a nonempty set of actions $A \Subset \X$, and a time-horizon $T>0$. We then consider the following set of admissible controls for the environment
    \begin{align}\label{eq:control_env}
        \Lambda_t:=\big\{\vartheta:=(\vartheta_s)_{s\in [t,T]}\,\big|\,
            \vartheta\text{ is $\mathbb{F}$-progressively measurable and }\|\vartheta_s\|\leq m \;\P\otimes \d s\text{-a.e.}\big\}
    \end{align}
 and the set of admissible controls for the decision maker
    \begin{align*}
        \mathcal{A}_t:=\big\{\alpha=(\alpha_s)_{s\in [t,T]} \,\big|\;\alpha \text{ is $\mathbb{F}$-progressively measurable and }\alpha_s\in A\;\P\otimes \d s\text{-a.e.}\big \}.
    \end{align*}

    We now consider the robust optimization problem of the decision maker w.r.t.\ a given function $h\in \Cb$. Set
    \begin{align}\label{eq:value_SDG}
        V(t,x):=\inf_{\alpha\in \mathcal{A}_t}\sup_{\vartheta \in \Lambda_t} \mathbb{E}^{\mathbb{P}}\Big[h\big( X_{T}^{t,x;\alpha,\vartheta}\big)\Big]\quad \text{for all }(t,x)\in [0,T]\times \X,
    \end{align}
    where, for all $(t,x)\in [0,T]\times \X$ and all 
    $(\alpha,\vartheta)\in \mathcal{A}_t\times \Lambda_t$, we consider a controlled Brownian motion $X^{t,x;\alpha,\vartheta}:=(X_s^{t,x;\alpha,\vartheta})_{s\in [t,T]}$ starting at time $t$ in $x$, given by
    \begin{align}\label{eq:control_process}
    X_s^{t,x;\alpha,\vartheta}:= x+\int_t^s\big(b(\alpha_u)+\vartheta_u\big)\,\d u + \int_t^s\sigma(\alpha_u)\,\d W_u\quad\text{for } s\in[t,T],
    \end{align}
    with bounded functions $b\colon A\to\X$ and $\sigma\colon A\to \mathbb{R}^{d\times d}$.
    
    The following corollary establishes the connection between the strongly continuous monotone semigroup $\cS$ and the value function $v$ of the robust optimal control problem in \eqref{eq:value_SDG}. The proof is contained in Section\;\ref{proof:apdx:cor:SDG}.
    \begin{corollary}\label{cor:SDG}
        Let $(\psi^a)_{a\in A}$ and $(Y^a)_{a\in A}$ be given by \eqref{eq:eg_psi} and \eqref{eq:ito_diffusion} in Example \ref{lem:examples} a), respectively, with bounded functions $b\colon A\to\X$, and $\sigma\colon A\to \mathbb{R}^{d\times d}$.\ Then, the following hold:
        \begin{itemize}
            \item [(i)] $\Cb^2\subseteq \cap_{a\in A} D(\mathscr{L}^a)$ and every $f\in\Cb^2$ satisfies \eqref{eq:ass.gen.bound}\;and\;\eqref{eq:ass.generator} in Theorem\;\ref{thm:semigroup.generator}.
            \item [(ii)] The value function $V$ given in \eqref{eq:value_SDG} coincides with the function $v\colon [0,T]\times \X\to \mathbb{R}$, defined by
            \begin{align}\label{eq:viscosity_SDG_value}
            v(t,x):=(\cS(T-t)h)(x)\quad \text{for all }(t,x)\in [0,T]\times \X,
            \end{align}
            and $V=v$ is the unique viscosity solution to the following PDE:
            \begin{align}\label{eq:HJBI}
    \left\{
        \begin{aligned}
    &\partial_tv(t,x)+\mathscr{L}v(t,x)=0\quad &&\text{for all }(t,x)\in [0,T)\times \X,\\
    &v(T,x)= h(x)\qquad && \text{for all }x\in \X,
        \end{aligned}
    \right.
    \end{align}
    where the infinitesimal generator $\mathscr{L}$ is given by 
    \[
    \mathscr{L}v(t,x)
        = \inf_{a\in A}\bigg\{\frac{1}{2}\operatorname{tr}\big(\sigma(a)\sigma(a)^\top \nabla^2 v(t,x) \big) + \big\langle b(a), \nabla v(t,x) \big\rangle \bigg\}+m\|\nabla v(t,x)\|.
    \] 
        \end{itemize}
    \end{corollary}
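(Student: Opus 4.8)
The strategy is to obtain part~(i) from a Taylor expansion of the Gaussian reference semigroup, and part~(ii) by combining the generator identity of Theorem~\ref{thm:semigroup.generator}, the viscosity result of Theorem~\ref{cor:viscosity}, and a comparison principle to match $v$ with the game value $V$. In the setting of Example~\ref{lem:examples}~a) the reference semigroup is $\big(T^a(t)f\big)(x)=\E^{\P}\big[f\big(x+b(a)t+\sigma(a)W_t\big)\big]$. Writing $\xi_t:=b(a)t+\sigma(a)W_t$ and expanding $f(x+\xi_t)$ to second order, the first-order term yields $\langle b(a),\nabla f(x)\rangle t$ (the stochastic integral having zero mean), while $\E^{\P}[\xi_t\xi_t^\top]=\sigma(a)\sigma(a)^\top t+b(a)b(a)^\top t^2$ produces the second-order term; dividing by $t$ and letting $t\downarrow0$ gives, in the mixed topology, $\mathscr{L}^a f=\tfrac12\operatorname{tr}\!\big(\sigma(a)\sigma(a)^\top\nabla^2 f\big)+\langle b(a),\nabla f\rangle$ for $f\in\Cb^2$, so $\Cb^2\subseteq D(\mathscr{L}^a)$. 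For $f\in\Cb^3$ the bound \eqref{eq:ass.gen.bound} is immediate from boundedness of $b,\sigma,\nabla f,\nabla^2 f$. To verify \eqref{eq:ass.generator} I would expand to third order: the discrepancy between $t^{-1}\big(T^a(t)f(x)-f(x)\big)$ and $\mathscr{L}^a f(x)$ equals $\tfrac{t}{2}\operatorname{tr}\!\big(b(a)b(a)^\top\nabla^2 f(x)\big)=O(t)$ plus a remainder controlled by $\tfrac{1}{6t}\|\nabla^3 f\|_\infty\,\E^{\P}\|\xi_t\|^3=O(t^{1/2})$, both uniform in $a\in A$ and $x\in\X$ since $b,\sigma$ and the derivatives of $f$ are bounded; hence the supremum in \eqref{eq:ass.generator} vanishes as $t\downarrow0$ (in fact uniformly on all of $\X$).

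For part~(ii), combining the above with Theorem~\ref{thm:semigroup.generator} shows that every $f\in\Cb^3$ lies in $D(\mathscr{L})$ with $\mathscr{L}f=\inf_{a\in A}\big\{\tfrac12\operatorname{tr}(\sigma(a)\sigma(a)^\top\nabla^2 f)+\langle b(a),\nabla f\rangle\big\}+m\|\nabla f\|$, exactly the stated operator. Taking $D=\Cb^3$ in Theorem~\ref{cor:viscosity}, the map $u(s,x):=\big(\cS(s)h\big)(x)$ is a $D$-viscosity solution of $\partial_s u=\mathscr{L}u$ with $u(0,\cdot)=h$, and the time reversal $v(t,x)=u(T-t,x)$ is then a viscosity solution of the terminal-value problem \eqref{eq:HJBI}. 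Since $\Cb^3\subsetneq\Cb^2$, I would upgrade this to a solution in the classical sense by mollification: any $C^{1,2}$ test function touching $v$ at a point can be approximated by functions whose spatial slices lie in $\Cb^3$, with gradients and Hessians converging locally uniformly, so that continuity of the Hamiltonian in $(\nabla v,\nabla^2 v)$ lets the $D$-viscosity inequalities pass to the limit.

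To identify $V$, observe that the game Hamiltonian decouples: for all $p\in\X$ and all symmetric $M$,
\[
\inf_{a\in A}\sup_{\|\theta\|\le m}\Big\{\langle b(a)+\theta,p\rangle+\tfrac12\operatorname{tr}\!\big(\sigma(a)\sigma(a)^\top M\big)\Big\}
=\inf_{a\in A}\Big\{\langle b(a),p\rangle+\tfrac12\operatorname{tr}\!\big(\sigma(a)\sigma(a)^\top M\big)\Big\}+m\|p\|,
\]
because $\sup_{\|\theta\|\le m}\langle\theta,p\rangle=m\|p\|$ is independent of $a$; in particular the Isaacs condition holds. By the dynamic programming principle for the game \eqref{eq:value_SDG} and the standard verification that its value is a viscosity solution of the associated Isaacs equation under this condition (cf.\ \cite{fleming2006controlled,fleming1989existence,buckdahn2008stochastic}), $V$ is a classical viscosity solution of \eqref{eq:HJBI} with $V(T,\cdot)=h$. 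Finally, since the Hamiltonian is Lipschitz in $p$, degenerate elliptic in $M$, and has bounded coefficients, a standard comparison principle (cf.\ \cite{crandall1992user}) gives uniqueness of bounded continuous viscosity solutions; applying it to $v$ and $V$, which share the terminal datum $h$, yields $V=v$.

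The main obstacle is the characterization of the open-loop game value $V$ as a viscosity solution of \eqref{eq:HJBI}: in the $\inf$--$\sup$ formulation over progressively measurable controls the dynamic programming principle is not automatic, and one must exploit the decoupled Isaacs structure above to justify it. Reconciling the $\Cb^3$-viscosity notion delivered by Theorem~\ref{cor:viscosity} with the classical one, and invoking a comparison principle for the possibly degenerate HJBI operator, are the remaining technical points.
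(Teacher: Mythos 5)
Your proposal is correct and follows essentially the same route as the paper: part (i) is obtained by a direct expansion of the Gaussian reference semigroup (the paper uses It\^o's formula rather than a Taylor expansion, with the $\Cb^3$-regularity playing the same role in getting the uniformity in \eqref{eq:ass.generator}), and part (ii) combines Theorem \ref{thm:semigroup.generator} and Theorem \ref{cor:viscosity} with the identity $\sup_{\|\theta\|\le m}\langle \theta,p\rangle=m\|p\|$ and the Fleming--Souganidis characterization of $V$ as the unique viscosity solution of the HJBI equation, exactly as in the paper. The only cosmetic differences are that the paper takes $D=\Cb^2$, so that by the remark following Definition \ref{def:D.M.viscosity} a $D$-viscosity solution is already a viscosity solution in the classical sense (rendering your mollification step unnecessary), and it deduces $V=v$ directly from the uniqueness statement in \cite[Theorem 2.6]{fleming1989existence} rather than from a separate comparison principle.
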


    \begin{remark}
        While the robust optimization problem in \eqref{eq:value_SDG} is given as a min--max problem, the resulting PDE in \eqref{eq:HJBI} takes the form of a HJB equation involving the minimization over controls, i.e., $\inf_{a \in A}$, rather than a full Isaacs-type equation.\ This reduction stems from the specific structure of the uncertainty in \eqref{eq:control_env} and the controlled process in \eqref{eq:control_process}: the uncertainty appears only in the drift term and is additively separated from the control process.\ As a consequence, the associated Hamilton-Jacobi-Bellman-Isaacs (HJBI) equation for the problem \eqref{eq:value_SDG}, see \eqref{eq:HJBI2} in Section \ref{proof:apdx:cor:SDG}, reduces to an HJB equation with an additive nonlinear first-order term in this setup.
    \end{remark}

\section{Best-case maximization}
\label{sec:best-case}
In this section, we discuss the `best-case' maximization problem, where the infimum over the action set $A$ is replaced by a supremum in the definition of the family of operators $I$.\ The properties of the resulting operators are essential to prove our main results Theorem \ref{thm:semigroup} and Theorem \ref{thm:semigroup.generator}.
We consider the family $J=(J(t))_{t\geq0}$ of operators, defined by 
\begin{equation} \label{eq:def.J}
 \big(J(t)f\big)(x):=\sup_{a\in A}\sup_{\nu\in \cB_t^a(m)}\int_{\R^d}f\big(\psi_t^a(x)+z\big)\,\nu(\d z)
\end{equation}
for all $t\geq 0$, $f\in\Cb$, and $x\in \R^d$, so that $J(t)$ corresponds to the `best-case' maximization problem for the ambiguity set $({\mathcal{B}}_t^a(m))_{a\in A}$ for $t\geq0$.\ Then, similarly to what has been done for the family $I$, for $\pi = \{ t_0, \dots, t_k \} \in \operatorname{P}$ with $0=t_0<\ldots<t_k$ and $k\in \N$, we consider the composition 
	\begin{align} \label{eq:def_iter.J}
		\mathcal{J}(\pi)f := \big(J(t_1 - t_0) \cdots J(t_k - t_{k-1})\big) f\quad \text{for all }f\in \Cb.
	\end{align}

\begin{remark} \label{rem:J}
    Note that,  for all $t \ge 0$, $f \in \Cb$, $\|J(t)f\|_\infty\leq \|f\|_\infty$ and, for all $t \ge 0$, $f,g \in \Cb$, and $x \in \X$, it holds
    \[
    |(J(t)f)(x) - (J(t)g)(x)| \le \sup_{a \in A} \sup_{\nu \in {\mathcal{B}}_t^a(m)} \int_\X \big|f\big(\psi_t^a(x) + z\big) - g\big(\psi_t^a(x) + z\big)\big|\,\nu(\d z) \le \|f - g\|_\infty.
    \]
    Therefore, the operator $J(t)$ is a contraction for each $t \ge 0$, i.e.,
    \begin{equation} \label{eq:J.contraction}
        \|J(t)f - J(t)g\|_\infty \le \|f - g\|_\infty \quad \text{for all } t \ge 0 \text{ and } f,g \in \Cb.
    \end{equation}    
    A direct computation shows that $J(t)$ is subadditive and positive homogeneous for each $t \ge 0$, i.e.,
    \[
    \big(J(t)(f + g)\big)(x) \le (J(t)f)(x) + (J(t)g)(x) \quad \text{and} \quad \big(J(t)(\lambda f)\big)(x) = \lambda (J(t)f)(x)    \]
    for all $f,g \in \Cb$, $\lambda \ge 0$, and $x \in \X$.
\end{remark}

We then have the following continuity results, which play a crucial role in the subsequent discussion.

\begin{lemma} \label{lem:J.cont.above}
	Suppose that Assumption \ref{ass:general} holds.\ Then, for all $\eps>0$, $t\ge 0$, and $K \Subset \X$, there exist $h>0$ and $K' \Subset \X$ such that, for all $f, g \in \Cb$ and all partitions $\pi \in \operatorname{P}$ with $\max \pi \le t$ and $\mesh \pi \le h$,
	\[
	\|{\mathcal{J}}(\pi)f - {\mathcal{J}}(\pi)g\|_{\infty, K} \le \eps \|f-g\|_\infty+ \|f - g\|_{\infty, K'}.
	\]
\end{lemma}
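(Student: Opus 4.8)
\emph{Plan of proof.} The plan is to use the algebraic properties of the best-case operators from Remark~\ref{rem:J} to reduce the claim to a single \emph{uniform non-exit estimate} for the composition $\cJ(\pi)$, and then to establish that estimate via a Lyapunov bound that does not deteriorate as the partition is refined. Since each $J(s)$ is monotone, subadditive, positively homogeneous (Remark~\ref{rem:J}), and preserves constants (as $\int_\X c\,\nu(\d z)=c$ for every probability measure $\nu$), all four properties are inherited by every composition $\cJ(\pi)$. For $f,g\in\Cb$ with $\|f\|_\infty,\|g\|_\infty\le r$, writing $f\le g+|f-g|$ and applying monotonicity and subadditivity yields the pointwise bound $|\cJ(\pi)f-\cJ(\pi)g|\le\cJ(\pi)|f-g|$. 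For a ball $K'\supseteq K$ (to be chosen) and a continuous cutoff $\zeta$ with $0\le\zeta\le1$, $\zeta=1$ on $(K')^c$ and $\zeta=0$ on a slightly smaller ball, the elementary estimate $|f-g|\le\|f-g\|_{\infty,K'}+2r\,\zeta$ together with the four properties gives
\[
\cJ(\pi)|f-g|\le\|f-g\|_{\infty,K'}+2r\,\cJ(\pi)\zeta.
\]
Hence, with $c'=1$, it suffices to produce $K'$ and $h>0$ so that $\sup_{x\in K}(\cJ(\pi)\zeta)(x)\le\eps/(2r)$ for every $\pi$ with $\max\pi\le t$ and $\mesh\pi\le h$.

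To control $\cJ(\pi)\zeta$ I introduce the smooth, coercive, $1$-Lipschitz Lyapunov function $V(x):=\sqrt{1+\|x\|^2}$, which has bounded gradient and Hessian and compact sublevel sets. Two ingredients are needed. First, because $V$ is $1$-Lipschitz and $\cW_1\le\cW_p$, Kantorovich--Rubinstein duality bounds the uncertainty by the radius alone, $\int V(\psi_s^a(x)+z)\,\nu(\d z)\le\int V(\psi_s^a(x)+z)\,\mu_s^a(\d z)+sm$ for all $\nu\in\cB_s^a(m)$, so that
\[
(J(s)V)(x)\le\sup_{a\in A}(T^a(s)V)(x)+sm.
\]
Second, a Taylor expansion of the smooth $V$ against the reference measures, using the linear growth of $(\psi^a)$ in Assumption~\ref{ass:general}~(i) and the generator-type control in Assumption~\ref{ass:general}~(iii) and~(iv), provides a constant $C\ge0$ and a threshold $h_0>0$ with
\[
(J(s)V)(x)\le V(x)+Cs\big(1+V(x)\big)\qquad\text{for all }x\in\X,\ a\in A,\ 0<s\le h_0.
\]

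Let $\pi=\{t_0,\dots,t_k\}$ with increments $s_j:=t_j-t_{j-1}$, all $\le h\le h_0$. Feeding the one-step bound into $\cJ(\pi)=J(s_1)\cdots J(s_k)$ and using monotonicity, subadditivity, positive homogeneity, and constant-preservation, an affine bound $\alpha V+\beta$ is propagated through each factor with $\alpha\mapsto\alpha(1+Cs_j)$ and $\beta\mapsto\beta+\alpha Cs_j$. Since $\sum_j s_j=\max\pi\le t$, a discrete Gronwall argument yields the \emph{partition-independent} bound
\[
\cJ(\pi)V\le e^{Ct}\big(V+Ct\big)=:\bar V.
\]
Because $\zeta$ vanishes on $\{V\le L'\}$, we have $\zeta\le V/L'$, so monotonicity and positive homogeneity give $\cJ(\pi)\zeta\le(1/L')\,\cJ(\pi)V\le(1/L')\,\bar V$. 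Choosing $L'$ so large that $(2r/L')\sup_{x\in K}\bar V(x)\le\eps$ (possible since $\bar V$ is finite on the compact set $K$ and independent of $\pi$), and setting $K':=\{V\le L'+1\}\supseteq K$ and $h:=h_0$, completes the proof with $c'=1$.

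\emph{Main obstacle.} The heart of the argument is the one-step drift bound, and specifically the fact that its error is $O(s)$ rather than merely $o(1)$. Estimating $V(\psi_s^a(x)+z)-V(x)$ by the Lipschitz property alone would introduce the \emph{absolute} increment $\int\|z\|\,\mu_s^a(\d z)$, of order $(\sup_a\int\|z\|^p\,\mu_s^a(\d z))^{1/p}$, which need not be summable over a fine partition and reflects the infinite variation of the limiting diffusion; the rate $O(s)$ survives only because the \emph{signed} mean $\int z\,\mu_s^a(\d z)$ and the local second moment are both of order $s$ by the generator-type Assumption~\ref{ass:general}~(iii). A second difficulty is the regime $p<2$, where the reference measures may have infinite second moments: there one truncates at radius $M$, handling the bulk by Assumption~\ref{ass:general}~(iii) and the tail by the $O(s)$ large-jump intensity of Assumption~\ref{ass:general}~(iv), the relevant tool being the coupling bound $\nu(\{\|z\|>R\})\le\mu_s^a(\{\|y\|>M\})+(R-M)^{-p}(sm)^p$ for $R>M$. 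Keeping every constant dependent on the horizon $t$ but not on the number $k$ of periods is exactly what makes the uniform choice of $h$, $K'$, and $c'$ possible.
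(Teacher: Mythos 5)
Your reduction in the first half is correct: using monotonicity, subadditivity, positive homogeneity and constant-preservation of $\cJ(\pi)$ to reduce the lemma (with $c'=1$) to the single uniform non-exit estimate $\sup_{x\in K}(\cJ(\pi)\zeta)(x)\le \eps/(2r)$ is sound, and is in substance a hands-on version of what the paper achieves through the abstract criterion of Lemma~\ref{lem:crit_above}. The genuine gap is in the second half, namely the one-step Lyapunov bound $(J(s)V)(x)\le V(x)+Cs\big(1+V(x)\big)$ for the \emph{unbounded} function $V(x)=\sqrt{1+\|x\|^2}$. After your (correct) Wasserstein step and the (correct) drift step via Assumption~\ref{ass:general}~(i), what remains is to show $\int_\X \big(V(x+y)-V(x)\big)\,\mu_s^a(\d y)\le Cs\big(1+V(x)\big)$ uniformly in $x\in\X$ and $a\in A$. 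Splitting at a radius $M$, the bulk $\{\|y\|\le M\}$ is indeed $O(s)$ by Taylor expansion and Assumption~\ref{ass:general}~(iii) applied to the fixed test functions $y\mapsto y_i\phi(y)$ and $y\mapsto \|y\|^2\phi(y)$; but the tail term is only bounded by $\int_{\{\|y\|>M\}}\|y\|\,\mu_s^a(\d y)$, and \emph{no assumption of the paper controls tail first moments at rate $O(s)$}. Assumption~(iv) controls the tail \emph{probability}, $\mu_s^a(\{\|y\|>M\})\le \eps s$, and gives $\int_\X\|y\|^p\,\mu_s^a(\d y)=o(1)$ with no rate; for the $p\in(1,2)$ regime the paper allows, H\"older yields at best $o(1)\cdot(\eps s)^{(p-1)/p}$, which is not $O(s)$ and whose sum over a partition of mesh $s$ behaves like $s^{-1/p}$ up to the unquantified $o(1)$ factor. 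Your own proposed tool for this regime, the coupling bound $\nu(\{\|z\|>R\})\le \mu_s^a(\{\|y\|>M\})+(R-M)^{-p}(sm)^p$, again controls only probabilities, which cannot dominate the integral of the unbounded integrand $\|y\|$ over the tail. Truncating $V$ at a level $L$ removes this, but then the per-step tail error becomes $2L\,\mu_s^a(\{\|y\|>M\})\le 2L\eps s$, and the required $L$ depends on constants that depend on $M$, which depends on $\eps$, which in turn must be of size $1/(Lt)$: the quantifiers become circular. (A secondary point, also left unjustified, is that applying $J(s)$ and $\cJ(\pi)$ to functions of linear growth requires extending the operators beyond $\Cb$.)

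This is exactly the difficulty the paper's proof is structured to avoid: it never applies the operators to unbounded functions. It works with a compactly supported cutoff $\zeta\in\Cci$, for which the tail of $\mu_s^a$ enters only through its mass---this is the $\eps\|f\|_\infty$ term in Lemma~\ref{lem:apriori.linsg}---and, instead of a Gronwall iteration that tolerates a multiplicative factor $(1+Cs)$ per step, it chooses $\zeta$ by Lemma~\ref{lem:cut-off-existence} with the \emph{weighted} gradient bound $\sup_{x\in\X}(1+\|x\|)\|\nabla\zeta(x)\|+\|\nabla^2\zeta\|_\infty\le\delta$, where $\delta$ is picked \emph{after} the constant $C'$ of Lemma~\ref{lem:apriori.linsg}. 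The per-step error $\|J(s)(-\zeta)-(-\zeta)\|_\infty\le s\big(\tfrac{\eps}{2t}+(m+C')\delta\big)$ is then uniform in space and simply telescopes over the partition. Unless you can derive an $O(s)$ bound on $\int_{\{\|y\|>M\}}\|y\|\,\mu_s^a(\d y)$ from Assumptions~(i)--(iv) together with the Chapman--Kolmogorov structure---which your argument does not attempt, and which the stated assumptions do not appear to yield---the Lyapunov route as written does not close.
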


\begin{proof}
	Note that, for all $\pi \in \operatorname{P}$, $\cJ(\pi)$ is monotone, subadditive, and positive homogeneous, since it is the composition of the monotone, subadditive, and positive homogeneous operators $J$, see Remark \ref{rem:J}.\ Moreover, $\cJ(\pi)0 = 0$ and $\cJ(\pi)1=1$ for all $\pi \in \operatorname{P}$.\ We will show the statement by employing Lemma~\ref{lem:crit_above} and Lemma~\ref{lem:cut-off-existence}.\ Using \eqref{eq.le.gen.1}, for all $s\geq 0$ and $\zeta \in \Cci$, we obtain
	\begin{align*}
		\|J(s)(-\zeta) - (-\zeta)\|_\infty & \le \sup_{a \in A} \|I^a(s)(-\zeta) - T^a(s)(-\zeta)\|_\infty + \sup_{a \in A} \| T^a(s)(-\zeta) - (-\zeta) \|_\infty \\
		& \leq s\sup_{x\in \X}\bigg(\sup_{a\in A}\frac{\big(I^a(s)(-\zeta)\big)(x)-\big(T^a(s)(-\zeta)\big)(x)}{s}\bigg) \\
		& \qquad + \sup_{a \in A} \|T^a(s) (-\zeta) -(-\zeta)\|_\infty\\
		&\leq sm \|\nabla \zeta\|_\infty + \sup_{a \in A} \|T^a(s) (-\zeta) -(-\zeta)\|_\infty.
	\end{align*}
    Now, let $\eps>0$, $t\ge 0$, and $K \Subset \X$.\ Since $K$ is compact, there exists a radius $R > 0$ such that $K \subset \{x\in \X \,|\, \|x\|\leq R\}$.\	Moreover, let $\delta>0$ such that
    $(m+C'_\eps)\delta <\frac{\eps}{2t}$, where $C_\eps'\geq0$ is the constant from Lemma \ref{lem:apriori.linsg}.\ Then, by Lemma \ref{lem:cut-off-existence}, there exists a function $\zeta \in \Cci$ with $0 \le \zeta \le 1$ and $\zeta(x) = 1$ for all $x \in K$, which satisfies
	\begin{equation} \label{eq:pr.select.zeta}
	\sup_{y \in \X} (1 + \|y\|)\|\nabla \zeta(y)\| + \|\nabla^2 \zeta\|_\infty \le \delta.
	\end{equation}
    Moreover, by Lemma \ref{lem:apriori.linsg}, there exists $h>0$ such that
	\[
    \sup_{a \in A} \|T^a(s) (-\zeta) -(-\zeta)\|_\infty \le s\bigg[\frac{\eps}{2t} + C'_\eps \bigg( \sup_{y \in \X} (1+ \|y\|)\|\nabla \zeta(y)\| + \|\nabla^2 \zeta\|_\infty \bigg) \bigg]\quad \text{for all }s\in (0,h].
	\]
	We thus obtain that
	\[
	\|J(s)(-\zeta) - (-\zeta)\|_\infty \le s \big( \tfrac{\eps}{2t}  + (m + C_\eps')\delta \big)<s\frac{\eps}t \quad \text{for all } s \in (0,h].
	\]
    Next, consider a partition $\pi = \{t_0, t_1, \dots, t_k \}\in \operatorname{P}$ with $\max \pi\leq t$ and $\mesh \pi\leq h$.\ Using \eqref{eq:J.contraction} and the translation invariance of $J(s)$ for all $s \ge 0$, we find that, for all $x \in K$,
	\begin{align}
		\big(\cJ(\pi) (1 - \zeta)\big)(x)&=\big(\cJ(\pi) (1 - \zeta)\big)(x)-(1-\zeta)(x)\leq \| \cJ(\pi)(1 - \zeta) - (1 - \zeta)\|_\infty \nonumber\\
        &= \|\cJ(\pi)(-\zeta) - (-\zeta)\|_\infty 
	\leq \sum_{i=1}^k \|J(t_i-t_{i-1})(-\zeta) - (-\zeta)\|_\infty \nonumber\\
    &\leq \frac{\eps}t\sum_{i=1}^k (t_i-t_{i-1})\leq \eps, \label{eq:telescopic.J}
	\end{align}
	where, in the first step, we used the fact that $\zeta(x) = 1$ for all $x\in K$, so that $1-\zeta(x) = 0$ for $x\in K$.\
    Therefore, the function $\zeta$ satisfies the conditions (i)-(iii) in Lemma \ref{lem:crit_above} with $K':=\operatorname{supp}\zeta$, and the result follows.
\end{proof}

 \begin{lemma} \label{lem:right.continuity}
     Suppose that Assumption \ref{ass:general} holds.\ Then,
     \[
     \lim_{n\to \infty}\sup_{x\in K}\sup_{\pi\in \operatorname{P}_{t_n}}\big|\big(\mathcal J(\pi)f_n\big)(x)- f(x)\big|=0\quad \text{for all }K\Subset \Rd
     \]
     for all sequences $(t_n)_{n\in \N}\subset [0,\infty)$ with $t_n\to 0$ and $(f_n)_{n\in \N}\subset \Cb$ with $f_n\tomix f\in \Cb$.
 \end{lemma}

 \begin{proof}
    Let $(t_n)_{n\in \N}\subset [0,\infty)$ with $t_n\to 0$ and $(f_n)_{n\in \N}\subset \Cb$ with $f_n\tomix f\in \Cb$.\ Moreover, fix $\eps > 0$ and $K \Subset \X$.\ Then, by Lemma \ref{lem:J.cont.above} and since $t_n\to 0$, there exists $K'\Subset \Rd$ and $n_0\in \N$ such that
    \[
    \sup_{x\in K}\sup_{\pi\in \operatorname{P}_{t_n}}\big|\big(\mathcal J(\pi)f_n\big)(x)- f(x)\big|\leq \frac\eps2 \|f-f_n\|_\infty+\|f-f_n\|_{\infty,K'}+ \sup_{\pi\in \operatorname{P}_{t_n}}\|\mathcal J(\pi)f-f\|_{\infty,K}
    \]
   for all $n\in \N$ with $n\geq n_0$.\ Since $f_n\tomix f$, the claim follows once we have proved that $$\lim_{n\to \infty}\sup_{\pi\in \operatorname{P}_{t_n}}\|\mathcal J(\pi)f-f\|_{\infty,K}=0.$$
   To that end, let $\zeta\in \Cci$ with $\|\zeta\|_\infty\leq \|f\|_\infty+1$ and $\|f-\zeta\|_{\infty,K'}\leq \frac\eps2$, cf. Lemma \ref{lem:approx.res}.\ Then, by Lemma~\ref{lem:J.cont.above},
    \[
     \sup_{\pi\in \operatorname{P}_{t_n}}\|\mathcal J(\pi)f-f\|_{\infty,K}\leq \eps\big(1+\|f\|_\infty\big)+\sup_{\pi\in \operatorname{P}_{t_n}}\|\mathcal J(\pi)\zeta-\zeta\|_{\infty,K}
    \]
    for all $n\in \N$ with $n\geq n_0$.\ We conclude the proof by showing that there exist $\alpha > 0$ and $h > 0$ such that
    \begin{equation} \label{eq:C2c.lip}
        \sup_{\pi \in \operatorname{P}_t}\|\cJ(\pi)\zeta - \zeta\|_\infty \le \alpha t \quad \text{for all } t \in [0, h].
    \end{equation}
    As a matter of fact, thanks to Lemma \ref{lem:apriori.linsg}, there exists $\beta >0$ and $h > 0$ such that
    \begin{equation} \label{eq:C2c.lip.1}
        \sup_{a \in A} \sup_{x \in \X} |(T^a(t)\zeta)(x) - \zeta(x)| \le \beta t \quad \text{for all } t \in [0, h].
    \end{equation}
    Moreover, using \eqref{eq.le.gen.1}, we also obtain
    \begin{equation} \label{eq:C2c.lip.2}
        \sup_{a \in A} \sup_{x \in \X} |(I^a(t)\zeta)(x) - (T^a(t)\zeta)(x)| \le m \|\nabla \zeta\|_\infty t \quad \text{for all } t \geq 0.
    \end{equation}
    Therefore, putting together \eqref{eq:C2c.lip.1} and \eqref{eq:C2c.lip.2}, there exists $\alpha > 0$ such that
    \begin{equation} \label{eq:C2c.lip.3}
        |(J(t)\zeta)(x) - \zeta(x)| \le \sup_{a \in A}|(I^a(t)\zeta)(x) - (T^a(t)\zeta)(x)| + \sup_{a \in A} |(T^a(t)\zeta)(x) - \zeta(x)| \le \alpha t
    \end{equation}
    for all $t \in [0, h]$ and all $x \in \X$.\ Now, \eqref{eq:C2c.lip} is obtained by iterating \eqref{eq:C2c.lip.3} and using \eqref{eq:J.contraction} as in~\eqref{eq:telescopic.J}.
\end{proof}

For the proof of Theorem \ref{cor:viscosity}, we also consider the operator 
\begin{equation}\label{eq.def.K}
 K(t)f:= \inf_{h>0}\sup_{\substack{\pi\in \operatorname{P}_t\\ \mesh\pi\leq h}}\cJ(\pi)f=\lim_{h\downarrow0}\sup_{\substack{\pi\in \operatorname{P}_t\\ \mesh\pi\leq h}}\cJ(\pi)f\quad \text{for all }f\in \Cb\text{ and }t\geq 0.
\end{equation}
We have the following auxiliary result.
\begin{lemma}\label{lem:Kconvex}
 Suppose that Assumption \ref{ass:general} holds.\ Then, for all $t\geq 0$, $K(t)\colon \Cb\to \Cb$ is well defined, continuous, and satisfies
 \[
K(t_n)f_n\tomix f
\]
for all sequences $(t_n)_{n\in \N}\subset [0,\infty)$ with $t_n\to 0$ and $(f_n)_{n\in \N}\subset \Cb$ with $f_n\tomix f\in \Cb$.
\end{lemma}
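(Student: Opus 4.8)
The plan is to show first that $K(t)\colon \Cb\to\Cb$ is well defined, and then to obtain the convergence $K(t_n)f_n\tomix K(t)f$ (the mixed-topology limit of the statement) by separating continuity in the function argument from continuity in time. Throughout I abbreviate, for $h>0$ and $t\ge 0$,
$\cJ_h(t)f:=\sup_{\pi\in\operatorname{P}_t,\,\mesh\pi\le h}\cJ(\pi)f$, so that $K(t)f=\inf_{h>0}\cJ_h(t)f$. Since the index set $\{\pi\in\operatorname{P}_t\,|\,\mesh\pi\le h\}$ grows with $h$, the map $h\mapsto\cJ_h(t)f$ is nondecreasing; hence the infimum coincides with the decreasing limit $\lim_{h\downarrow0}\cJ_h(t)f$, and $\|K(t)f\|_\infty\le\|f\|_\infty$ because $\|\cJ(\pi)f\|_\infty\le\|f\|_\infty$ by \eqref{eq:J.contraction}. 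This settles the pointwise existence of $K(t)f$ and its boundedness.

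The key auxiliary estimate is a transfer of Lemma \ref{lem:J.cont.above} to $K$: applying it with horizon $t$, radius $r$, and compact $K$ yields $h_0>0$, $c'\ge 0$, $K'\Subset\X$ such that $\|\cJ(\pi)F-\cJ(\pi)G\|_{\infty,K}\le c'\|F-G\|_{\infty,K'}+\eps$ for all $\pi$ with $\max\pi\le t$ and $\mesh\pi\le h_0$; using $|\sup_i a_i-\sup_i b_i|\le\sup_i|a_i-b_i|$ this passes to $\cJ_h(t)$ for every $h\le h_0$ and then, letting $h\downarrow0$, to $K(t)$. I would use this first for well-definedness: for $f\in\Lipb$, the best-case analogue of Proposition \ref{lem:I.on.Cb}~(ii) gives that each $\cJ(\pi)f$ with $\pi\in\operatorname{P}_t$ is $e^{ct}\|f\|_\Lip$-Lipschitz, so taking suprema and then the infimum over $h$ shows $K(t)f\in\Lipb\subseteq\Cb$; for general $f\in\Cb$ I approximate by $f_n\in\Lipb$ with $f_n\tomix f$, and the transferred estimate gives $K(t)f_n\tomix K(t)f$, so $K(t)f$ is a locally uniform limit of continuous functions bounded by $\|f\|_\infty$, hence lies in $\Cb$.

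For the convergence $K(t_n)f_n\tomix K(t)f$ the uniform bound $\sup_n\|K(t_n)f_n\|_\infty\le\sup_n\|f_n\|_\infty<\infty$ is immediate, so it remains to prove $\|K(t_n)f_n-K(t)f\|_{\infty,K}\to0$ for every $K\Subset\X$. I split this as $\|K(t_n)f_n-K(t_n)f\|_{\infty,K}+\|K(t_n)f-K(t)f\|_{\infty,K}$. The first (function) term is handled exactly by the transferred estimate above, now applied with horizon $t+1\ge\sup_n t_n$, giving $\|K(t_n)f_n-K(t_n)f\|_{\infty,K}\le c'\|f_n-f\|_{\infty,K'}+\eps$ with constants independent of $n$; since $f_n\tomix f$, this has $\limsup_n\le\eps$. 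The second (time) term is the main obstacle. Taking $t_n=t+s_n$ with $s_n\downarrow0$ (the case $t_n\uparrow t$ being symmetric, with the roles of $t$ and $t_n$ exchanged), I would split every partition of $[0,t+s_n]$ at the point $t$: using the refinement-monotonicity of the best-case composition (the analogue of Proposition \ref{lem:I.on.Cb}~(iii) with the inequality reversed, so that refining increases $\cJ$, allowing the restriction to partitions containing $t$) and reindexing the $[t,t+s_n]$-block to a partition $\sigma$ of $[0,s_n]$, one obtains $\cJ_h(t+s_n)f=\sup_{\sigma\in\operatorname{P}_{s_n},\,\mesh\sigma\le h}\cJ_h(t)\big(\cJ(\sigma)f\big)$.

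Given this decomposition, I would estimate, for $x\in K$,
$\big|\cJ_h(t+s_n)f(x)-\cJ_h(t)f(x)\big|\le\sup_{\sigma}\|\cJ_h(t)(\cJ(\sigma)f)-\cJ_h(t)f\|_{\infty,K}$,
and bound each term by the transferred Lemma \ref{lem:J.cont.above} (uniformly in $h\le h_0$) by $c'\|\cJ(\sigma)f-f\|_{\infty,K'}+\eps$, where finally Lemma \ref{lem:right.continuity} controls the short block via $\|\cJ(\sigma)f-f\|_{\infty,K'}\le\eps''$ uniformly over all $\sigma\in\operatorname{P}_{s_n}$ once $s_n$ is small. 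This yields $\|\cJ_h(t+s_n)f-\cJ_h(t)f\|_{\infty,K}\le c'\eps''+\eps$ uniformly in $h\le h_0$, and letting $h\downarrow0$ gives $\|K(t_n)f-K(t)f\|_{\infty,K}\le c'\eps''+\eps\to 0$. The crux of the whole argument, and the point I expect to require the most care, is precisely this time term: it relies on the fact that the continuity estimates of Lemmas \ref{lem:J.cont.above} and \ref{lem:right.continuity} are uniform in the partition and in $h$, which is exactly what permits interchanging the supremum over the short block $\sigma$ with the nonlinear operator $\cJ_h(t)$ and passing to the limit $h\downarrow 0$.
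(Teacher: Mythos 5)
Your construction of $K(t)$, the well-definedness argument (Lipschitz preservation for $f\in\Lipb$, approximation via Lemma \ref{lem:approx.res}, and the transfer of Lemma \ref{lem:J.cont.above} through $|\sup_i a_i-\sup_i b_i|\le \sup_i|a_i-b_i|$), and the ``function term'' estimate $\|K(t_n)f_n-K(t_n)f\|_{\infty,K}\le c'\|f_n-f\|_{\infty,K'}+\eps$ are all correct and consistent with the paper. The gap is in your ``time term''. The decomposition $\cJ_h(t+s_n)f=\sup_{\sigma\in\operatorname{P}_{s_n},\,\mesh\sigma\le h}\cJ_h(t)\big(\cJ(\sigma)f\big)$ requires that restricting the supremum to partitions containing the point $t$ costs nothing, i.e.\ that refining a partition \emph{increases} $\cJ$, equivalently $J(s)J(t)f\ge J(s+t)f$. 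This ``reversed Proposition \ref{lem:I.on.Cb}\,(iii)'' is false. Take $A$ a singleton: then $J(t)=I(t)$, and Proposition \ref{lem:I.on.Cb}\,(iii) gives exactly the opposite inequality $J(s)J(t)f\le J(s+t)f$, which is in general \emph{strict}: a single Wasserstein budget of size $(s+t)m$ can be concentrated on one rare transport, whereas in the composition the second-stage constraint $\nu'\in\cB^a_s(m)$ must hold separately for every outcome of the first stage, so the stages cannot pool their budgets. (Concretely, with $\mu_t^a\equiv\delta_0$, $\psi_t^a=\operatorname{id}$, $p=2$, $m=1$, $s=t=\tfrac12$, and $f$ a smoothed version of $\one_{[R,\infty)}$, the one-step value at $x=0$ is of order $R^{-2}$ while the two-step value is of order $\tfrac12 R^{-2}$; if your inequality held, one would have $I(s)I(t)=I(s+t)$ and hence $\cS(t)=I(t)$, trivializing Theorem \ref{thm:semigroup}.) For general $A$ the two quantities are incomparable, since the composition additionally allows switching actions between periods. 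Thus only $\cJ_h(t+s_n)f\ge\sup_\sigma\cJ_h(t)\big(\cJ(\sigma)f\big)$ is available, which is the useless direction, and your bound on $\|K(t_n)f-K(t)f\|_{\infty,K}$ for $t>0$ collapses; the same issue affects the ``symmetric'' case $t_n\uparrow t$.

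Note also that the statement read literally (take $t_n\equiv t$, $f_n\equiv f$) would force $K(t)f=f$ for all $t$, which is false for $t>0$; the intended content --- and the only thing invoked in the proof of Theorem \ref{cor:viscosity}, namely strong right-continuity in the sense of \cite[Definition 2.7]{fuchs2025existence} --- is the case $t=0$: $t_n\to 0$ and $K(t_n)f_n\tomix f$. For that case your scheme does work, because the problematic splitting degenerates ($\cJ_h(0)=\operatorname{id}$) and the time term is controlled directly by Lemma \ref{lem:right.continuity} applied with $g=f$: for $\max\pi=t_n\le h$ one has $\|\cJ(\pi)f_n-f\|_{\infty,K}\le c'\|f_n-f\|_{\infty,K'}+\eps$, and taking the supremum over $\pi$ and the limit $h\downarrow 0$ gives the claim. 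This one-line argument is precisely the paper's proof; your attempt to upgrade it to continuity at all $t>0$, which you identify as the crux, is the part that fails.
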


\begin{proof}
Let $f\in \Lipb$ and $t\geq 0$. Then, for all $x_1,x_2\in \X$,
\begin{align}
\notag  \sup_{a \in A} \sup_{\nu \in {\mathcal{B}}_t^a(m)} \int_\X \big|f\big(\psi_t^a(x_1) + z\big) - f\big(\psi_t^a(x_2) + z\big)\big|\,\nu(\d z)
        &\le \sup_{a \in A} \|f\|_\Lip \big\|\psi_t^a(x_1) - \psi_t^a(x_2)\big\| \\ &\le e^{ct} \|f\|_\Lip \|x_1 - x_2\|, \label{eq.lipschitz.apriori}
    \end{align}
    where the last inequality follows from Assumption \ref{ass:general}\;(ii).\ This implies that $J(t)f\in \Lipb$ with $\|J(t)f\|_\Lip\leq e^{ct}\|f\|_\Lip$.\ Iterating this estimate and using the fact that $\|K(t)g\|_\infty\leq \|g\|_\infty$ for all $g\in \Cb$, yield that $K(t)\colon \Lipb\to \Lipb$ is well-defined.\
    Let $t\geq 0$ and $(f_n)_{n \in \N} \subset \Cb$ with $f_n\tomix f$.\ Then, by \eqref{eq:J.contraction}, 
    \begin{equation}\label{eq:proof.conv.mixed1}
    \sup_{n\in \N}\|K(t)f_n\|_\infty<\infty.
    \end{equation} 
    Let $\eps>0$ and $K\Subset \Rd$.\ Then, by Lemma \ref{lem:J.cont.above}, there exists $K'\Subset \Rd$ such that
    \begin{equation}\label{eq:proof.conv.mixed2}
    \|K(t)f_n-K(t)f\|_{\infty,K}\leq \eps \|f-f_n\|_\infty+\|f-f_n\|_{\infty,K'}.
    \end{equation} 
    Approximating $f \in \Cb$ with a sequence $(f_n)_{n \in \N} \subset \Lipb$, see Lemma \ref{lem:approx.res}, \eqref{eq:proof.conv.mixed1} and \eqref{eq:proof.conv.mixed2} imply that $K(t)f\in \Cb$.\ Moreover, by \eqref{eq:proof.conv.mixed1} and \eqref{eq:proof.conv.mixed2}, it follows that $K(t)\colon \Cb\to \Cb$ is continuous.
    
    Now, assume that $(t_n)_{n \in \N} \subset [0,\infty)$ with $t_n\to 0$ and $(f_n)_{n \in \N} \subset \Cb$ with $f_n\tomix f\in \Cb$.\ Then, for all $n\in \N$ and $x\in \Rd$,
    \[
    \big|\big(K(t_n)f_n\big)(x)-f(x)\big|\leq \sup_{\pi\in \operatorname{P}_{t_n}} \big|\big(\mathcal J(\pi)f_n\big)(x)-f(x)\big|,
    \]
    and the claim follows from Lemma \ref{lem:right.continuity}.
\end{proof}

\section{Proofs of results in Section \ref{sec:DRO}}\label{sec:proofs.single.period}

\begin{proof}[Proof of Proposition \ref{lem:I.on.Cb}]
    We start by showing that $I(t)$ is well defined as an operator $\Lipb \to \Lipb$ for all $t \ge 0$, and we later extend such operator to $\Cb$ using Lemma \ref{lem:J.cont.above}. To that end, let $t\geq 0$ and $f\in\Lipb$.\ Then, by \eqref{eq.lipschitz.apriori}, for every $x_1,x_2\in\X$,
    \begin{align*}
        \big| \big(I(t)f\big)(x_1) - \big(I(t)f\big)(x_2) \big| & \le \sup_{a \in A} \sup_{\nu \in {\mathcal{B}}_t^a(m)} \int_\X \big|f\big(\psi_t^a(x_1) + z\big) - f\big(\psi_t^a(x_2) + z\big)\big|\,\nu(\d z) \\
        & \le e^{ct} \|f\|_\Lip \|x_1 - x_2\|,
    \end{align*}
     so that $I(t)f \in \Lipb$ with $\|I(t)f\|_\Lip \le e^{ct}\|f\|_\Lip$ and (ii) is proved.
    
    We can then use Lemma \ref{lem:J.cont.above} to extend $I(t)$ to a map $\Cb \to \Cb$ for all $t \ge 0$, since, for $f \in \Cb$, $(f_n)_{n \in \N} \subset \Lipb$ with $f_n \tomix f$, see Lemma \ref{lem:approx.res}, and $K \Subset \X$, it holds
    \[
    \|I(t)f - I(t)f_n\|_{\infty, K} \le \big\|J(t)|f - f_n|\big\|_{\infty, K} \to 0 \quad \text{as } n \to \infty.
    \]
    Therefore, $I(t)f$ is continuous, being a uniform limit on compacts of Lipschitz continuous functions. The monotonicity of $I(t)$ follows directly from the definition and, in order to show that it is a contraction, it is enough to write
    \[
    |(I(t)f)(x) - (I(t)g)(x)| \le \sup_{a \in A} \sup_{\nu \in {\mathcal{B}}_t^a(m)} \int_\X \big|f\big(\psi_t^a(x) + z\big) - g\big(\psi_t^a(x) + z\big)\big|\,\nu(\d z) \le \|f - g\|_\infty.
    \]
    Therefore, (i) is proved.
    
    We now prove (iii).\ Let $s,t \ge 0$ and $f \in \Cb$ be given.\ Note that, given $a \in A$, the operator $I^a$, defined in \eqref{eq:def.Ia}, corresponds to a specific instance of the operator $I$ studied in \cite{nendel2021wasserstein} (obtained by choosing $(\mu_t^a)_{t \ge 0}$ as reference family of measures and $\phi(v) = \infty \cdot \one_{(m, \infty)}(v)$, for $v \ge 0$, as penalty term). Moreover, Assumption~\ref{ass:general} is stronger than \cite[Assumption 2.2]{nendel2021wasserstein}. Therefore, we can use \cite[Lemma~3.10]{nendel2021wasserstein} to get that, for every $x\in \X$,
    \[
    \big(I^a(t)I^a(s)f\big)(x) \leq \big(I^a(t + s)f\big)(x).
    \]	
    This ensures that, for every $x\in \X$,
    \begin{align*}
        (I(t + s)f)(x) = \big( \inf_{a \in A} I^a(t + s)f \big)(x) &\ge \big(\inf_{a \in A} I^a(t)I^a(s)f \big)(x) \\
        & \ge \big( \inf_{a \in A} I^a(t) \inf_{a' \in A} I^{a'}(s)f \big)(x)\\ 
        &= (I(t)I(s)f)(x),
    \end{align*}
    as claimed.
\end{proof}

 \begin{proof}[Proof of Proposition \ref{lem:generator}]
    The proof closely follows the arguments in the proof of \cite[Lemma~3.11]{nendel2021wasserstein}. Let $f\in \Cb^1$. First, recall the definition of $I^a$ from \eqref{eq:def.Ia} and observe that
    \[
    \frac{\big(I(t)f\big)(x) - \big(T(t)f\big)(x)}{t}\leq \sup_{a\in A}\frac{\big(I^a(t)f\big)(x) - \big(T^a(t)f\big)(x)}{t}.
    \]
    Using \eqref{eq.le.gen.1}, we obtain, for all $t \ge 0$ and $x \in \X$,
    \[
    \sup_{a\in A}\frac{\big(I^a(t)f\big)(x) - \big(T^a(t)f\big)(x)}{t} \le m \|\nabla f\|_\infty.
    \]
    Now, let $K \Subset \X$ and $\eps>0$. We prove that, for $t > 0$ sufficiently small,
    \begin{equation}\label{eq.le.gen.2}
    \sup_{x \in K} \bigg( \sup_{a\in A}\frac{\big(I^a(t)f\big)(x) - \big(T^a(t)f\big)(x)}{t} - m\|\nabla f(x)\| \bigg) \le \eps.
    \end{equation}
    Using the Fundamental Theorem of Calculus and $\gamma_t^{a,x}\in \cP_p(\R^d\times \R^d)$ as in Remark \ref{rem.gamma}, we estimate
    \begin{align*}
        &\sup_{a\in A}\frac{\big(I^a(t)f\big)(x) - \big(T^a(t)f\big)(x)}{t} = \sup_{a \in A} \frac{1}{t} \int_{\X \times \X} \big(f(\psi_t^a(x) + z) - f(\psi_t^a(x) + y)\big)\,\gamma_t^{a,x}(\d y, \d z) \nonumber \\
        &\quad = \sup_{a \in A} \frac{1}{t}\int_0^1\int_{\X \times \X} \nabla f\big(\psi_t^a(x)+y+s(z-y)\big)^\top(z-y)\,\gamma_t^{a,x}(\d y, \d z) \,\d s\\
        &\quad \leq m\|\nabla f(x)\|+\sup_{a \in A} \frac{1}{t}\int_0^1\int_{\X \times \X} \big\|\nabla f\big(\psi_t^a(x)+y+s(z-y)\big)-\nabla f(x)\big\| \|z-y\|\,\gamma_t^{a,x}(\d y, \d z)\,\d s,
    \end{align*}
     and it remains to bound the last term by $\eps$ uniformly for $x\in K$.\ Using the continuity of $\nabla f$ together with Assumption \ref{ass:general} (i), there exists some $\delta>0$ such that
    \begin{align*}
    \sup_{x\in K}\sup_{a \in A}& \frac{1}{t}\int_0^1\int_{\X \times \X} \big\|\nabla f\big(\psi_t^a(x)+y+s(z-y)\big)-\nabla f(x)\big\|\|z-y\|\,\gamma_t^{a,x}(\d y, \d z)\,\d s\\
    &\leq \frac{\eps}2+ 2\|\nabla f\|_\infty\sup_{x\in K}\sup_{a \in A}\frac1{t}\int_{\X \times \X} \|z-y\|\big(\one_{\{\|y-z\|> \delta\}}+\one_{\{\|y\|> \delta\}}\big)\,\gamma_t^{a,x}(\d y, \d z)\\
    &\leq \frac{\eps}2+2\|\nabla f\|_\infty\sup_{x\in K}\sup_{a \in A}\Bigg(\frac{1}{\delta^{p-1}}\frac{\cW_p(\mu_t^a,\nu_t^{a,x})^p}{t}+\bigg(\frac1{\delta}\int_{\X}\|y\|\,\mu_t^a(\d y)\bigg)^{\frac{1}{q}}\frac{\cW_p(\mu_t^a,\nu_t^{a,x})}{t}\Bigg)\\
    &\leq \frac{\eps}2+2m\|\nabla f\|_\infty\Bigg(\frac{t^{p-1}}{\delta^{p-1}}+\sup_{a\in A}\bigg(\frac1{\delta}\int_{\X}\|y\|\,\mu_t^a(\d y)\bigg)^{\frac1q}\Bigg)\leq \eps,
    \end{align*}
    where, in the second step, we used Markov's inequality twice.\ Moreover, using the observation
    \begin{align*}
    \frac{(I(t)f)(x) - (T(t)f)(x)}{t}&\geq \inf_{a\in A} \frac{\big(I^{a}(t)f\big)(x)-T^a(t)f\big)(x)}{t}\\
    &\geq -\sup_{a\in A} \frac{\big(I^a(t)(-f)\big)(x) - \big(T^a(t)(-f)\big)(x)}{t},
    \end{align*}
    the claim follows from \eqref{eq.le.gen.1} and \eqref{eq.le.gen.2}. 
    \end{proof}

\section{Proofs of results in Section \ref{sec:multi.DRO}}\label{sec:proofs.DRO.multi}

In the proof of Theorem \ref{thm:semigroup}, we work with an auxiliary operator $\big(\hat{\cS}(t)\big)_{t \ge 0}$, which we define by using \textit{dyadic} partitions. To that end, we denote by $\cD$ the set of dyadic numbers, i.e.,
	\begin{equation} \label{eq:S.dyadic.D}
		\cD:=\big\{k 2^{-n} \colon k,n \in \N\cup\{0\} \big\}.
	\end{equation}
    Next, we define
    \begin{align} \label{eq:S.dyadic.0}
		\pi_t^n:=\{0, 2^{-n},2 \cdot 2^{-n},\dots, k_t^n 2^{-n},t\}\quad \text{for } t > 0 \text{ and } n \in \mathbb{N},
	\end{align}
    where $k_t^n := \max\{k \in \mathbb{N} \cup \{0\} \colon k 2^{-n} < t\}$ for $t > 0$ and $n\in \mathbb{N}$, and we use the convention $\pi_0^n := \{0\}$ for all $n \in \N$.
    We will refer to $(\pi_t^n)_{n\in\mathbb{N}}$ as the dyadic partitions of $[0,t]$ for $t\geq0$.

    We can then define the operator $\hat{\mathcal{S}}:=\big(\hat{\mathcal{S}}(t)\big)_{t\geq 0}$
    \begin{equation} \label{eq:S.dyadic}
		(\hat{\cS}(t)f)(x) := \inf_{n \in \N} (\mathcal{I}(\pi_t^n)f)(x) = \lim_{n \to \infty} (\mathcal{I}(\pi_t^n)f)(x), \quad \text{for } t \ge 0,\; f \in \Cb, \text{ and } x \in \X.
	\end{equation}
    where the infimum is well-defined since $((\mathcal{I}(\pi_t^n)f)(x))_{n \in \N}$ is a decreasing sequence which is bounded from below by $-\|f\|_\infty>-\infty$ and the equality between the infimum and the limit follows from the monotonicity of $\cI$ over refining partitions, see Proposition~\ref{lem:I.on.Cb}\;(iii).\ As a consequence, we shall write, for all $n_0 \in \N$,
    \begin{equation} \label{eq:S.dyadic.inf}
        (\hat{\cS}(t)f)(x) = \inf_{n \geq n_0} (\mathcal{I}(\pi_t^n)f)(x) \quad \text{for } t \ge 0,\; f \in \Cb, \text{ and } x \in \X.
    \end{equation}

    With the definition of the previous auxiliary operator at hand, we can state the proof of Theorem \ref{thm:semigroup}.

\begin{proof}[Proof of Theorem \ref{thm:semigroup}]
    We organize the proof in six steps.

    \vspace{0.5em}
    \noindent \textit{Step 1:\ For all $t \ge 0$, $\hat{\cS}(t)$ is a map $\Lipb \to \Lipb$.}\ 
    Iterating the estimate in Proposition \ref{lem:I.on.Cb}~(ii), we get $\|\cI(\pi_n^t)f\|_\Lip \le e^{ct}\|f\|_\Lip$ for all $n \in \N$ and $t \ge 0$, which in turn implies
    \begin{align*}
    \big|\big(\hat{\cS}(t)f\big)(x) - \big(\hat{\cS}(t)f\big)(y)\big| &\le \sup_{n \in \N} |(\cI(\pi_t^n)f)(x) - (\cI(\pi_t^n)f)(y)|\\
    &\le e^{ct}\|f\|_\Lip \|x - y\|
    \end{align*}
    for all $x, y \in \X$.

    \vspace{0.5em}
    \noindent \textit{Step 2: For all $t \ge 0$, $\hat{\cS}(t)$ is a sequentially continuous map $\Cb \to \Cb$ (w.r.t.\ the mixed topology) and $\cI(\pi_t^n)f \tomix \hat{\cS}(t)f$ as $n \to \infty$}. Let $f \in \Cb$ and $(f_k)_{k \in \N} \subset \Lipb$ such that $f_k \tomix f$ as $k \to \infty$ (see Lemma \ref{lem:approx.res}). For all $K \Subset \X$ and $\eps > 0$, using Lemma \ref{lem:J.cont.above}, we obtain
    \begin{equation*}
    \big\|\hat{\cS}(t)f_k - \hat{\cS}(t)f\big\|_{\infty, K} \le \sup_{n \ge n_0} \| \cJ(\pi_t^n)f_k - \cJ(\pi_t^n)f\|_{\infty, K} \le \eps
    \end{equation*}
    for $n_0, k \in \N$ sufficiently large.
    This implies
    \begin{equation} \label{eq:S.hat.cont.lip}
    \big\|\hat{\cS}(t)f_k - \hat{\cS}(t)f\big\|_{\infty, K} \to 0 \quad \text{as } k \to \infty.
    \end{equation}
    Therefore, $\hat{\cS}(t)f \in \Cb$ for all $t \ge 0$ and $f \in \Cb$, being uniform limit over compacts of (Lipschitz) continuous functions.\ 
    The sequential continuity of $\hat{\cS}(t)$ is again obtained from equation \eqref{eq:S.hat.cont.lip} since, for all $(f_k)_{k \in \N} \subset \Cb$, $f \in \Cb$ with $f_k \tomix f$ as $k \to \infty$, one has
    \begin{equation} \label{eq:S.hat.cont}
        \lim_{k \to \infty} \hat{\cS}(t)f_k = \hat{\cS}(t)f \quad \text{for all } t \ge 0.
    \end{equation}
    Finally, using that the sequence $(\cI(\pi_t^n)f)(x)$ is monotonically decreasing for all $x \in \X$ and $\hat{\cS}(t)f \in \Cb$, Dini's Theorem implies that the convergence $(\cI(\pi_t^n)f)(x) \to \big(\hat{\cS}(t)f\big)(x)$ is uniform over compacts, i.e.,
    \begin{equation} \label{eq:dini.hatS}
    \sup_{x \in K} \big|(\cI(\pi_t^n)f)(x) - \big(\hat{\cS}(t)f\big)(x)\big| \to 0 \text{ as } n \to \infty,
    \end{equation}
    for all $f \in \Cb$, $t \ge 0$, and $K \Subset \X$.\ Since $\|\cI(\pi_t^n)f\|_\infty \le \|f\|_\infty$ for all $n \in \N$, we conclude that $\cI(\pi_t^n)f \tomix \hat{\cS}(t)f$ as $n \to \infty$.
    
    \vspace{0.5em}
    \noindent \textit{Step 3: $\hat{\cS}$ is left-continuous in time}.
    We now show that, for every $f \in \Cb$, $t > 0$, and $K \Subset \X$ it holds
		\begin{equation} \label{eq:left.cont}
		\sup_{x \in K} \big|\big(\hat{\mathcal{S}}(s)f\big)(x) - \big(\hat{\mathcal{S}}(t)f\big)(x)\big| \to 0 \qquad \mbox{as } s\uparrow t.
		\end{equation}
    Fix $f \in \Cb$, $t > 0$, $K \Subset \X$, and $\eps > 0$. By \eqref{eq:dini.hatS}, there exists $n_t \in \N$ such that, for every $x \in K$,
    \[
    (\hat{\mathcal{S}}(t)f)(x) \ge (\mathcal{I}(\pi_t^{n_t})f)(x) - \eps/3.
    \]
    Let $s>0$ be such that $t - s < 2^{-n_t}$. Then, for every $x \in K$, it holds
    \begin{align}
        (\hat{\mathcal{S}}(t)f)(x) - (\hat{\mathcal{S}}(s)f)(x) & \ge (\mathcal{I}(\pi_{t}^{n_t})f)(x)-(\mathcal{I}(\pi_s^{n_t})f)(x)  - \eps/3 \nonumber \\
        & \ge \big( \mathcal{I}(\pi_s^{n_t}) I(t-s)f \big)(x)-(\mathcal{I}(\pi_s^{n_t})f)(x)  - \eps/3 \nonumber \\
        & \ge -\big| \big(\mathcal{J}(\pi^{n_t}_s) (I(t-s)f - f)\big)(x) \big| - \eps/3, \label{eq:pr.S.cont.t.0}
    \end{align}
    where the second inequality follows from Proposition \ref{lem:I.on.Cb}\;(iii).

    Since $\|I(t-s)f\|_\infty \le \|f\|_\infty$, we can apply Lemma \ref{lem:J.cont.above} (if necessary, taking $n_t$ larger than previously stated) to obtain that there exist some $K' \Subset \X$ such that
    \begin{align} \label{eq:pr.S.cont.t}
        \|\mathcal{J}(\pi^{n_t}_s)(I(t-s)f - f)\|_{\infty, K} \le  \|I(t-s)f - f\|_{\infty, K'} + \eps/3 \quad \text{for all } s \in [0,t].
    \end{align}
    
    Then by Lemma \ref{lem:right.continuity}, we can choose $\delta \in (0, 2^{-n_t})$ such that $\|I(t-s)f - f \|_{\infty, K'} \le \eps/3$ whenever $s>0$ satisfies $t-s < \delta$. Therefore, combining \eqref{eq:pr.S.cont.t.0} with \eqref{eq:pr.S.cont.t}, we find that, for all $s>0$ with $t-s < \delta$ and $x\in K$,
    \begin{align*}
        (\hat{\mathcal{S}}(t)f)(x)- (\hat{\mathcal{S}}(s)f)(x) & \ge - \|\mathcal{J}(\pi^{n_t}_s)(I(t-s)f - f)\|_{\infty, K} - \eps/3 \\
        & \ge - \|I(t-s)f - f\|_{\infty, K'} - \frac{2}{3}\eps \geq - \eps.
    \end{align*}
    In order to obtain the opposite inequality, we proceed in a similar way, starting from the set $[0,t) \cap  \cD$, where $\cD$ is the set of dyadic numbers, see \eqref{eq:S.dyadic.D}.
    For  every $s \in [0,t) \cap  \cD$, let $n_s \in \N$ be such that, for every $x \in K$,
    \[
    (\hat{\mathcal{S}}(s)f)(x)\geq (\mathcal{I}(\pi_s^{n_s})f)(x) - \eps/3,
    \]
    and we repeat the same estimates as above to obtain some (sufficiently small) $\delta>0$ such that, for every $s \in (t - \delta, t) \cap \cD$ and every $x \in K$,
    \begin{align*}
        (\hat{\mathcal{S}}(t)f)(x) - (\hat{\mathcal{S}}(s)f)(x) & \le (\mathcal{I}(\pi_s^{n_s})I(t-s)f)(x) - (\mathcal{I}(\pi_s^{n_s})f)(x) + \eps/3 \\
        & \le \|\mathcal{J}(\pi_s^{n_s}) (I(t-s)f -f)\|_{\infty, K} + \eps/3 \\
        & \le \|I(t-s)f - f\|_{\infty, K'} + \frac{2}{3}\eps \le \eps.
    \end{align*}
      Finally, to extend the last inequality to every $s$ sufficiently close to $t$, we can choose $\delta_t > 0$ such that, for every  $u \in (t - \delta_t, t) \cap  \cD$,
    \[
    \|\hat{\mathcal{S}}(t)f - \hat{\mathcal{S}}(u)f\|_{\infty,K} \le \eps/2.
    \]
    Analogously,  for every $s \in (t - \delta_t, t)$, we can choose $\delta_s > 0$ such that, for every  $u \in (s - \delta_s, s) \cap  \cD$,
    \[
    \|\hat{\mathcal{S}}(s)f - \hat{\mathcal{S}}(u)f\|_{\infty,K} \le \eps/2.
    \]
    Hence, choosing $\delta = \delta_t$ and then taking any $u \in (t - \delta, t) \cap  (s - \delta_s, s) \cap  \cD$, for each $s \in (t - \delta, t)$, we have that
    \begin{align*}
        \|\hat{\mathcal{S}}(t)f - \hat{\mathcal{S}}(s)f\|_{\infty,K} \le \|\hat{\mathcal{S}}(t)f - \hat{\mathcal{S}}(u)f\|_{\infty,K}+ \|\hat{\mathcal{S}}(s)f - \hat{\mathcal{S}}(u)f\|_{\infty,K} \le \eps,
    \end{align*}
    and the left-continuity of the map $t \mapsto \hat{\cS}(t)f$ follows.

    \vspace{0.5em}
    \noindent \textit{Step 4: Semigroup property of $\hat{\cS}$}.
    In order to prove the semigroup property, i.e., condition (iii) in Definition \ref{def:semigroup}, we follow the strategy presented in \cite{nendel2021wasserstein}. Fix $f \in \Cb$ and, at first,
     $t \in \cD$. Then, for every $x \in \X$ and $s\geq 0$,
    \begin{align*}
        \begin{aligned}
    ( \hat{\mathcal{S}}(t + s)f)(x) = \lim_{n \to \infty} (\mathcal{I}( \pi^n_{t + s})f)(x) &= \lim_{n \to \infty} ( \cI(\pi_t^n)\cI(\pi_s^n)f )(x)\\
    &\geq (\hat{\mathcal{S}}(t)\hat{\mathcal{S}}(s)f)(x),
    \end{aligned}
    \end{align*}
    where the second equality holds because $t\in\mathcal{D}$, and the inequality holds by definition of $\hat{\mathcal{S}}$. 
		
    To obtain the opposite inequality, fix $\hat n \in \N$. Then, for every $x \in \X$ and $s\geq 0$,
    \begin{align*}
    ( \hat{\cS}(t + s)f)(x) & = \lim_{n \to \infty} (\mathcal{I}( \pi^n_{t + s})f)(x)= \lim_{n \to \infty} ( \cI(\pi_t^n)\cI(\pi_s^n)f )(x)\\
    &\le \lim_{n \to \infty} ( \cI(\pi_t^n)\cI(\pi_s^{\hat n})f )(x) = ( \hat{\cS}(t)\cI( \pi_s^{\hat n})f )(x),
    \end{align*}
    where the inequality holds since $\pi_s^{n}$ becomes finer than $\pi_s^{\hat{n}}$ as $n \to \infty$ and $\cI$ is monotonically decreasing over refining partitions.
    
    Moreover, by Step 2, we know that $\cI(\pi_s^{\hat n}) f \tomix \hat{\cS}(s)f$ as $\hat n \to \infty$.\
    Therefore, using the continuity in the mixed topology of $\hat{\cS}(t)$, we obtain, for every $x \in \X$ and $s \geq 0$,
    \[
    ( \hat{\cS}(t + s)f)(x)  \le \lim_{\hat n \to \infty} ( \hat{\cS}(t)\cI(\pi_s^{\hat n})f )(x) = ( \hat{\cS}(t) \hat{\cS}(s) f )(x).
    \]
    To extend the semigroup property to arbitrary $t > 0$, we consider an approximating sequence $(t_n)_{n \in \N} \subset \cD$ such that $t_n \uparrow t$ as $n\to \infty$. Then it holds
    \[
    \hat{\cS}(t + s)f = \lim_{n \to \infty} \hat{\cS}(t_n + s)f = \lim_{n \to \infty} \hat{\cS}(t_n) \hat{\cS}(s)f = \hat{\cS}(t) \hat{\cS}(s)f,
    \]
    where the first and last equality follow from the time continuity of $\hat{\cS}$ from the left, and all the limits above are to be understood w.r.t.\;the mixed topology.

    \vspace{0.5em}
    \noindent \textit{Step 5: $\hat \cS$ is strongly continuous.}
    By Step 3, it is enough to show that
    \begin{equation} \label{eq:S.cont.at.0}
        \hat{\cS}(t)f \tomix f \quad \text{as } t \downarrow 0 \text{ for all } f \in \Cb.
    \end{equation}
    As a matter of fact, for $s \downarrow t$, we can then use the semigroup property and the sequential continuity of $\hat{\cS}(t)$, for fixed $t > 0$, to write
    \[
    \hat{\cS}(s)f = \hat{\cS}(t)\hat{\cS}(s - t)f \tomix \hat{\cS}(t) f \quad \text{as } s \downarrow t.
    \]
    We therefore prove \eqref{eq:S.cont.at.0}. 
    Note that, by \eqref{eq:S.dyadic.inf} and Lemma \ref{lem:right.continuity}, for each compact $K \Subset \X$, $f \in \Cb$, and $\eps > 0$, there exists $h > 0$ such that
    \[
    \sup_{x \in K} (\hat{\cS}(t)f)(x) - f(x) \le \sup_{x \in K} \sup_{n \in \N} \big(\cJ(\pi_t^n)f(x) - f(x)\big) \le \eps
    \]
    for all $t \in [0,h]$. On the other hand, again by \eqref{eq:S.dyadic.inf} and Lemma \ref{lem:right.continuity}, there exists $h > 0$ such that
    \[
    \sup_{x \in K} \big( f(x) - (\hat{\cS}(t)f)(x)\big) \le \sup_{x \in K} \sup_{n \in \N} \big( f(x) + \big(\cJ(\pi_t^n)(-f)\big)(x) \big) \le \eps
    \]
    for all $t \in [0, h]$. Since $\eps > 0$ is arbitrarily small, putting the two estimates together we obtain \eqref{eq:S.cont.at.0}.
    
    \vspace{0.5em}
    \noindent \textit{Step 6: $\hat{\cS} = \cS$.}
    We show that $\hat{\cS}(t)f = \cS(t)f$ for all $t \ge 0$ and $f \in \Cb$. From Proposition \ref{lem:I.on.Cb}\;(iii), it follows that, for every $t\geq 0$, $\cS(t)f \le \hat{\cS}(t)f$, since the infimum in the definition of $\cS(t)f$ is taken over more partitions. Moreover, for all $\pi = \{t_0=0, t_1, \dots, t_k = t\} \in \operatorname{P}_t$
    \begin{align*}
    \cI(\pi)f & = I(t_1 - t_0) I(t_2 - t_1) \cdots I(t_k - t_{k-1})f \\
    & \ge \hat{\cS}(t_1 - t_0) \hat{\cS}(t_2 - t_1) \cdots \hat{\cS}(t_k - t_{k-1})f = \hat{\cS}(t)f,
    \end{align*}
    where the last equality comes from the semigroup property of $\hat{\cS}$.
    By taking the infimum over all $\pi \in \operatorname{P}_t$, we get $\cS(t)f \ge \hat{\cS}(t)f$. 
    
    \vspace{0.5em}
    \noindent It is immediate to show that $\cS$ satisfies properties (i) and (ii) in Definition \ref{def:semigroup}. Moreover, we have shown in Step 4 and Step 5, respectively, that properties (iii) and (iv) are satisfied by $\hat{\cS}$ and therefore by $\cS$. Therefore, $\cS$ is a strongly continuous monotone semigroup on $\Cb$.
    To obtain \eqref{eq:equivalent.S}, note that, using \eqref{eq:S.dyadic.inf}, we can restrict the definition of $\hat{\cS}$ to arbitrary fine partitions. Therefore,
    \[
    \cS(t)f\leq \inf_{\substack{\pi \in \operatorname{P}_t\\ \mesh \pi \le h}} \cI(\pi)f \leq \inf_{n\geq n_0} \cI(\pi_t^n)f= \hat{\cS}(t)f \quad \text{for all } h>0,
    \]
    where $n_0\in \N$ with $n_0\geq \frac1h$.\ The proof is complete.
\end{proof}

\begin{proof}[Proof of Theorem \ref{thm:semigroup.generator}] We prove the statement in two steps.

\vspace{0.5em}
\noindent {\it Step 1: Generator of the one-period DRO.} Let $f \in \cap_{a\in A} D(\mathscr{L}^a) \cap \Cb^1$. We start proving that
    \begin{align} \label{eq:cor.gen}
        \lim_{t \downarrow 0} \frac{I(t)f - f}{t} = \inf_{a \in A} \mathscr{L}^a f + m \|\nabla f(\,\cdot\,)\|,
    \end{align}
    where the limit is to be understood w.r.t.\ the mixed topology.
	
    To that end, we first show that $\big\| \frac{I(t)f- f}{t} \big\|_\infty<\infty$ for all $t>0$. Indeed, using \eqref{eq.le.gen.1} we get, for every $t>0$,
    \begin{align*}
        \bigg\| \frac{I(t)f- f}{t} \bigg\|_\infty
        & \le \bigg\| \frac{I(t)f - T(t)f}{t} \bigg\|_\infty +
        \bigg\| \frac{T(t)f - f}{t} \bigg\|_\infty \\
        & \le m \|\nabla f\|_\infty + \sup_{a \in A} \bigg\| \frac{T^a(t)f - f}{t} \bigg\|_\infty.
    \end{align*}
    Since, for each $a \in A$, $T^a$ is a linear semigroup on $\Cb$, we can apply
    \cite[Lemma\;4.1.14]{jacob2001pseudo} to obtain, for every $t>0$,
    \begin{align}
       \notag \sup_{a \in A} \bigg\| \frac{T^a(t)f - f}{t} \bigg\|_\infty  = \sup_{a \in A} \bigg\| \frac{1}{t} \int_0^t T^a(s) \sL^a f\,\d s \bigg\|_\infty &\le \sup_{a \in A} \frac{1}{t} \int_0^t \|T^a(s) \sL^a f\|_\infty\,\d s\\
     \label{eq.estimate-generator}   & \le \sup_{a \in A} \| \sL^a f\|_\infty < \infty,
    \end{align}
    where the second inequality holds because $\|T^a(t) g\|_\infty \le \|g\|_\infty$ for all $a \in A$, $t\geq0$, and $g \in \Cb$, and the last inequality follows from condition \eqref{eq:ass.gen.bound}.\
    Moreover, for every $K\Subset \X$, we write
    \begin{align*}
        & \bigg\| \frac{I(t)f- f}{t} - \bigg( \inf_{a \in A} \mathscr{L}^a f +m\|\nabla f(\,\cdot\,)\| \bigg)\bigg\|_{\infty,K} \\
        &\quad  \le \bigg\| \frac{I(t)f - T(t)f}{t} - m \|\nabla f(\,\cdot\,)\| \bigg\|_{\infty,K} + \bigg\| \frac{T(t)f - f}{t} - \inf_{a \in A} \mathscr{L}^a f \bigg\|_{\infty,K} \\
        &\quad \le \bigg\| \frac{I(t)f - T(t)f}{t} - m \|\nabla f(\,\cdot\,)\| \bigg\|_{\infty,K}  + \sup_{a \in A} \bigg\| \frac{T^a(t)f - f}{t} - \mathscr{L}^a f \bigg\|_{\infty,K},
    \end{align*}
	where the first and the second term in the last line vanish as $t \downarrow 0$ because of Proposition \ref{lem:generator} and condition \eqref{eq:ass.generator}, respectively.
    Therefore, the limit \eqref{eq:cor.gen} holds, as claimed.
		
    \vspace{0.5em}
    \noindent {\it Step 2: Extension to the multi-period DRO.} By Step 1, for any $\eps > 0$, $f \in \cap_{a\in A} D(\mathscr{L}^a) \cap \Cb^1$ and $K\Subset \X$, and any sufficiently small $t > 0$, it holds
    \begin{align*}
        &\sup_{x \in K} \bigg( \frac{(\cS(t)f)(x) - f(x)}{t} - \Big(\inf_{a \in A} \mathscr{L}^a f(x) +m\|\nabla f(x)\| \Big) \bigg) \\
        &\quad  \le \sup_{x \in K} \bigg( \frac{(I(t)f)(x) - f(x)}{t} - \Big(\inf_{a \in A} \mathscr{L}^a f(x) +m\|\nabla f(x)\| \Big) \bigg) < \eps.
    \end{align*}
    To show the other inequality, we write
    \begin{align*}
        \frac{(\cS(t)f)(x) - f(x)}{t} & = \inf_{\pi \in \operatorname{P}_t} \frac{(\cI(\pi)f)(x) - f(x)}{t} \\
        & = \inf_{\pi \in \operatorname{P}_t} \sum_{i = 0}^{k - 1} \frac{\big(\cI(\{t_0, \dots, t_i\})I(t_{i+1} - t_i)f\big)(x) - \big(\cI(\{t_0, \dots, t_i\})f\big)(x)}{t} \\
        & \ge - \sup_{\pi \in \operatorname{P}_t} \sum_{i=0}^{k-1} \frac{t_{i+1} - t_i}{t} \cJ(\{t_0, \dots, t_i\})\bigg( \frac{f - I(t_{i+1} - t_i)f}{t_{i+1} - t_i} \bigg)(x) \\
        & \ge - \sup_{\max \pi \le t} \sup_{h \in (0,t]} \cJ(\pi) \bigg( \frac{f - I(h)f}{h} \bigg)(x),
    \end{align*}
    where, in each step, $\pi=\{t_0,\ldots,t_k\}$ with $0=t_0<\cdots<t_k=t$.\ Therefore, combining \eqref{eq:cor.gen} with Lemma \ref{lem:right.continuity}, we get that, for every $f \in \cap_{a\in A} D(\mathscr{L}^a) \cap \Cb^1$ and sufficiently small $t > 0$,
    \begin{align*}
        & \sup_{x \in K} \bigg( -\frac{(\cS(t)f)(x) - f(x)}{t} + \inf_{a \in A} (\mathscr{L}^a f)(x) + m \|\nabla f(x)\| \bigg) \\
        &\quad  \le \sup_{x \in K} \sup_{\max \pi \le t} \sup_{h \in (0,t]} \bigg( \cJ(\pi)\bigg( \frac{f - I(h)f}{h} \bigg)(x) + \inf_{a \in A} (\mathscr{L}^a f)(x) + m \|\nabla f(x)\| \bigg) < \eps,
    \end{align*}
    which concludes the proof.
\end{proof}
\section{Proofs of results in Section \ref{sec:apply}}\label{sec:proofs.sec.appl}

\begin{proof}[Proof of Theorem \ref{cor:viscosity}]\label{sec:proof:cor:viscosity}
Let the family $K:=(K(t))_{t\geq 0}$ of operators $\Cb\to\Cb$ be given by \eqref{eq.def.K}, see Lemma \ref{lem:Kconvex}.\ Then, by \eqref{eq:equivalent.S},
\begin{align*}
\cS(t)\big(\lambda f+(1-\lambda) g\big)&=\lim_{h\downarrow 0}\inf_{\substack{\pi\in \operatorname{P}_t\\ \mesh\pi\leq h}}\cI(\pi)\big(\lambda f+(1-\lambda) g\big)\leq \lim_{h\downarrow 0}\inf_{\substack{\pi\in \operatorname{P}_t\\ \mesh\pi\leq h}}\big(\lambda \cI(\pi)f+(1-\lambda) \cJ(\pi)g\big)\\
&\leq \lambda \cS(t)f+(1-\lambda)K(t)g
\end{align*}
for all $\lambda\in [0,1]$, $f,g\in \Cb$, and $t\geq 0$.\ Hence, by Theorem \ref{thm:semigroup} and Theorem \ref{thm:semigroup.generator}, it follows that $\cS=(\cS(t))_{t\geq 0}$ is a $K$-convex monotone semigroup with generator $\mathscr L$ and $S(t)0= 0$ for all $t\geq 0$. Moreover, by Lemma \ref{lem:Kconvex}, $K$ is strongly right-continuous in the sense of \cite[Definition~2.7]{fuchs2025existence}, so that we can apply \cite[Theorem~3.1]{fuchs2025existence} with $M:=\{\delta_x\,|\,x\in \X\}$, where $\delta_x$ denotes the Dirac measure with barycenter $x\in \X$, to conclude that the function $v\colon [0,\infty)\times \mathbb{R}^d \to \mathbb{R}$, defined in \eqref{eq:viscosity_semi}, is a $D$-viscosity solution to the PDE \eqref{eq:viscosity}.
\end{proof}

\begin{proof}[Proof of Corollary \ref{cor:g_exp}]\label{proof:apdx:cor:g_exp}
Since $b(a)\equiv b$ is independent of $a\in A$, also $X_t^{x,a}=X_t^x$ is independent of $a\in A$.\ For the sake of a simplified notation, we therefore suppress the control $a\in A$ for the dynamics $(X_t^x)_{t\in [0,T]}$ throughout this proof.\ We start by proving (i).\ To that end, let $f\in \Cb^2$.\ Then, using It\^o's formula, for every $t>0$ and $x\in\X$,
\begin{align}\label{eq:est00}
    \frac{(T^a(t)f)(x)-f(x)}{t}=\frac{1}{t}\mathbb{E}^{\mathbb P}\Big[f(X_t^x)-f(x)\Big]=\frac{1}{t}\mathbb{E}^{\mathbb P}\bigg[\int_0^t (L f)(X_s^x)\, \d s \bigg],
\end{align}
where
\begin{align}\label{eq:candi_gen}
    (L f)(x):=\frac{1}{2}\Delta f(x)+\big\langle b, \nabla f(x)\big\rangle,
\end{align}
independent of $a\in A$. Since $Lf\in \Cb$ and $X_t^x-x=bt+W_t$ for all $x\in \X$ and $a\in A$, part (i) follows.

We now prove (ii). By part (i), Theorem \ref{thm:semigroup.generator}, and Theorem \ref{cor:viscosity}, the function $v$ given in \eqref{eq:viscosity_g_exp_value} is a $\Cb^2$-viscosity solution to \eqref{eq:viscosity_g_exp}.
Moreover, since $h\in \Cb$ and the underlying process $(X_t^x)_{t\in [0,T]}$ is Markovian, the conditional $g$-expectation value function $V$ given in \eqref{eq:value_g_exp} corresponds to the unique solution $V(t,x)=Y_t^{t,x}$ to the following (Markovian) forward-backward SDE: for every $(t,x)\in [0,T]\times \X$,
\begin{align*}
    \begin{aligned}
        &Y_s^{t,x}=h(X_{T}^{t,x})+\int_t^{T}m\|Z_s^{t,x}\|\,\d s-\int_t^{T}Z_s^{t,x}\, \d W_s,\\
        & X_{s}^{t,x}=
        x+ b(s-t) +(W_s-W_t),\quad \mbox{for $s\in[t,T]$},
    \end{aligned}
\end{align*}
with $(Y^{t,x},Z^{t,x})\in \mathbb{S}^2(\mathbb{R})\times \mathbb{L}^2(\mathbb{R}^d)$, see Definition \ref{dfn:g_exp}.

Therefore, we can apply \cite[Theorem 3.4]{barles1997backward} to ensure that $V(t,x)=Y^{t,x}_t$ is a viscosity solution to the PDE~\eqref{eq:viscosity_g_exp} in the classical sense of \cite{barles1997backward}.\ Then, by the comparison principle for viscosity solutions to a general class of nonlinear PDEs that includes \eqref{eq:viscosity_g_exp}, see, e.g., \cite[Proposition 5.5, Section 5]{neufeld2017nonlinear}, \cite[Theorem 3.5]{barles1997backward}, we conclude that the two functions $V$ and $v$ coincide, as claimed. \end{proof}

	\begin{proof}[Proof of Corollary \ref{cor:SDG}]\label{proof:apdx:cor:SDG} Since $b\colon A\to\X$, and $\sigma\colon A\to \mathbb{R}^{d\times d}$ are  bounded, one can use similar arguments as in the proof of Corollary \ref{cor:g_exp}\;(i) in order to prove part (i) and that, for every $f\in \Cb^2$
	\begin{align*}
		(\mathscr{L}f)(x)&=\inf_{a\in A}\mathscr{L}^af(x)+m\|\nabla f(x)\|\\
		&=\inf_{a\in A}\bigg\{\frac{1}{2}\operatorname{tr}\big(\sigma(a)\sigma(a)^\top \nabla^2 f(x) \big) + \big\langle b(a), \nabla f(x) \big\rangle \bigg\} + m\|\nabla f(x)\|,\quad \text{for all }x\in \X.
	\end{align*}
	For part (ii), we apply Theorem \ref{thm:semigroup.generator} and Theorem \ref{cor:viscosity} with $D:=\Cb^2$ to obtain that the function $v$, given in \eqref{eq:viscosity_SDG_value}, is a $D$-viscosity solution to \eqref{eq:HJBI}.
	Moreover, it follows from \cite[Theorem 2.6]{fleming1989existence} that the robust optimization problem~$V$ in \eqref{eq:value_SDG} is the unique viscosity solution to the following HJBI equation:
	\begin{align}\label{eq:HJBI2}
		\left\{
		\begin{aligned}
			&\partial_tV(t,x)+\inf_{a\in A}\sup_{\|\theta\|\leq m} \big\{\mathscr{L}^{a,\theta}V(t,x) \big\}=0\quad&&\text{for all }(t,x)\in [0,T)\times \X,\\ 
			&V(T,x)=h(x) &&\text{for all }x\in \X,
		\end{aligned}
		\right.
	\end{align}     
	where
	\[
	\mathscr{L}^{a,\theta}V(t,x)=\frac{1}{2}\operatorname{tr}\big(\sigma(a)\sigma(a)^\top \nabla^2 V(t,x) \big) + \big\langle b(a)+\theta, \nabla V(t,x)\big\rangle\quad \text{for }a\in A\text{ and }\|\theta\|\leq m.
	\]
	In particular, since $\sup_{\|\theta\|\leq m} \langle \theta,w \rangle = m \|w\|$ for every $w\in \X$, the HJBI equation in \eqref{eq:HJBI2} coincides with the PDE \eqref{eq:HJBI}.\ Therefore, by the uniqueness result from \cite[Theorem 2.6]{fleming1989existence}, the robust optimization problem\;$V$ coincides with the function $v$ given in \eqref{eq:viscosity_SDG_value}, as claimed.
	\end{proof}

\appendix
\section{Supplementary statements}\label{sec:apdx}
\subsection{Approximation of continuous functions}
We often approximate functions in $\Cb$ (in the mixed topology) with functions with stronger regularity, e.g., functions in $\Lipb$ or $\rm{C_c^2}$. In fact any function $f \in \Cb$ can be approximated in the mixed topology by a sequence of functions in $\Cci$. This follows directly from the fact that  $\Rd$ is a $\sigma$-compact space.
\begin{lemma} \label{lem:approx.res}
    Let $f \in \Cb$.\ Then there exists a sequence $(f_k)_{k \in \N} \subset \Cci$ such that $f_k \tomix f$
    as $k \to \infty$ and $\limsup_{k\to \infty} \|f_k\|_\infty\leq \|f\|_\infty$.
\end{lemma}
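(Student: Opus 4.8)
The plan is to combine mollification, which provides smoothness together with uniform approximation on compacts, with a sequence of smooth cutoffs that enforce compact support without destroying either the uniform bound or the local convergence. Concretely, I would fix a standard mollifier, i.e.\ a function $\rho \in \Cci$ with $\rho \ge 0$, $\operatorname{supp}(\rho) \subset \{y \in \X \,|\, \|y\| \le 1\}$, and $\int_\X \rho(y)\,\d y = 1$, and set $\rho_\delta(y) := \delta^{-d}\rho(y/\delta)$ for $\delta > 0$. Using the $\sigma$-compactness of $\X$, I would also fix cutoffs $\chi_R \in \Cci$ with $0 \le \chi_R \le 1$, $\chi_R \equiv 1$ on $\{x \in \X \,|\,\|x\| \le R\}$ and $\chi_R \equiv 0$ on $\{x \in \X \,|\,\|x\| \ge 2R\}$. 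Then, for sequences $\delta_k \downarrow 0$ and $R_k \uparrow \infty$, I would define $f_k := (f * \rho_{\delta_k}) \cdot \chi_{R_k}$, which lies in $\Cci$, since $f * \rho_{\delta_k}$ is smooth (differentiating under the integral sign, as $\rho_{\delta_k} \in \Cci$ and $f$ is bounded) and $\chi_{R_k}$ is smooth with compact support.

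Next I would verify the two conditions characterizing convergence in the mixed topology, see \eqref{eq:mix_top}. For the uniform bound, since $\|f * \rho_{\delta_k}\|_\infty \le \|f\|_\infty \int_\X \rho_{\delta_k}(y)\,\d y = \|f\|_\infty$ and $0 \le \chi_{R_k} \le 1$, one obtains $\|f_k\|_\infty \le \|f\|_\infty$ for all $k$, whence $\sup_{k \in \N} \|f_k\|_\infty < \infty$. For convergence on compacts, I would fix $K \Subset \X$ and observe that, for all $k$ large enough that $K \subset \{x \in \X \,|\,\|x\| \le R_k\}$, we have $\chi_{R_k} \equiv 1$ on $K$, so that $f_k = f * \rho_{\delta_k}$ on $K$ and therefore $\|f_k - f\|_{\infty, K} = \|f * \rho_{\delta_k} - f\|_{\infty, K}$.

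It then remains to show that $\|f * \rho_\delta - f\|_{\infty, K} \to 0$ as $\delta \downarrow 0$. To this end, I would enlarge $K$ to the compact set $K' := K + \{y \in \X\,|\, \|y\| \le 1\}$, on which $f$ is uniformly continuous with some modulus of continuity $\omega$. For $\delta < 1$ and $x \in K$, writing $(f * \rho_\delta)(x) - f(x) = \int_\X \big(f(x - y) - f(x)\big)\rho_\delta(y)\,\d y$ and using that $\operatorname{supp}(\rho_\delta) \subset \{y \in \X\,|\,\|y\| \le \delta\}$ together with $x - y \in K'$, I would bound the integrand by $\omega(\delta)$, yielding $\|f * \rho_\delta - f\|_{\infty, K} \le \omega(\delta) \to 0$ as $\delta \downarrow 0$. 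Combining this with the previous display gives $\|f_k - f\|_{\infty, K} \to 0$, and together with the uniform bound this establishes $f_k \tomix f$.

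The argument is entirely routine and presents no genuine obstacle; the only point requiring care is the interplay between the two approximations. One must let the cutoff radius $R_k$ grow so that, on any prescribed compact set, the cutoff is eventually identically one, thereby reducing the local estimate to the standard mollification bound, while simultaneously keeping the supremum norm controlled by $\|f\|_\infty$ throughout the construction, so that the $\|\cdot\|_\infty$-boundedness required by the mixed topology is never lost.
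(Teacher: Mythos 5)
Your proof is correct, but it takes a genuinely different route from the paper. Both arguments share the same two-step skeleton---produce a smooth local approximant, then multiply by compactly supported cutoffs $\chi_{R_k}$, and finally check the two conditions in \eqref{eq:mix_top}---but the source of the smooth approximant differs. The paper invokes the Stone--Weierstrass theorem on the balls $K_k=\{x\in\X\,|\,\|x\|\le k\}$ to obtain smooth functions $g_k$ with $\|g_k-f\|_{\infty,K_k}\le \tfrac1k$, and then sets $f_k:=g_k\chi_k$; you instead mollify, setting $f_k:=(f*\rho_{\delta_k})\chi_{R_k}$, and control the local error by the modulus of continuity of $f$ on the enlarged compact set $K'=K+\{\|y\|\le 1\}$. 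Your route is more constructive: the approximant has an explicit integral formula, the local error estimate $\|f*\rho_\delta-f\|_{\infty,K}\le\omega(\delta)$ is quantitative in terms of $\omega$, and the convolution automatically satisfies $\|f*\rho_\delta\|_\infty\le\|f\|_\infty$, so your uniform bound is $\|f\|_\infty$ on the nose (the paper only gets $\|f\|_\infty+1$, since the Stone--Weierstrass approximants $g_k$ are controlled only on $K_k$ and need the cutoff even to be globally bounded). The paper's route, in exchange, dispenses with the convolution machinery and the justification of differentiation under the integral sign, delegating all the analytic work to a single classical density theorem. Both are routine and both are complete; your handling of the interplay between the cutoff radius and the mollification parameter (choosing $k$ large enough that $\chi_{R_k}\equiv 1$ on the prescribed compact, so the estimate reduces to the pure mollification bound) is exactly the point that needed care, and you address it correctly.
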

\begin{proof}
    Let $K_k := \{x\in\X\,|\,\|x\|\leq k\}$ for all $k \in \N_0$.\
    Then, by the Stone-Weierstrass theorem, for every $k \in \N$, there exists a smooth function $g_k\colon \X\to\R$ with $\|g_k - f\|_{\infty, K_k} \leq \frac1k$.\ Moreover, we consider a family of cut-off functions $(\chi_k)_{k\in \N} \subset \Cci$ with $0 \le \chi_k \leq  1$, $\chi_k(x)=1$ for all $x\in K_{k-1}$, and $\chi_k (x) = 0$ for all $x \in \X\setminus K_k$. Then, $f_k:=g_k\chi_k\in \Cci$ with $$\|f_k\|_\infty\leq \|g_k\|_{\infty,K_k}\leq \|f\|_{\infty,K_k}+\frac1k\leq \|f\|_{\infty}+\frac1k$$ for all $k\in \N$ and, for all $\eps>0$ and $K\Subset \X$, there exists some $k_0\in \N$ with $k_0\geq \frac{1}{\eps}$ and $K\subset K_{k_0-1}$, so that
    \[
     \|f_k-f\|_{\infty,K} \leq \|g_k-f\|_{\infty,K_{k-1}}\leq \|g_k-f\|_{\infty,K_k}
     \leq \frac1k\leq \eps\quad \text{for all }k\in \N\text{ with }k\geq k_0.
    \]
 We have therefore shown that $\Cci\ni f_k\tomix f$ as claimed.
\end{proof}
	
\subsection{A priori estimates on the linear semigroups}
In this section, we report an a priori estimate for the linear semigroups $(T^a(t))_{t \ge 0}$, which is central for the proof of Lemma \ref{lem:J.cont.above}.
\begin{lemma}\label{lem:apriori.linsg}
    Suppose that Assumption \ref{ass:general} holds.\ Then, for every $\eps > 0$, there exists a constant $ C'_\eps \geq 0$ such that, for all $f\in {\rm C_c^2}$,
    \[
    \limsup_{t\downarrow 0}\sup_{a \in A} \sup_{x \in \Rd} \bigg| \frac{(T^a(t)f)(x) - f(x)}{t}\bigg| \le \eps \|f\|_\infty + C_\eps' \bigg(\sup_{y \in \X}\big(1+\|y\|\big)\|\nabla f(y)\| + \|\nabla^2 f\|_\infty \bigg).
    \]
\end{lemma}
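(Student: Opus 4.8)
The plan is to expand $(T^a(t)f)(x) - f(x) = \int_\X \big[f(\psi_t^a(x) + y) - f(x)\big]\,\mu_t^a(\d y)$ and, writing $w := \psi_t^a(x)$, to split it into a drift part $f(w) - f(x)$, a near-field integral over $\{\|y\| \le M\}$, and a far-field integral over $\{\|y\| > M\}$, where $M$ is fixed at the outset from Assumption \ref{ass:general} (iv). Concretely, given $\eps > 0$, I would first apply Assumption \ref{ass:general} (iv) with threshold $\eps/2$ to obtain $M > 0$ with $\sup_{a \in A} \sup_{t > 0} t^{-1}\mu_t^a(\{\|y\| > M\}) < \eps/2$. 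Since $\mu_t^a$ is a probability measure, the far-field integrand $f(w+y) - f(w)$ is bounded by $2\|f\|_\infty$, so $t^{-1}\big|\int_{\{\|y\|>M\}}(f(w+y)-f(w))\,\mu_t^a(\d y)\big| \le 2\|f\|_\infty\, t^{-1}\mu_t^a(\{\|y\|>M\}) < \eps\|f\|_\infty$, which supplies exactly the $\eps\|f\|_\infty$ term.

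For the drift part I would use $f(w) - f(x) = \int_0^1 \nabla f(x + s(w - x))^\top (w - x)\,\d s$ together with the linear-growth bound $\|w - x\| = \|\psi_t^a(x) - x\| \le Ct(1 + \|x\|)$ from Assumption \ref{ass:general} (i). The point requiring care is that dividing by $t$ leaves a factor $1 + \|x\|$ that is not bounded uniformly in $x$; this is precisely why the \emph{weighted} gradient $\sup_y (1+\|y\|)\|\nabla f(y)\|$ (finite because $\nabla f$ has compact support) appears in the bound. For $t$ small enough that $Ct \le 1/2$, the estimate $\|z - x\| \le Ct(1+\|x\|)$ at $z = x + s(w-x)$ forces $1 + \|x\| \le 2(1 + \|z\|)$, so wherever $\nabla f(z) \ne 0$ the factor $1+\|x\|$ is absorbed, giving $t^{-1}|f(w) - f(x)| \le 2C \sup_y (1+\|y\|)\|\nabla f(y)\|$.

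The near-field term is treated by a second-order Taylor expansion $f(w + y) - f(w) = \nabla f(w)^\top y + \tfrac12 y^\top \nabla^2 f(\xi) y$: the Hessian remainder is bounded by $\tfrac12 \|\nabla^2 f\|_\infty\, t^{-1}\int_{\{\|y\|\le M\}} \|y\|^2\,\mu_t^a(\d y)$, the linear part by $\|\nabla f(w)\| \cdot \big\|t^{-1}\int_{\{\|y\|\le M\}} y\,\mu_t^a(\d y)\big\|$, and I would then use $\|\nabla f(w)\| \le \sup_y (1+\|y\|)\|\nabla f(y)\|$. The main obstacle — and the place where Assumptions (iii) and (iv) must be combined — is showing that the truncated moments $t^{-1}\int_{\{\|y\|\le M\}} \|y\|^2\,\mu_t^a(\d y)$ and $t^{-1}\int_{\{\|y\|\le M\}} y\,\mu_t^a(\d y)$ stay bounded uniformly in $a$ as $t \downarrow 0$, since Assumption \ref{ass:general} (iii) controls $\mu_t^a$ only against $\Cci$ test functions, not directly through truncated moments. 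I would resolve this by choosing cut-off functions $\chi \in \Cci$ that coincide with $\|y\|^2$ (respectively with the coordinate $y_i$) on $\{\|y\| \le M\}$ and are supported in a slightly larger ball, with $\chi(0)=0$: then $t^{-1}\int_{\{\|y\|\le M\}}\|y\|^2\,\mu_t^a(\d y) \le t^{-1}\int_\X (\chi(y) - \chi(0))\,\mu_t^a(\d y)$ is bounded by (iii), while for the coordinate moments one writes $t^{-1}\int_{\{\|y\|\le M\}} y_i\,\mu_t^a(\d y) = t^{-1}\int_\X (\chi_i - \chi_i(0))\,\mu_t^a(\d y) - t^{-1}\int_{\{\|y\|>M\}}\chi_i\,\mu_t^a(\d y)$ and bounds the first summand by (iii) and the second by the tail estimate (iv) using $\|\chi_i\|_\infty < \infty$. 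Finally I would collect the three contributions, taking $t$ small enough for the finitely many $\limsup$ conditions from (iii) and the condition $Ct \le 1/2$ to hold simultaneously, and absorb the resulting ($\eps$-dependent) constants into a single $C = C(\eps) \ge 0$, which completes the proof.
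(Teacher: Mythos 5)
Your proof is correct, and its overall skeleton (far field via Assumption \ref{ass:general} (iv), drift via Assumption \ref{ass:general} (i), near-field Taylor expansion with truncated first and second moments controlled through $\Cci$ cut-offs and Assumptions \ref{ass:general} (iii)--(iv)) matches the paper's. The genuine difference lies in the decomposition and in how the unbounded factor $1+\|x\|$ coming from the drift is tamed. The paper splits $f(\psi_t^a(x)+y)-f(x)$ as $[f(\psi_t^a(x)+y)-f(x+y)]+[f(x+y)-f(x)]$ and exploits compact support: an auxiliary stability estimate (Lemma \ref{lem:psi.stab}) shows that for $\|x\|>R$ and $t$ small the integrand vanishes identically, so that $1+\|x\|$ may be replaced by $1+R$; it then also invokes the uniform $p$-moment convergence $\mu_t^a\to\delta_0$ from Assumption \ref{ass:general} (iv) to turn $\int\|\nabla f(x+y)\|\,\mu_t^a(\d y)$ into $\|\nabla f(x)\|$, at the cost of a further $\tfrac{\eps}{2}\|f\|_\infty$. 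You instead center everything at $w=\psi_t^a(x)$ and absorb $1+\|x\|$ pointwise along the Taylor segment: when $Ct\le 1/2$, every point $z$ on the segment from $x$ to $w$ satisfies $1+\|x\|\le 2(1+\|z\|)$, so the factor is swallowed by the weighted quantity $\sup_y(1+\|y\|)\|\nabla f(y)\|$ that appears in the statement anyway. This makes your argument shorter and self-contained --- no analogue of Lemma \ref{lem:psi.stab}, no use of the moment-convergence part of Assumption \ref{ass:general} (iv), and no second-order remainder in the drift expansion --- and it in fact establishes the estimate for every $f\in\Cb^2$ with $\sup_y(1+\|y\|)\|\nabla f(y)\|<\infty$, not only for compactly supported $f$. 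The two treatments of the truncated moments are interchangeable: you extend $y_i$ and $\|y\|^2$ by cut-offs vanishing at $0$ and use the tail bound (iv) for the correction term, whereas the paper multiplies by a cut-off equal to $1$ at the origin and bounds the correction again by Assumption \ref{ass:general} (iii); both reduce to the same application of (iii) to finitely many test functions, with the constant (and the smallness threshold for $t$) depending on $\eps$ through $M$, exactly as the statement permits.
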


Before stating the proof of Lemma \ref{lem:apriori.linsg}, we need a simple estimate on the deterministic drift of the controlled dynamics. The following lemma enables us to keep $\psi_t^a(x)$ outside any arbitrarily large neighborhood of~$0$, uniformly in $a \in A$, by taking $x$ out of a larger neighborhood and choosing $t$ small enough.\ The proof is a straightforward modification of \cite[Lemma 5.4 c)]{nendel2024separable} to our setup.

\begin{lemma}\label{lem:psi.stab}
    Suppose that Assumption \ref{ass:general}\;(i),\;(ii) hold. Then, for every $R' > R \ge 0$,
    there exists $t_0 > 0$ such that, for every $x \in \X$ with $\|x\| \ge R'$ and every $t \in [0, t_0]$,
    \begin{align*}
        \inf_{a \in A}\|\psi_t^a(x)\| \ge R.
    \end{align*}
\end{lemma}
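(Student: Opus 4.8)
The plan is to rely solely on the sublinear displacement bound in Assumption \ref{ass:general}\;(i), which controls how far $\psi_t^a(x)$ can move away from $x$ in short time, uniformly in $a \in A$. Concretely, Assumption \ref{ass:general}\;(i) yields
\[
\|\psi_t^a(x) - x\| \le Ct(1 + \|x\|) \quad \text{for all } t > 0,\ a \in A,\ x \in \X,
\]
with the constant $C > 0$ from the assumption. I expect the Lipschitz bound in (ii) to play no role here, so the argument will use only (i).

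First I would apply the reverse triangle inequality to obtain, uniformly in $a \in A$,
\[
\|\psi_t^a(x)\| \ge \|x\| - \|\psi_t^a(x) - x\| \ge \|x\| - Ct(1 + \|x\|) = \|x\|(1 - Ct) - Ct.
\]
The key observation is that, as soon as $Ct < 1$, the right-hand side is nondecreasing in $\|x\|$, so for every $x$ with $\|x\| \ge R'$ it is bounded below by its value at $\|x\| = R'$, namely
\[
\|\psi_t^a(x)\| \ge R'(1 - Ct) - Ct = R' - Ct(R' + 1).
\]
This is precisely the point that dispels the only apparent difficulty, namely the worry that large $\|x\|$ could spoil the bound: although the displacement estimate grows linearly in $\|x\|$, the coefficient $(1 - Ct)$ multiplying $\|x\|$ remains positive, so the minimum of the lower bound over the exterior region $\{\|x\| \ge R'\}$ is attained on its boundary sphere.

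It then remains to choose $t_0$ making the last expression at least $R$. Taking
\[
t_0 := \frac{R' - R}{C(R' + 1)},
\]
one checks that $t_0 \in (0, 1/C)$ since $R' - R < R' + 1$, so that $1 - Ct > 0$ for all $t \in [0, t_0]$; moreover $Ct(R' + 1) \le R' - R$ for such $t$, whence $R' - Ct(R' + 1) \ge R$. Taking the infimum over $a \in A$ then gives $\inf_{a \in A}\|\psi_t^a(x)\| \ge R$ for all $x$ with $\|x\| \ge R'$ and all $t \in [0, t_0]$, as claimed. There is no genuine obstacle in the argument; the only care required is to verify the positivity $1 - Ct > 0$ and the monotonicity in $\|x\|$, both of which are guaranteed by the explicit choice of $t_0$.
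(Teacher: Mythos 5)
Your proof is correct and follows essentially the same route as the paper's: the displacement bound from Assumption \ref{ass:general}\;(i), the reverse triangle inequality, and the identical choice $t_0 = \frac{R'-R}{C(1+R')}$. The only cosmetic difference is that you argue via monotonicity of $\|x\|(1-Ct)-Ct$ in $\|x\|$, while the paper factors the same lower bound as $(1+\|x\|)(1-t_0C)-1$; both exploit exactly the same positivity of the coefficient of $\|x\|$.
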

    \begin{proof}
    Let $R' > R \ge 0$. Then, we set $t_0 := \frac{R' - R}{C(1 + R')}$ with $C > 0$ appearing in Assumption \ref{ass:general}\;(i). Then, for every $x \in \X$ with $\|x\|\geq R'$ and every $t \in [0,t_0]$,
    \[
    \sup_{a \in A} \|\psi_t^a(x) - x\| \le t_0 C (1 + \|x\|).
    \]
    Hence, by the inverse triangle inequality together with the fact that $R'>R$, we conclude that, for all $x \in \X$ with $\|x\|\geq R'$ and all $t \in [0,t_0]$,
    \begin{align*}
        \inf_{a\in{A}}\|\psi_t^a(x)\|  \ge \|x\| - \sup_{a\in {A}}\|\psi_t^a(x) - x\| &\ge \|x\| -  t_0 C (1 + \|x\|) = (1 + \|x\|) (1 -  t_0 C) - 1\\
        & = \frac{1 + \|x\|}{1 + R'}(1 + R) - 1 \ge R,
    \end{align*}
    as claimed.
\end{proof}

\begin{proof}[Proof of Lemma \ref{lem:apriori.linsg}]
	Let $\eps>0$.\ Then, by Assumption \ref{ass:general}\,(iv), there exists some constant $M > 0$ such that, for every $t > 0$,
	\[
	\sup_{a \in A} \frac{\mu_t^a\big(\{ y \in \X \,|\, \|y\| > M \}\big)}t \leq  \frac{\eps}{2}.
	\]
    Then, for all $t > 0$, $a \in A$, and $f\in \Cb$,
	\begin{align} \label{eq:proof.ref.dyn.1}
		\bigg| \frac{T^a(t)f(x) - f(x)}{t}\bigg| \le \bigg| \int_{\|y\| \le M} \frac{f(\psi_t^a(x) + y) - f(x)}{t}\,\mu_t^a(\d y) \bigg| + {\eps} \|f\|_\infty.
	\end{align}
    Now, let $f\in {\rm C_c^2}$ and $r\geq 0$ such that $\operatorname{supp}(f)\subset \{x\in \X\,|\,\|x\|\leq r\}$.\ By Lemma \ref{lem:psi.stab}, there exists $R > r$ such that $\|\psi_t^a(x) + y\| > r$ for all $x,y \in \X$ with $\|x\| > R$ and $\|y\| \le M$, $a \in A$, and $t > 0$ sufficiently small.\ Hence, 
    \begin{align} \label{eq:proof.ref.dyn.12}
		f(\psi_t^a(x) + y) - f(x) = 0
	\end{align}
    for all $x,y \in \X$ with $\|x\| > R$ and $\|y\| \le M$, $a \in A$, and $t > 0$ sufficiently small, so that it suffices to estimate the first summand on the right-hand side of \eqref{eq:proof.ref.dyn.1} for $a \in A$, $t> 0$ sufficiently small, and $x\in \R^d$ with $\|x\| \le R$.

    Let $a \in A$, $t> 0$ sufficiently small such that \eqref{eq:proof.ref.dyn.12} holds, and $x\in \R^d$ with $\|x\| \le R$.\ We write
    \begin{align}
    \int_{\|y\| \le M} \frac{f( \psi_t^a(x) + y) - f(x)}{t}\,\mu_t^a(\d y) & = \int_{\|y\| \le M} \frac{f( \psi_t^a(x) + y ) - f(x + y)}{t}\,\mu_t^a(\d y) \notag \\
    & \qquad + \int_{\|y\| \le M} \frac{f(x + y) - f(x)}{t}\,\mu_t^a(\d y), \label{eq:proof.ref.dyn.first}
    \end{align}
	and tackle the two terms separately.\ For the first term, applying a Taylor expansion, we get
    \begin{align}
    \int_{\|y\| \le M} & \frac{f( \psi_t^a(x) + y ) - f(x + y)}{t}\,\mu_t^a(\d y) = \int_{\|y\| \le M} \frac{1}{t} \big\langle \nabla f(x + y), \psi_t^a(x) - x \big\rangle \,\mu_t^a(\d y) \notag \\
    & \qquad + \int_{\|y\| \le M} \int_0^1 \frac{1-s}{t} \big\langle \psi_t^a(x) - x, \nabla^2 f\big(x + y + s (\psi_t^a(x) - x) \big) (\psi_t^a(x) - x) \big\rangle\,\d s \, \mu_t^a(\d y) \notag \\
    & \le \int_{\|y\| \le M} \|\nabla f(x + y)\| \frac{\|\psi_t^a(x) - x\|}{t} \,\mu_t^a(\d y) + \big\|\nabla^2 f \big\|_\infty \frac{\|\psi_t^a(x) - x\|^2}{t} \notag \\
    & \le C (1 + \|x\|) \int_{\|y\| \le M} \|\nabla f(x + y)\|\,\mu_t^a(\d y) + t C^2 (1 + R)^2 \big\|\nabla^2 f \big\|_\infty, \label{eq:bound.f(psi + y)}
    \end{align}
    where $C > 0$ is the constant appearing in Assumption \ref{ass:general}~(i).\
    Since $\mu_t^a$ converges to $\delta_0$ in the Wasserstein-$p$ topology, uniformly over $a \in A$, as $t\to 0$ and $f$ has bounded second derivatives, we obtain
    \[
    C (1 + \|x\|) \int_{\|y\| \le M} \|\nabla f(x + y)\|\,\mu_t^a(\d y) \to C (1 + \|x\|) \|\nabla f(x)\| \quad \text{as }t \to 0
    \]
    uniformly over $a\in A$ and $x \in \Rd$ with $\|x\| \le R$. For $t > 0$ small enough, we can therefore bound \eqref{eq:bound.f(psi + y)} by
    \begin{align}
        \int_{\|y\| \le M} \frac{f( \psi_t^a(x) + y ) - f(x + y)}{t}\,\mu_t^a(\d y)
        & \le C \big((1 + \|x\|) \|\nabla f(x)\| + \big\|\nabla^2 f \big\|_\infty \big). \label{eq:est.I}
    \end{align}
    Note that, in the previous inequality, we have also chosen $t$ small enough so that $t C (1 + R)^2 < 1$.\ For the second term in \eqref{eq:proof.ref.dyn.first}, we use a Taylor expansion for $f$ around $x$ to get
	\begin{align*}
		\int_{\|y\| \le M} \frac{f(x + y) - f(x)}{t}\,\mu_t^a(\d y) & = \frac{1}{t} \int_{\|y\| \le M} \langle \nabla f(x), y \rangle \,\mu_t^a(\d y) \\
        & \qquad + \frac{1}{t} \int_{\|y\| \le M} \int_0^1 (1-s) \langle \nabla^2 f(x+ s y)\,y, y \rangle\,\d s\,\mu_t^a(\d y).
	\end{align*}
	Again we tackle the two terms separately.
	First, let $\chi \in \Cci$ such that $0 \le \chi \le 1$, $\chi(0) = 1$, and $\chi(y) = 0$ for $\|y\|> M$. Note that, for all $i\in \{1,\ldots,d\}$, the function $x \mapsto x_i\chi(x)$ is also in $\Cci$.\ We then have some $C_2 \geq 0$ such that
	\begin{align}
			\frac{1}{t} \int_{\|y\| \le M} \langle \nabla f(x), y \rangle \,\mu_t^a(\d y) & \le \|\nabla f\|_\infty \bigg\| \int_{\|y\| \le M} \frac{y \chi(y) + y(1 - \chi(y))}{t}\,\mu_t^a(\d y) \bigg\| \notag \\
			& \le \|\nabla f\|_\infty \bigg( \bigg\| \int_\X \frac{y\chi(y)}{t} \,\mu_t^a(\d y) \bigg\| + M \int_\X \frac{\chi(0) - \chi(y)}{t}\,\mu_t^a(\d y) \bigg) \notag \\
            & \le C_2 \| \nabla f\|_\infty, \label{eq:est.II.1}
	\end{align}
	where the last inequality follows from  Assumption \ref{ass:general}~(iii).\ Moreover, since the function $x \mapsto \|x\|^2 \chi(x)$ is in $\Cci$ as well, 
	we can use the same arguments presented for~\eqref{eq:est.II.1} to have some $C_3 > 0$ satisfying
	\begin{align}
			\frac{1}{t} \int_{\|y\| \le M} \int_0^1 (1-s) & \langle \nabla^2 f(x+ s y)\,y, y \rangle\,\d s\,\mu_t^a(\d y) \notag \\
            & \le \|\nabla^2 f\|_\infty  \int_{\|y\| \le M} \frac{\|y\|^2 \chi(y)+\|y\|^2(1 - \chi(y))}{t}\,\mu_t^a(\d y) \notag \\
			& \le \|\nabla^2 f\|_\infty \bigg( \int_\X \frac{\|y\|^2 \chi(y)}{t}\,\mu_t^a(\d y) + M^2 \int_\X \frac{\chi(0) - \chi(y)}{t}\,\mu_t^a(\d y) \bigg) \notag \\
            & \le C_3 \|\nabla^2 f\|_\infty. \label{eq:est.II.2}
	\end{align}	
	Combining \eqref{eq:est.I}, \eqref{eq:est.II.1}, and \eqref{eq:est.II.2}, there exists a constant $C_\eps' \geq 0$ such that
	\begin{align*}
		\int_{\|y\| \le M} \frac{f( \psi_t^a(x) + y) - f(x)}{t}\,\mu_t^a(\d y) \le C_\eps'\bigg(\sup_{y \in \X}(1 + \|y\|) \|\nabla f(y)\| + \|\nabla^2 f\|_\infty \bigg) + \eps\|f\|_\infty
	\end{align*}
    for all $t > 0$ small enough.\ The proof is complete.
\end{proof}

\subsection{Continuity in the mixed topology for sublinear monotone operators}\label{sec:apdx:conti_above}

The following lemma is a particular version of \cite[Corollary C.4]{BKN23} tailored to subadditive, positive homogeneous, and monotone operators $\Cb\to \mathrm{F}$, where $\mathrm{F}$ denotes the space of all bounded functions $\Rd \to \R$.\
It provides explicit estimates that allow to verify equicontinuity in the mixed topology for families of such operators by means of suitable cut-off functions.

\begin{lemma} \label{lem:crit_above}
    Let $\mathfrak{I}\colon \Cb\to \mathrm{F}$ be a subadditive, positive homogeneous, and monotone operator, $\eps > 0$, and $K \Subset \Rd$.\ Moreover, assume that there exists a function $\zeta \in \Cb$ and $K' \Subset \Rd$ with
    \begin{enumerate}
        \item[(i)] $0 \le \zeta(x) \le 1$ for all $x \in \Rd$,
        \item[(ii)] $\zeta(x)=0$ for all $x \in \Rd\setminus K'$,
        \item[(iii)] $\big(\mathfrak{I}(1-\zeta)\big)(x) \le \eps$ for all $x \in K$.
    \end{enumerate}
    Then, for all $f,g \in \Cb$,
    \[
    \|\mathfrak{I} f - \mathfrak{I} g \|_{\infty, K} \le \|\mathfrak{I}1\|_\infty \|f - g\|_{\infty, K'} + \eps \|f-g\|_\infty.
    \]
\end{lemma}
\begin{proof}
 Let  $f,g \in \Cb$ and $x\in K$.\ Then, using the fact that $\mathfrak{I}$ is subadditive, positive homogeneous, and monotone, and since $\zeta$ is supported on $K'$ with $0\leq \zeta\leq 1$, 
 \begin{align*}
  \big|(\mathfrak{I} f )(x)- (\mathfrak{I} g)(x)\big|&\leq (\mathfrak{I} |f-g| )(x)\leq \big(\mathfrak{I} (|f-g| \zeta)\big)(x)+\Big(\mathfrak{I} \big(|f-g|(1-\zeta) \big)\Big)(x)\\
  &\leq \|f-g\|_{\infty,K'}(\mathfrak{I}\zeta)(x)+\|f-g\|_\infty \big(\mathfrak{I}(1-\zeta)\big)(x)\\
  &\leq \|\mathfrak{I}1\|_\infty \|f - g\|_{\infty, K'} + \eps \|f-g\|_\infty.
 \end{align*}
\end{proof}

We end this section with a technical result borrowed from \cite{nendel2024separable}, which provides the existence of suitable cut-off functions.

\begin{lemma} \label{lem:cut-off-existence}
    For all $\eps > 0$ and $n \in \N$, there exists a function $\zeta \in \Cci$ with $0 \le \zeta \le 1$, $\zeta(x) = 1$ for all $x \in \Rd$ with $\|x\| \le n$, and
    \begin{equation} \label{eq:cut-off}
        \sup_{x \in \Rd} \big( (1 + \|x\|)\|\nabla \zeta(x)\| + \|\nabla^2 \zeta(x)\| \big) \le \eps.
    \end{equation}
\end{lemma}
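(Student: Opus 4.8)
The plan is to construct $\zeta$ as a radial function whose transition from $1$ to $0$ takes place over an \emph{exponentially wide} range of radii, which is exactly what is needed to make the $(1+\|x\|)$-weighted gradient small. Fix once and for all a smooth, nonincreasing step function $\eta \colon \R \to [0,1]$ with $\eta(s) = 1$ for $s \le 0$ and $\eta(s) = 0$ for $s \ge 1$; such an $\eta$ exists by the usual mollification construction, and all its derivatives vanish at $s=0$ and $s=1$, so it joins its constant pieces smoothly. Write $b_1 := \|\eta'\|_\infty$ and $b_2 := \|\eta''\|_\infty$.

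Set $a := n$ (so $a \ge 1$, since $n \in \N$) and, for a parameter $M>0$ to be fixed at the end, let $b := a\,e^M$. Define the radial profile $\phi \colon [0,\infty) \to [0,1]$ by $\phi(r) := 1$ for $r \le a$ and $\phi(r) := \eta\big(\tfrac{1}{M}\ln(r/a)\big)$ for $r \ge a$, and put $\zeta(x) := \phi(\|x\|)$. By construction $0 \le \zeta \le 1$, $\zeta \equiv 1$ on $\{\|x\| \le n\}$, and $\zeta \equiv 0$ on $\{\|x\| \ge b\}$, so $\zeta$ is compactly supported. I would first check $\zeta \in \Cci$: away from the origin $\|x\|$ is smooth and $\phi$ is a composition of smooth maps, while near the origin and past radius $b$ the function is locally constant; smoothness across the junctions $\|x\| = a$ and $\|x\| = b$ follows from the vanishing of all derivatives of $\eta$ at $0$ and $1$.

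The heart of the argument is the derivative estimate, for which only the transition region $a \le r = \|x\| \le b$ matters (elsewhere $\phi' = \phi'' = 0$). Writing $u(r) := \tfrac{1}{M}\ln(r/a)$, one has $u'(r) = \tfrac{1}{Mr}$ and $u''(r) = -\tfrac{1}{Mr^2}$, and a direct computation for a radial function gives $\|\nabla \zeta(x)\| = |\phi'(r)|$, while the Hessian has eigenvalues $\phi''(r)$ (radial direction) and $\phi'(r)/r$ (tangential directions), so its operator norm equals $\max\big(|\phi''(r)|,\,|\phi'(r)|/r\big)$. Using $\phi'(r) = \eta'(u)\,u'(r)$ and $\phi''(r) = \eta''(u)\,u'(r)^2 + \eta'(u)\,u''(r)$ together with $r \ge a \ge 1$, I would bound $(1+r)\,|\phi'(r)| \le 2b_1/M$ and $\|\nabla^2 \zeta(x)\| \le b_2/M^2 + b_1/M$, so that the left-hand side of \eqref{eq:cut-off} is at most $3b_1/M + b_2/M^2$.

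Finally, choosing $M$ large enough that $3b_1/M + b_2/M^2 \le \eps$ completes the construction. The only genuinely delicate point is the weighting of the gradient by $(1+\|x\|)$: a fixed-width transition would force $\|\nabla \zeta\| \gtrsim 1/(b-a)$ on a shell of radius $\sim b$, making $(1+\|x\|)\,\|\nabla\zeta\|$ of order one no matter how $b$ is chosen; the logarithmic change of variables — equivalently, the exponentially wide transition $b = a\,e^M$ — is precisely what absorbs the factor $r$ and lets all three quantities decay like $1/M$.
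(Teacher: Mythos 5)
Your construction is correct: the radial profile $\zeta(x)=\eta\big(\tfrac1M\ln(\|x\|/n)\big)$ with an exponentially wide transition annulus $n\le\|x\|\le n e^M$ does exactly what is needed, and your computations check out — smoothness across the junctions follows since all derivatives of $\eta$ vanish at $0$ and $1$, the radial Hessian formula $\|\nabla^2\zeta(x)\|=\max\big(|\phi''(r)|,|\phi'(r)|/r\big)$ is right, and the bounds $(1+r)|\phi'(r)|\le 2b_1/M$ and $|\phi''(r)|\vee|\phi'(r)|/r\le b_2/M^2+b_1/M$ use only $r\ge n\ge 1$, so taking $M$ large gives \eqref{eq:cut-off}. (The only cosmetic caveat: the lemma does not specify which matrix norm $\|\nabla^2\zeta(x)\|$ denotes, but any fixed norm only changes your bound by a dimensional constant, absorbed by enlarging $M$.) However, your route differs from the paper's: the paper does not construct anything, but simply observes that the weight $x\mapsto 1+\|x\|$ satisfies the hypotheses of an external result, Lemma A.1 of \cite{nendel2024separable}, and invokes it; that lemma produces weighted cut-off functions for a general class of weight functions $\kappa$, of which $1+\|x\|$ is a special case. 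Your proof is self-contained and makes explicit the key mechanism — the logarithmic change of variables that absorbs the factor $\|x\|$ into the transition width — which is precisely the phenomenon the cited lemma encapsulates abstractly; what the paper's approach buys is brevity and the generality of arbitrary admissible weights, what yours buys is that the reader need not consult another preprint to see why such a $\zeta$ exists.
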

\begin{proof}
    It is enough to note that the function $x \mapsto 1 + \|x\|$ satisfies the hypothesis of \cite[Lemma A.1]{nendel2024separable}.\ The result is then a particular version of the aforementioned lemma.
\end{proof}

\bibliographystyle{abbrv}

\end{document}